\documentclass[11pt,a4paper]{article}
\usepackage[T1]{fontenc}
\usepackage[utf8]{inputenc}
\usepackage{lmodern,amsmath,amssymb,amsthm,mathtools}
\usepackage[margin=20mm]{geometry}
\usepackage{enumitem,booktabs,array,tikz,needspace,float}
\usetikzlibrary{arrows.meta,positioning}
\usepackage[numbers,sort&compress]{natbib}
\usepackage[colorlinks=true,linkcolor=blue,citecolor=blue,urlcolor=blue]{hyperref}
\hypersetup{pdftitle={Linear circumference in vertex-transitive graphs}}
\newtheorem{theorem}{Theorem}[section]
\newtheorem{lemma}[theorem]{Lemma}
\newtheorem{proposition}[theorem]{Proposition}

\theoremstyle{definition}

\theoremstyle{remark}

\newtheoremstyle{claimstyle}{\medskipamount}{\medskipamount}
  {\normalfont}{}{\bfseries}{.}{.5em}{}
\theoremstyle{claimstyle}
\newtheorem{claim}{Claim}

\newcommand{\Aut}{\operatorname{Aut}}
\setlist{itemsep=2pt,topsep=4pt}
\allowdisplaybreaks[2]
\begin{document}
\title{Linear circumference in vertex-transitive graphs}
\author{Jie Ma\thanks{School of Mathematical Sciences, University of Science and Technology of China, Hefei, Anhui 230026, China, and Yau Mathematical Sciences Center, Tsinghua University, Beijing 100084, China.
Research supported by National Key Research and Development Program of China 2023YFA1010201 and National Natural Science Foundation of China grant 12125106. Email: {\tt jiema@ustc.edu.cn}.}
\and Ziyuan Zhao\thanks{School of Mathematical Sciences, University of Science and Technology of China, Hefei, Anhui 230026, China. Research supported by Innovation Program for Quantum Science and Technology 2021ZD0302902. Email: {\tt zyzhao2024@mail.ustc.edu.cn}.}
}

\date{}
\maketitle
\begin{abstract}
We prove that there is an absolute constant $c>0$ such that every connected vertex-transitive graph $G$ on $n \ge 3$ vertices contains a cycle of length at least $cn$.
Moreover, every such graph with sufficiently large degree $d$ contains a cycle of length at least $(1-d^{-1/100})n$.
This gives the first linear bound towards Lov\'asz's Hamiltonicity conjecture.
The proof combines a structural result of DeVos and Mohar on vertex-transitive graphs with a general framework for finding long cycles, which may be of independent interest.
\end{abstract}

\section{Introduction}\label{sec:introduction}
A graph is \emph{vertex-transitive} if its automorphism group acts transitively on its vertices. In 1969, Lov\'asz~\cite{Lovasz} asked whether every connected vertex-transitive graph has a Hamilton path, a problem now known as the Lov\'asz Hamiltonicity conjecture. 
A stronger conjecture, commonly attributed to Thomassen~\cite{GLSTY}, states that every sufficiently large connected vertex-transitive graph has a Hamilton cycle.
The special case of Cayley graphs has an even earlier history, with work of Rankin~\cite{Rankin} and Rapaport--Strasser~\cite{Rapaport} predating these conjectures.

There has been extensive research on improving the general lower bound for the length of longest cycles in connected vertex-transitive graphs.
In an influential paper,
Babai~\cite{Babai} proved that every connected vertex-transitive graph on $n\ge3$ vertices has a cycle of length $\Omega(\sqrt n)$.
This remained the best general bound for more than forty years, until DeVos~\cite{DeVos} improved it in 2023 to $\Omega(n^{3/5})$, revealing an interesting relation between the length and transversal of longest cycles.
Since then, the problem has attracted considerable attention.
Groenland, Longbrake, Steiner, Turcotte and Yepremyan~\cite{GLSTY} first improved the bound to $\Omega(n^{13/21})$, and Norin, Steiner, Thomass\'e and Wollan~\cite{NSTW} subsequently obtained $\Omega(n^{9/14})$.
Buci\'c, Christoph, Pokrovskiy and Steiner~\cite{BCPS} then proved a bound of $n^{2/3-o(1)}$.
In a subsequent working draft (available at~\cite{BCPSAsymptotic}), they improved this to $n^{1-o(1)}$, a bound that was also proved by Li and Methuku~\cite{LiMethuku}.
Several of these works rely on estimates for the intersections and transversals of longest paths and cycles; see~\cite{GLSTY,NSTW,MaZhao,BCPS}.

Hamiltonicity and almost Hamiltonicity are known under additional degree assumptions. 
Christofides, Hladk\'y and M\'ath\'e~\cite{CHM} proved that for every $\varepsilon>0$, every sufficiently large connected vertex-transitive graph on $n$ vertices with degree at least $\varepsilon n$ has a Hamiltonian cycle.
For Cayley graphs, Bedert, Dragani\'c, M\"uyesser and Pavez-Sign\'e~\cite{BDMP} weakened
the degree assumption: for some absolute $c>0$, every sufficiently large connected
Cayley graph of degree at least $n^{1-c}$ has a Hamiltonian cycle.
Very recently, Christoph, Hunter and Sudakov~\cite{CHS} proved that every connected $d$-regular vertex-transitive graph with $d=\omega(\log^3 n\log\log n)$ contains a cycle of length $(1-o(1))n$.

In this paper, we prove the first linear bound towards the Lov\'asz Hamiltonicity conjecture. 

\begin{theorem}\label{thm:main}
There exist an absolute constant $c>0$ and an integer $d_0$
such that every finite connected vertex-transitive graph on
$n\ge3$ vertices contains a cycle of length at least $cn$.
Moreover, if its degree $d$ is at least $d_0$, then it contains
a cycle of length at least $(1-d^{-1/100})n$.
\end{theorem}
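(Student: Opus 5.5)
We plan to combine the structural theorem of DeVos and Mohar with a general ``long cycle framework''. The DeVos--Mohar result says, roughly, that a connected vertex-transitive graph $G$ of bounded degree either has small ``local'' structure, or it is quasi-isometric to a Cayley graph of a group with controlled growth. Concretely, the finite vertex-transitive graphs split into two regimes. In the first, $G$ is an expander-like object: small sets have vertex boundary proportional to their size up to a scale. In the second, $G$ has a normal-subgroup quotient structure: there is a partition of $V(G)$ into blocks of bounded size, invariant under a transitive subgroup of $\Aut(G)$, whose quotient is a cycle or a bounded-dimensional ``torus-like'' graph of polynomial growth.

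The general framework we would set up is the following. Suppose $V(G)$ admits a partition into connected pieces $B_1,\dots,B_m$, and suppose the pieces are linked through an auxiliary graph $H$ with the following properties. Each piece has a path system covering a constant fraction of it, with prescribed endpoints. Whenever two pieces are adjacent in $H$, there are many disjoint edges between them. Then a Hamilton cycle, or a long cycle, in $H$ can be lifted to a cycle in $G$ covering a constant fraction of $\bigcup B_i$. Vertex-transitivity gives regularity of the block structure. Hence the lifting loses only a constant factor, or a factor $1-d^{-1/100}$ once $d$ is large, since each block then admits near-spanning paths between any prescribed ends by a Pósa rotation--extension argument.

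The case analysis is as follows.
\begin{enumerate}
\item In the polynomial-growth regime, we use the quotient of $G$ by the block partition. This quotient is a vertex-transitive graph of small dimension, essentially a Cayley graph of a virtually abelian group. For such graphs Hamiltonicity, or at least linear cycles, is known by torus/grid arguments: nilpotent and abelian Cayley graphs are Hamiltonian. We lift a Hamilton cycle of the quotient via the framework.
\item In the expanding regime, we use that expansion at all scales up to $n^{1-o(1)}$ yields robust connectivity. We combine Pósa rotations with sparse expander path-finding, in the style of Krivelevich, to obtain a cycle of length $cn$, and of length $(1-d^{-1/100})n$ when $d\ge d_0$, as in Christoph--Hunter--Sudakov but without the $\log^3 n$ degree requirement. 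Transitivity then supplies uniform expansion of every vertex neighbourhood.
\end{enumerate}

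The main obstacle is the intermediate regime, where $G$ expands up to some scale $r$ and then looks polynomial-growth beyond it. Here neither argument applies directly. Our plan is to apply DeVos--Mohar at the scale $r$ to obtain blocks that are themselves internal expanders of size about $r$. We apply the expander argument inside each block to obtain near-spanning linkages, and the quotient argument between blocks. The delicate point is ensuring that enough disjoint edges exist between adjacent blocks to route the cycle through every block while only losing a constant fraction. For this we would use transitivity to average edge counts over the block partition, together with Menger's theorem.
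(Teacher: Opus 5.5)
There are genuine gaps in both of your regimes, and the ``intermediate regime'' you single out is an artefact of where you draw the line.

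\textbf{The structured case.} Here you need two things that are not available.
\begin{enumerate}
\item The block quotient would have to be a Cayley graph. In general, a quotient of a vertex-transitive graph by a block system is only vertex-transitive.
\item Cayley graphs of virtually abelian or nilpotent groups would have to be Hamiltonian. Hamiltonicity is known for abelian groups and some special classes, not for nilpotent or virtually abelian groups in general.
\end{enumerate}
DeVos--Mohar also does not give a ``bounded-dimensional torus''. The form used here (Theorem~\ref{thm:DM}) needs a set $A$ whose boundary $k=|\partial A|$ is tiny compared with the depth of $A$ and of its complement. It then returns a \emph{cyclically ordered} block system with blocks of size $s\le k/2$.

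The paper exploits exactly this by putting the threshold below quadratic growth. If $\beta_R\le R^{5/4}$ for some $R_0\le R<n^{4/5}$, averaging $\beta_{r+1}-\beta_r$ over $R/4\le r\le R/2$ gives a ball with $|\partial A|\le5R^{1/4}$ and both sides of depth at least $R/4$. So the structured case is always one-dimensional. Taking $H$ to be the automorphisms fixing every block, $G/H$ is a Cayley graph of a cyclic or dihedral group, handled by Maru{\v{s}}i{\v{c}}'s theorem and Lemma~\ref{lem:dihedral-cycle}. The cycle is then lifted by Lemma~\ref{lem:normal-quantitative}: edges between adjacent $H$-orbits form regular bipartite graphs, hence perfect matchings. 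This gives at most $m$ disjoint cycles, which are merged at a cost of $O(\sqrt{qm\beta_{2\ell}})$ vertices each. No near-spanning paths inside blocks are used, and none would be available, since the blocks are tiny and Theorem~\ref{thm:DM} only bounds their diameter in $G$. Graphs of quadratic growth such as $C_m\times C_m$ land in the other case.

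\textbf{The growth case.} That other case is where the real content lies, and your proposal gives no mechanism for it. Growth $\beta_r>r^{5/4}$ at bounded degree yields only sublinear isoperimetry; in $C_m\times C_m$ a square $A$ has $|\partial A|$ of order $|A|^{1/2}$. P\'osa rotation--extension and Krivelevich-type local-expansion arguments produce cycles only about as long as the neighbourhoods they are given, so they need constant-factor expansion at linear scales to reach length $\Omega(n)$. Sublinear expansion alone does not force long cycles: there are regular sublinear expanders with small circumference. Appealing to Christoph--Hunter--Sudakov ``without the $\log^3 n$ requirement'' assumes what has to be proved.

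The missing idea is a separation condition that vertex-transitivity actually delivers and that suffices for long cycles.
\begin{itemize}
\item Lemma~\ref{lem:ballcap} applies a uniformly random automorphism to a tree of $(2r+1)$-separated centres and to fixed short paths. It shows that for every $S$, at most $(4r+3)|S|/\lambda$ non-largest components of $G-S$ have at least $\lambda\le\beta_r$ vertices.
\item Summing this dyadically gives $\sum_{j\ge2}|X_j|^{1/8}\le K|S|$.
\item Theorem~\ref{thm:general} converts this condition into a long cycle. A DFS argument gives disjoint cycles of length at least $\ell$ and linear total length. Mader's theorem with Schrijver's endpoint matroid gives disjoint linking paths with many endpoints on every cycle. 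Lemma~\ref{lem:prefix-bounded} raises the minimum cycle length by a factor $3/2$ at relative loss $13x^{-1/80}$, so iterating leaves one cycle of length $\Omega(n)$.
\end{itemize}

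\textbf{The bound $(1-d^{-1/100})n$.} This does not come from P\'osa rotations either. In the growth case, the iteration starts from the cycles of length at least $\lfloor d^{19/20}\rfloor$ in a spanning family of $O(n\log d/d)$ cycles supplied by Theorem~\ref{thm:few-cycles}. In the structured case, $c(G)\ge n-n^{19/20}$ together with $d<n$ suffices.
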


In particular, the same result holds for all connected Cayley graphs on at least three vertices.
The proof combines a general criterion for finding long cycles (which we will introduce shortly) with
a structural theorem of DeVos and Mohar~\cite{DeVosMohar}.
The latter leads to a quotient graph of the given vertex-transitive graph that is a Cayley graph of a
cyclic or a dihedral group; an outline of the proof is given in Subsection~\ref{subsec:pf}.

\subsection{A general criterion for linear circumference}
We prove the following general criterion for
the existence of long cycles.
Both its formulation and its proof are motivated by the recent work of Segal and Verstra\"ete~\cite{SegalVerstraete2026}.

\begin{theorem}\label{thm:general}
For every $K\ge0$, there are a constant $c>0$ and integers
$n_0,d_0$ with the following property.
Let $G$ be a graph on $n\ge n_0$ vertices.
Suppose that for every $S\subseteq V(G)$, the components
$X_1,\ldots,X_t$ of $G-S$, indexed in nonincreasing order of size,
satisfy
\begin{equation}\label{eq:component-condition}
 \sum_{j=2}^t |X_j|^{1/8}\le K|S|.
\end{equation}
Then $G$ contains a cycle of length at least $cn$.
Moreover, if $G$ is $d$-regular with $d\ge d_0$, then it contains
a cycle of length at least $(1-d^{-1/100})n$.
\end{theorem}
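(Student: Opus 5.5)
We take a longest path $P$ in $G$ (or a longest cycle $C$) and argue by contradiction that it must cover a linear proportion of the vertices. This follows the strategy of Segal and Verstra\"ete. We first record two consequences of \eqref{eq:component-condition}. Taking $S=\emptyset$ shows that $G$ is connected. Taking $S=\{v\}$, every vertex separates off at most $K$ pieces of bounded total ``weight''. More importantly, condition \eqref{eq:component-condition} is a robust toughness-type statement: removing $S$ can create at most $K|S|$ small components, and the total size of all non-giant components is at most $\sum_{j\ge2}|X_j| \le (\max_j|X_j|)^{7/8}K|S|$.

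\textbf{Main step.} Suppose that a longest cycle $C$ has length $\ell<cn$.
\begin{enumerate}
\item Set $S=V(C)$. By the hypothesis, $G-S$ has one giant component $X_1$ of size at least roughly $n-\ell-(\text{small})$. Indeed, the remaining components have sizes $s_j$ with $\sum s_j^{1/8}\le K\ell$. Hence either they are few and large, or many and tiny; in either case we force $|X_1|\ge n/2$ once $c$ is small, using a separate argument for the case where several large components exist.
\item Inside $X_1$ (which avoids $C$) we find long paths and reroute them through $C$. We count the attachments of $X_1$ to $C$. If $X_1$ has neighbours $x,y$ on $C$ that are close along $C$, we replace the short arc of $C$ between them by a long path through $X_1$, which lengthens $C$ and gives a contradiction.
\item The standard Pósa-rotation/Dirac-type argument then reduces the proof to showing that the set $N$ of attachment points, together with their $C$-successors, cannot separate $X_1$ from too much. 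Applying \eqref{eq:component-condition} to a cleverly chosen set $S'$ consisting of $N$ and the successors of the attachment points (of size $O(\ell)$) shows that $X_1$ is itself highly connected to $C$. More precisely, applying the hypothesis inside $X_1$ with small separators, $X_1$ contains a path of length linear in $|X_1|$. This is where the exponent $1/8$ enters: iterating the separator bound along a DFS-tree decomposition (each DFS tree path is a separator for the subtrees hanging off it) shows that a DFS tree of $X_1$ cannot be too bushy. So a path of length $\Omega(n)$ exists.
\end{enumerate}
Finally, we convert this long path into a long cycle. Its two ends connect back via the giant component structure, and a final separator application ensures that the relevant endpoints have suitably placed neighbours. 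This gives the linear bound.

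\textbf{The $d$-regular statement.} We rerun the argument with $S$ equal to the complement of the longest cycle. Suppose at least $d^{-1/100}n$ vertices are missed. Applying \eqref{eq:component-condition} to $S=V(C)$, the components outside $C$ are numerous and small or there is a giant one. Degree $d$ forces each vertex outside to have many neighbours on $C$. The Pósa-type exchange (a vertex $v$ outside $C$ adjacent to consecutive vertices $u,u^+$ of $C$ extends $C$) then shows that the $C$-successor sets of the neighbourhoods must be independent and disjoint from their neighbourhoods. Hence $|V(C)|\ge$ roughly $d\cdot(\#\text{outside components})/K$, and a Bondy--Chvátal style counting with the $1/8$ power gives the $d^{-1/100}$ loss.

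\textbf{Main obstacle.} I expect the main obstacle to be step 3 above: producing a linear-length path purely from \eqref{eq:component-condition}, and then closing it into a cycle without losing a constant factor. The DFS-tree/separator iteration must be organised so that the sublinear exponent $1/8$ sums to a convergent geometric series.
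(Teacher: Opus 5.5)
There is a genuine gap. What you describe is a longest-cycle/rotation heuristic in which the decisive steps are unjustified or false, and the idea that actually produces one long cycle is absent.

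\textbf{Step 1.} It does not follow from the hypothesis. Applying \eqref{eq:component-condition} to $S=V(C)$ with $|C|=\ell<cn$ only gives $\sum_{j\ge2}|X_j|^{1/8}\le Kcn$. This is compatible with $G-V(C)$ consisting entirely of components of constant size, roughly $(Kc)^{-8/7}$, covering $n-\ell$ vertices. So no giant component is forced; the bad case is many small components, not several large ones. Steps 2--3 then have nothing to work in.

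\textbf{Step 3.} It is also unsupported. Without control on how far back-edges jump in a DFS tree, cutting off a heavy subtree costs its whole ancestor path. The natural accounting applies \eqref{eq:component-condition} to the set of vertices whose subtrees have at least $\lambda$ vertices. That set is a subtree with at most $n/\lambda$ leaves and depth at most the DFS depth $D$, and the argument only gives $D\gtrsim n^{1/8}/K$, not a linear path.

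The paper's DFS argument (Lemma~\ref{lem:prefix-initial}) works only because, outside a maximal family of disjoint cycles of length at least $\ell$, every edge joins tree vertices at distance at most $\ell-2$. So each cut costs only $\ell-1$ vertices. Its output is a family of disjoint long cycles of linear total length, not a single path.

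\textbf{Closing the path.} Most importantly, turning a linear path into a linear cycle by ``a final separator application'' is the heart of the theorem, and you give no mechanism for it.

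\textbf{The missing ingredient: merging cycle families.}
\begin{itemize}
\item Subadditivity of $x\mapsto x^{1/8}$ turns \eqref{eq:component-condition} into a lower bound $2b(m-1)$ on every transversal of the paths joining $m$ disjoint vertex sets of size at least $(2(K+1)b)^8$.
\item Mader's theorem converts this into a large path packing. Schrijver's Mader matroid together with Rado's theorem then gives a packing with at least $b$ endpoints on every set (Lemma~\ref{lem:ports}).
\item With these paths, Lemma~\ref{lem:prefix-bounded} rebuilds a family of at least two disjoint cycles of minimum length $x$ into one of minimum length $3x/2$. It loses only a $13x^{-1/80}$ fraction of the total length.
\item These losses form a convergent series, so iterating (Proposition~\ref{prop:merge-family}) yields a single cycle of length $(1-L^{-1/90})M$.
\end{itemize}

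\textbf{The regular case.} It has the same gap. The claim that degree $d$ forces outside vertices to have many neighbours on $C$ is false, since a large component of $G-V(C)$ can contain all their edges. Nor does any Bondy--Chv\'atal count produce a $d^{-1/100}$ loss.

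The paper instead starts from Theorem~\ref{thm:few-cycles}. Applied to the bidirected graph, it gives a spanning family of $O(n\log d/d)$ directed cycles. Hence the cycles of length at least $L=\lfloor d^{19/20}\rfloor$ miss only $O(nd^{-1/20}\log d)$ vertices. The paper then applies the same merging proposition to this family.
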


We outline its proof in Subsection~\ref{subsec:pf}.
More generally, for a positive function $h$ on the positive integers and a constant $K\ge0$, one can consider the condition
\begin{equation}\label{eq:weighted-components}
 \sum_{j=2}^t h(|X_j|)\le K|S|
\end{equation}
for every $S\subseteq V(G)$, where $X_1,\ldots,X_t$ are the components of $G-S$, listed in nonincreasing order of size.
Condition~\eqref{eq:component-condition} is then precisely the case $h(x)=x^{1/8}$.
The choices $h\equiv1$ and $h(x)=x$ correspond, up to constant
factors in the parameters, to toughness and vertex expansion,
respectively; see Appendix~\ref{app:toughness-expansion}
for the precise comparisons.
The hypothesis of Theorem~\ref{thm:general} can thus be viewed
as interpolating between these two conditions

A similar interpolating connection between vertex connectivity and vertex expansion was studied by Brandt, Broersma, Diestel and Kriesell~\cite{BBDK} in their work on cycles of linear length.

\paragraph{Notation.}
Unless stated otherwise, all graphs considered here are finite, simple, and undirected.
The \emph{length} of a path or cycle is its number of edges.
Given a graph $G$, we write $|G|$ for its order, i.e., the number of vertices, and $c(G)$ for its \emph{circumference}, the maximum length of a cycle in $G$ (with $c(G)=0$ if $G$ is acyclic).
For a nonnegative integer $r$ and a vertex $v\in V(G)$, the \emph{ball of radius $r$ centred at $v$} is
$B_G(v,r)=\{u\in V(G):d_G(u,v)\le r\}$, where $d_G(u,v)$ denotes the distance between $u$ and $v$, i.e., the length of a shortest $(u,v)$-path.
If $G$ is vertex-transitive, we write $\beta_r=|B_G(v,r)|$, which is independent of the choice of $v$.
All implicit constants in asymptotic notation are absolute, unless they depend on parameters explicitly fixed in the statement concerned.

Subgraphs are \emph{disjoint} if their vertex sets are disjoint. Given disjoint sets $A,B\subseteq V(G)$, an \emph{$(A,B)$-path} is a path with one endpoint in $A$, the other in $B$, and no internal vertex in $A\cup B$; we use the same terminology for subgraphs, meaning their vertex sets.

\subsection{Proof outline}\label{subsec:pf}
We first outline the proof of Theorem~\ref{thm:general}.
Let $G$ be a graph satisfying condition~\eqref{eq:component-condition}.
We begin by constructing a family of disjoint cycles whose total length is linear in $n$ (i.e., Lemma~\ref{lem:prefix-initial}).
For general graphs satisfying condition~\eqref{eq:component-condition},
we use a DFS argument to give such
a family. For regular graphs, we use Theorem~\ref{thm:few-cycles} instead
to find disjoint cycles covering almost all vertices.
The most technical part of the proof is Lemma~\ref{lem:prefix-bounded},
whose proof adapts an argument of Segal and
Verstra\"ete~\cite{SegalVerstraete2026}.
Given a family of at least two sufficiently long disjoint cycles,
Lemma~\ref{lem:prefix-bounded} gives another family whose minimum cycle length is
at least $3/2$ times that of the given family, with only
a small decrease in total length.
To prove Lemma~\ref{lem:prefix-bounded}, we first find
disjoint paths joining distinct cycles of the given family, with many endpoints on every cycle.
Their existence follows from Lemma~\ref{lem:ports},
which uses condition~\eqref{eq:component-condition} and an application
of Mader's theorem.
We construct the required new family within the union
of the given cycles and these paths.
Iterating Lemma~\ref{lem:prefix-bounded} then gives a single
cycle while controlling the total decrease in length.
This proves the conclusions of Theorem~\ref{thm:general}.

We now turn to the proof of Theorem~\ref{thm:main}. Let $G$ be a connected
vertex-transitive graph on $n$ vertices.
Recall that a ball of radius $r$ consists of the vertices
at distance at most $r$ from its center; its size $\beta_r$
is independent of the center.
We first establish in Lemma~\ref{lem:ballcap} an inequality valid for
every connected vertex-transitive graph.
This enables us to verify condition~\eqref{eq:component-condition}
with an absolute constant $K$ whenever $\beta_r>r^{5/4}$
for every integer $R_0\le r<n^{4/5}$ with some absolute constant $R_0$.
Applying Theorem~\ref{thm:general} then gives both required
conclusions.
It therefore remains to consider the case where
$\beta_R\le R^{5/4}$ for some integer
$R_0\le R<n^{4/5}$.
Here we apply a theorem of DeVos and Mohar, stated below
as Theorem~\ref{thm:DM}, to obtain a cyclically ordered partition of $V(G)$.
From this partition, we construct a quotient graph that is a Cayley graph of a cyclic or a dihedral group.
The quotient graph has a Hamiltonian cycle in the cyclic case by the classical result of Maru{\v{s}}i{\v{c}}~\cite{Marusic1983},
and Lemma~\ref{lem:dihedral-cycle} gives a cycle covering
almost all vertices in the dihedral case.
Using Lemma~\ref{lem:normal-quantitative}, we lift this cycle
to $G$ with a small loss and obtain a cycle of desired length, which completes the proof of Theorem~\ref{thm:main}.

\paragraph{Organization.}
The rest of the paper is organized as follows.
In Section~\ref{sec:growth}, we prove Theorem~\ref{thm:general} for the general criterion.
In Section~\ref{sec:vertex-transitive}, we prove our main result -- Theorem~\ref{thm:main} for vertex-transitive graphs.
We conclude in Section~\ref{sec:robust} with a discussion of related problems, including an application to regular robust sublinear expanders.

\section{Proof of Theorem~\ref{thm:general}}\label{sec:growth}
Throughout this section, 
we fix $K\ge0$, and let $G$ be a graph on $n$ vertices satisfying~\eqref{eq:component-condition}.
For a family of disjoint cycles, we call the sum of their
lengths the \emph{total length}.
\subsection{Initial families of disjoint cycles}\label{sec:initial-family}

The following lemma yields a family of disjoint cycles of total length linear in $n$, which will be needed in the proof of Theorem~\ref{thm:general}.
Recall that a \textit{DFS tree} of a connected graph is a rooted spanning
tree obtained by depth-first search.
Given a DFS tree, two vertices are \textit{comparable} if one lies on the subpath of the tree between the other and the root.
Every edge of the graph joins comparable vertices in the DFS tree.

\begin{lemma}\label{lem:prefix-initial}
For every integer $\ell\ge3$, there exist $\nu>0$ and an integer $n_0$ such that, if $n\ge n_0$, then $G$ contains disjoint cycles of minimum
length at least $\ell$ and total length at least $\nu n$.
\end{lemma}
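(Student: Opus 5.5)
The plan is a greedy extraction of cycles, with a treewidth argument to show it cannot stop early. Choose the cycles one at a time. Suppose the disjoint cycles $C_1,\dots,C_k$, each of length at least $\ell$, have already been chosen and cover a set $W$. If $|W|\ge\nu n$ we stop. Otherwise we show that $G-W$ contains a cycle of length at least $\ell$, add it to the family, and repeat. The whole proof is therefore the following claim: if $|W|<\nu n$ and $n\ge n_0$, then $G-W$ has a cycle of length at least $\ell$.

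Suppose instead that every cycle of $H:=G-W$ has length less than $\ell$. First we get a tree decomposition. Take a DFS tree (spanning forest) of each component of $H$. Every edge of $H$ joins comparable vertices. An edge joining a vertex $v$ to an ancestor at distance at least $\ell-1$ would close a cycle of length at least $\ell$ with the tree path, so every edge spans fewer than $\ell-1$ levels. Hence the bags $\{v\}\cup\{\text{the } \ell-2 \text{ nearest ancestors of } v\}$, arranged along the DFS tree, form a tree decomposition of $H$ of width less than $\ell$.

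Next we use the standard separator property of bounded-treewidth graphs. For every $s$, a graph on $m$ vertices of treewidth less than $\ell$ has a set $S'$ with $|S'|\le C\ell m/s$ such that every component of $H-S'$ has at most $s$ vertices. One way to see this is to cut the decomposition tree repeatedly, removing one bag each time a piece of size about $s$ accumulates.

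Now apply the hypothesis~\eqref{eq:component-condition} with $S=W\cup S'$. Every component of $G-S$ has at most $s$ vertices. Discarding the largest one, the remaining components have total size at least $n-|W|-|S'|-s$. Each has size at most $s$, so its contribution $|X_j|^{1/8}$ is at least $|X_j|/s^{7/8}$. This gives
\[
\frac{n-\nu n-C\ell n/s-s}{s^{7/8}}\;\le\;\sum_{j\ge2}|X_j|^{1/8}\;\le\;K\bigl(\nu n+C\ell n/s\bigr).
\]
Fix the constants in this order. First choose $s$ so large that $C\ell/s\le 1/4$ and $KC\ell/s<\frac{1}{8}s^{-7/8}$; this is possible because $s^{-1}$ decays faster than $s^{-7/8}$. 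Then choose $\nu\le 1/4$ with $K\nu<\frac18 s^{-7/8}$. Finally choose $n_0$ with $s\le n_0/4$. With these choices the left side is at least $\frac14 n\,s^{-7/8}$, while the right side is less than $\frac14 n\,s^{-7/8}$, a contradiction.

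So while $|W|<\nu n$ a new cycle of length at least $\ell$ can always be found. Since each new cycle strictly enlarges $W$, the process ends with disjoint cycles of minimum length at least $\ell$ and total length at least $\nu n$. The same argument with $W=\emptyset$ also covers the first step.

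The main obstacle I expect is proving the separator/treewidth step cleanly. The key points are that short cycles force every DFS back edge to be short, and that bounded-span back edges give a decomposition in which the size-$s$ pieces can be cut off by removing $O(\ell)$ vertices per piece. The cleanest route is probably to do this directly on the DFS tree rather than citing treewidth: repeatedly take a deepest vertex $v$ whose remaining subtree has at least $s$ vertices, and delete $v$ together with its $\ell-2$ nearest ancestors. The short-edge property guarantees that what remains of $v$'s subtree is separated from the rest of the graph.
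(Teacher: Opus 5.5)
Your proposal is correct and is essentially the paper's proof. The paper takes an inclusion-maximal family rather than extracting cycles greedily, but it uses the same DFS-tree argument to build $S\supseteq U$ with all components of size at most $\lambda$ and $|S\setminus U|\le \ell n/\lambda$: it exploits short back edges and deletes a deepest vertex with a heavy subtree together with its $\ell-2$ nearest ancestors. It then applies~\eqref{eq:component-condition} exactly as you do. One detail to watch: the paper recomputes a DFS tree of the current large component at each step, which makes the cut-off subtrees disjoint. If you keep a single fixed tree, your ``remaining subtree'' must exclude vertices already cut off, not merely deleted ones. Otherwise the number of steps is not bounded by $m/s$.
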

\begin{proof}
Choose an integer $\lambda$ such that
$\lambda^{1/8}\ge4\ell(K+1)$.
Let $n$ be sufficiently large so that
$n\ge2\lambda$.
Choose an inclusion-maximal family of disjoint cycles of length
at least $\ell$, and denote the union of their vertex sets by $U$.
Then the maximality of this family implies that $H:=G-U$ has no cycle of length at least $\ell$.

We will apply~\eqref{eq:component-condition} to a set
$S\supseteq U$ such that every component of $G-S$ has at most
$\lambda$ vertices and $|S\setminus U|\le\ell n/\lambda$.
Start with $S=U$, so $G-S=H$.
Whenever $G-S$ has a component with more than $\lambda$
vertices, perform one step as follows.
Choose such a component $C$ and a DFS tree $T$ of $C$,
rooted at any vertex. Among the vertices whose descendant
subtrees have more than $\lambda$ vertices, choose a vertex
$z$ at maximum distance from the root in $T$.
Add to $S$ the vertex $z$ and the at most $\ell-2$ ancestors
nearest to $z$ on its path to the root.
The construction terminates when every component of $G-S$
has at most $\lambda$ vertices.

To prove $|S\setminus U|\le\ell n/\lambda$, we bound the
number of steps. Fix a step and let $T_z$ be the subtree
of $T$ rooted at $z$. Every edge of $C$ joins comparable
vertices in $T$ at distance at most $\ell-2$, as otherwise there would be a cycle of length at least
$\ell$. Consequently, after this step every vertex of
$T_z-z$ lies in a component of $G-S$ of size at most
$\lambda$. As $z$ is added to $S$,
no vertex of $T_z$ can be chosen later.
The subtrees $T_z$ obtained in different steps are therefore
pairwise disjoint in $H$. Since $|T_z|\ge\lambda+1$ by the choice of $z$,
there are at most $|H|/(\lambda+1)$ steps, each adding
at most $\ell-1$ vertices to $S$. Hence
\[
 |S\setminus U|
 \le \frac{(\ell-1)|H|}{\lambda+1}
 \le \frac{\ell n}{\lambda}.
\]

Apply~\eqref{eq:component-condition} to $S$, writing
$X_1,\ldots,X_t$ for the components of $G-S$ in nonincreasing
order of size. Since each has at most $\lambda$ vertices,
\[
\begin{aligned}
 n\le\lambda+|S|+\lambda^{7/8}\sum_{j=2}^t|X_j|^{1/8}\le\lambda+(1+K\lambda^{7/8})|S|
 \le\frac n2+\frac{\lambda}{4\ell}|S|.
\end{aligned}
\]
The last inequality uses $n\ge2\lambda$ and
$1+K\lambda^{7/8}\le(K+1)\lambda^{7/8}\le\lambda/(4\ell)$
by the choice of $\lambda$.
Thus $|S|\ge2\ell n/\lambda$, and
\[
 |U|=|S|-|S\setminus U|
 \ge\frac{2\ell n}{\lambda}-\frac{\ell n}{\lambda}
 =\frac{\ell n}{\lambda}.
\]
The chosen cycles have total length $|U|$, so
$\nu=\ell/\lambda$ suffices.
\end{proof}

For the second conclusion of Theorem~\ref{thm:general},
we need disjoint cycles that together cover almost all vertices and whose minimum length tends to infinity with the degree.
We use the following theorem in its original directed form.
A directed graph is $d$-regular if every vertex has indegree and
outdegree $d$; here loops and directed cycles of length two are
allowed, but parallel arcs are not.

\begin{theorem}[Christoph--Dragani\'c--Gir\~ao--Hurley--Michel--M\"uyesser {\cite[Theorem~1.1]{CDGHMM}}]\label{thm:few-cycles}
Let $D$ be a directed $d$-regular graph on $n$ vertices, where
$d\ge2$. A uniformly random spanning family of disjoint directed
cycles in $D$ has $O(n\log d/d)$ cycles in expectation.
\end{theorem}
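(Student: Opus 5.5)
This statement is quoted from \cite[Theorem~1.1]{CDGHMM}, and the paper uses it as a black box. What follows is the route I would take to reprove it, not something the paper requires.

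Spanning families of disjoint directed cycles in $D$, loops allowed, correspond bijectively to permutations $\sigma$ of $V(D)$ with $v\sigma(v)\in E(D)$ for all $v$. Equivalently, they are perfect matchings of the bipartite double cover $B$, which is $d$-regular bipartite. A uniform cycle factor is therefore a uniform perfect matching $M$ of $B$, read as a permutation, and the number of cycles equals the number of cycles of $\sigma_M$. By linearity of expectation,
\[
\mathbb{E}[\#\text{cycles}]=\sum_{v}\mathbb{E}\!\left[\frac{1}{|C_v|}\right],
\]
where $C_v$ is the cycle of $\sigma$ containing $v$. It therefore suffices to show, for each fixed $v$ and each $k$, that
\[
\Pr\bigl(|C_v|=k\bigr)\lesssim \frac{\log d}{d}\cdot \frac{1}{\text{something summable against }1/k}.
\]
A convenient target is $\Pr(|C_v|\le k)\le O(k/d)$ for $k\le d$. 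Summing $\sum_k \Pr(|C_v|=k)/k$ by parts then gives
\[
\sum_{k\le d} \frac{O(1/d)}{k}+\frac1d=O\!\left(\frac{\log d}{d}\right).
\]

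The core estimate is $\Pr(|C_v|=k)=O(1/d)$ for each $k$. Roughly, this says that once we trace $v\to\sigma(v)\to\cdots$ for $k-1$ steps, the chance that the next step returns to $v$ is about $1/d$. I would prove it by a switching argument combined with permanent bounds. Condition on the partial path $v=x_0,x_1,\dots,x_{k-1}$. The remaining matching is a uniform perfect matching of $B$ minus those matched vertices. The probability that $x_{k-1}$ is matched to $v$ is the ratio $\operatorname{per}(A')/\sum_{u}\operatorname{per}(A_u)$ over the out-neighbours $u$ of $x_{k-1}$. The Brègman and Van der Waerden/Schrijver bounds show that deleting a row and column of a near-regular $0/1$ matrix changes the permanent by a factor controlled up to $e^{O(1)}$ relative to average behaviour. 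This makes each available choice $u$ comparably likely, giving probability $O(1/\deg)$ for hitting $v$.

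The main obstacle is that after deleting many matched vertices the remaining bipartite graph is no longer regular, so the permanent ratios are not uniformly controlled. I would handle this in two steps.

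1. Restrict to $k\le d/2$, so that at most $d/2$ vertices are removed and all remaining degrees stay at least $d/2$.
2. Use the Lu–Mohr–Székely / Kahn–Lovász style local permanent-ratio bounds, which give $\operatorname{per}(A-\{i,j\})/\operatorname{per}(A)\asymp 1/d$ up to constants for matrices with degrees between $d/2$ and $d$. I expect these estimates to be the technical heart.

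For $k>d/2$ the contribution to $\mathbb{E}[1/|C_v|]$ is at most $2/d$ trivially. Summing the two ranges over $v$ yields $O(n\log d/d)$.
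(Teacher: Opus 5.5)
The paper does not prove this statement. It is quoted from Christoph et al.\ and used only as a black box in the regular case of Theorem~\ref{thm:general}. Your reproof has a genuine gap at its core: the per-vertex estimate $\Pr(|C_v|=k)=O(1/d)$ is false, already for $k=1$, where no conditioning is involved.

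Here is a counterexample. Start from $K_{d,d}$, fix a left vertex $a$ with neighbours $b_1,c_2,\dots,c_d$, and delete the edges $ac_2,\dots,ac_d$. For each $i\ge 2$, add a copy of $K_{d,d}$ minus an edge $p_iq_i$, together with the edges $aq_i$ and $p_ic_i$. This bipartite graph is $d$-regular. Identifying $a$ with $b_1$, and pairing the other vertices arbitrarily, gives a $d$-regular digraph with a loop at $v=a=b_1$.

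Now count perfect matchings. Those containing $ab_1$ must match the other $d-1$ left vertices of the first block onto $c_2,\dots,c_d$, and every gadget internally, so there are $(d-1)!\,[(d-1)(d-1)!]^{d-1}$ of them. Those containing $aq_i$ force $p_ic_i$, and there are $((d-1)!)^2[(d-1)(d-1)!]^{d-2}$ of them, fewer by a factor $d-1$, for each of the $d-1$ values of $i$. Hence $\Pr(\sigma(v)=v)=1/2$, and taking a disjoint union with any large $d$-regular digraph keeps this as $n\to\infty$.

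Consequences for your plan:
\begin{itemize}
\item No bound $\mathbb{E}[1/|C_v|]=O(\log d/d)$ holds uniformly in $v$.
\item The local ratio $\operatorname{per}(A-\{i,j\})/\operatorname{per}(A)\asymp1/d$ you intend to cite fails even for exactly $d$-regular $A$.
\item After deleting $k-1$ rows and columns, a Hall-tight set can even force an edge, making the conditional closing probability equal to $1$.
\item Br\`egman and van der Waerden/Schrijver cannot supply this step. They are global bounds whose ratio is $e^{\Theta(n\log d/d)}$, so comparing $A$ with a one-row-one-column minor through them controls the ratio only up to a factor that grows with $n$.
\end{itemize}

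What is missing is an argument that bounds $\sum_v\mathbb{E}[1/|C_v|]$ as a whole rather than vertex by vertex. Your identity $\mathbb{E}[\#\text{cycles}]=\sum_v\mathbb{E}[1/|C_v|]$ and your treatment of $k>d/2$ are fine. But bad vertices do exist, and the content of the theorem is that they are rare: in the example, the single bad vertex comes with a gadget of $d^2$ vertices on each side. A proof must therefore exploit averaging over all cycle factors and all vertices at once, for instance through a global counting, entropy or switching argument. It is suggestive that the $e^{\Theta(n\log d/d)}$ gap between Br\`egman's upper bound and Schrijver's lower bound is exactly of the order of the target.
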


\subsection{Increasing the minimum cycle length}\label{sec:replacement}

The key lemma of this section increases the minimum length
in a family of disjoint cycles while preserving almost all
of their total length.

\begin{lemma}\label{lem:prefix-bounded}
There is an integer $x_0$, depending only on $K$, such that
the following holds for every integer
$x\ge x_0$.
If $G$ contains a family of at least two disjoint cycles of
minimum length at least $x$ and total length $M$, then
$G$ contains a family of disjoint cycles of minimum length at
least $3x/2$ and total length at least $(1-13x^{-1/80})M.$
\end{lemma}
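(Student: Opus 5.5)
We plan to build the new family inside the union of the given cycles $C_1,\dots,C_m$ ($m\ge2$) and a collection of disjoint connecting paths. Cycles of length at least $3x/2$ are kept unchanged. Each \emph{short} cycle, of length in $[x,3x/2)$, must either be merged with another cycle or discarded. The total length of discarded cycles and of deleted cycle segments must be at most $13x^{-1/80}M$.

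\textbf{Step 1 (ports).} Let $\mathcal C$ be the family and $U$ the union of its vertex sets. We will show, via the statement the outline calls Lemma~\ref{lem:ports}, that there is a family of disjoint $(C_i,C_j)$-paths with $i\ne j$, internally disjoint from $U$ or nearly so, such that almost every cycle receives many endpoints. Concretely, every cycle $C_i$ outside an exceptional set of total length at most $x^{-1/80}M$ should receive at least $q:=x^{1/40}$ endpoints, roughly equally spaced along $C_i$.

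To prove this we use Mader's theorem on $\mathcal S$-paths, taking the vertex sets $V(C_i)$ as the terminal classes. Mader's min-max formula says that if few disjoint $\mathcal S$-paths leave a given group of cycles, then there is a small separator $S$ together with a component structure in which the cycles lie in different pieces. We then apply~\eqref{eq:component-condition} to this $S$. Each piece other than the largest contains a cycle of length at least $x$, so it contributes at least $x^{1/8}$ to the left-hand side. Hence the number of pieces is at most $K|S|/x^{1/8}$, which forces $|S|$ to be large relative to the number of short cycles. Applying this locally, cycle by cycle, by subdividing each cycle into $q$ arcs of length about $x/q$ and asking for a path from each arc, should give the spread-out ports. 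We discard cycles where this fails and charge them to the $x^{-1/80}$ loss.

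\textbf{Step 2 (merging).} Form an auxiliary multigraph $A$ on the non-exceptional cycles, with one edge for each connecting path. Every vertex of $A$ has degree at least $q$. We choose a subgraph of $A$ in which each short cycle has positive degree and the components are small stars or matchings; such a subgraph exists by a greedy argument or by taking a spanning forest and splitting it into stars.

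For each edge $P$ of this subgraph joining $C_i$ to $C_j$, we take two nearby ports on $C_i$ and on $C_j$, meaning ports at cycle distance about $x/q$. Two paths $P,P'$ between $C_i$ and $C_j$ whose endpoints are close on both cycles let us form a single cycle. It traverses almost all of $C_i$, then $P$, then almost all of $C_j$, then $P'$ back. It loses only the two short arcs of length $O(x/q)$ between the port pairs.

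This is why we want pairs of paths between the same two cycles. The plan is therefore to pigeonhole among the $q$ ports, using the equal spacing to guarantee close pairs. If two paths to the same neighbour are unavailable, we instead chain $C_i\to C_j\to C_k\to\dots$ around a cycle of $A$. In either case each merged cycle has length at least $2x-O(x/q)\ge3x/2$. A star merges its centre with the leaves one at a time, losing $O(x/q)$ per leaf, which is a fraction $O(1/q)$ of the length involved.

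\textbf{Step 3 (accounting).} The total loss consists of three parts:
\begin{itemize}
\item the exceptional cycles, at most $x^{-1/80}M$;
\item the deleted arcs, $O(1/q)M$;
\item any connecting paths that are left unused, which cost nothing.
\end{itemize}
Any connecting-path vertices that land on the cycles themselves we handle by truncating paths at their first hit. Choosing $x_0$ large then makes the sum at most $13x^{-1/80}M$.

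The main obstacle is Step 1: turning~\eqref{eq:component-condition} together with Mader's theorem into disjoint paths that land on every cycle in \emph{two nearby} positions paired with the same partner. The exponent $1/8$ has to beat the Mader separator loss. We would try first to work with the arcs of the subdivided cycles as the terminal sets, which makes the count per cycle automatic and the exceptional set measurable.
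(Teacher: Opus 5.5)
The gap is in Step~2, which is where the real difficulty of this lemma lies.

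Your basic merge needs, for an edge $C_iC_j$ of the star forest, a second connecting path $P'$ between the \emph{same} two cycles whose endpoints are close to those of $P$ on \emph{both} cycles. This is not available in general. When the given cycles cover $V(G)$, the connecting paths are single edges, and nothing in~\eqref{eq:component-condition} prevents each cycle from sending them to pairwise distinct partners (think of a $2$-factor with cycles of length $x$ plus a random perfect matching). So no strengthening of Step~1 can produce ``two nearby positions paired with the same partner''.

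Your fallback, chaining $C_i\to C_j\to C_k\to\cdots$ around a cycle of $A$, needs the entry and exit ports to be close on \emph{every} cycle of the chain simultaneously. Pigeonhole gives each cycle some close pair of ports, but that pair dictates both of its neighbours on the chain. You give no argument that these local choices can be made consistently, so as to close up into disjoint chains covering almost all short cycles.

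Stars have the same problem. Absorbing many leaves into one centre requires the excised arcs of the centre to be short and pairwise disjoint, while also being matched by close ports on each leaf. Merging ``one leaf at a time'' does not ensure this.

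Step~1 has a smaller issue too: a maximum Mader packing need not put endpoints on every cycle. The paper gets at least $s^8$ endpoints on \emph{every} cycle, with no exceptional set and no spacing requirement. It does so by combining the transversal bound $|Z|\ge 2b(m-1)$ for every subfamily with Schrijver's matroid of packing endpoints and Rado's theorem.

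The missing idea is to require closeness only on the cycle being absorbed, and to pay for all other losses with a sparse random set of sacrificial cycles.
With $s=\lceil x^{1/80}\rceil$, call two cycles adjacent if at least $s^6$ connecting paths join them. For a short cycle $D$ with no such neighbour, put each cycle into a hub family $\mathcal B$ independently with probability $1/s$, plus a cleanup.
\begin{itemize}
\item By Chernoff, $D$ sends at least $s^7/2$ of its paths into $\mathcal B$ (otherwise $D$ itself is put into $\mathcal B$), and fewer than $s^6$ to any single hub.
\item So at least $s/2$ pairs of consecutive labelled ports on $D$ lead to \emph{different} hubs, and one of them bounds an arc of length at most $2|D|/s$.
\item This turns $D$ into a path $L_D$ between two hubs, and closeness on the hub side is never needed.
\end{itemize}
The hubs are then glued by taking an inclusion-maximal even-degree spanning subgraph of the multigraph on $\mathcal B$ with one edge per $L_D$. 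On each hub, join the endpoints $v_{2j-1}$ and $v_{2j}$ in cyclic order. This may destroy much of every hub and drops at most $|\mathcal B|$ of the paths $L_D$. That is affordable because $\mathbb E\sum_{D\in\mathcal B}(|D|+2x)\le 6M/s$. Each resulting cycle contains at least two paths $L_D$, so it has length at least $2(1-2/s)x\ge 3x/2$.

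Cycles with a heavy neighbour are grouped into stars, and the centre is cut into segments carrying $s^6$ ports each. Leaves with $s^4$ ports on a single segment are inserted along disjoint short intervals chosen greedily. The remaining leaves are absorbed by the same hub-and-parity trick, with a random $1/s$-fraction of the segments playing the role of hubs.
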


For the proof of Lemma~\ref{lem:prefix-bounded}, we need
disjoint paths joining distinct cycles, with all internal
vertices outside the cycles and sufficiently many endpoints
on every cycle.
We obtain these paths in Lemma~\ref{lem:ports} using the
following tools for disjoint vertex sets.

Let $F$ be a graph, and let
$\mathcal T=\{T_1,\ldots,T_N\}$ be a family of disjoint
vertex sets in $F$.
A path in $F$ is called a \emph{$\mathcal T$-path} if its
endpoints lie in different sets $T_i$ and its internal
vertices lie outside $\bigcup_{i=1}^N T_i$.
A \emph{$\mathcal T$-packing} is a family of pairwise
vertex-disjoint $\mathcal T$-paths, and a
\emph{$\mathcal T$-transversal} is a subset of $V(F)$
meeting every $\mathcal T$-path.
We use the following corollary of Mader's packing theorem.

\begin{lemma}\label{thm:mader-transversal}
Let $\mathcal T$ be a family of disjoint vertex sets
in a graph $F$.
There is a $\mathcal T$-transversal of size at most
twice the maximum size of a $\mathcal T$-packing.
\end{lemma}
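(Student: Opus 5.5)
We would derive this directly from the min--max formula in Mader's $\mathcal T$-path theorem, by reading a small transversal off an optimal ``Mader certificate''. Write $T=\bigcup_i T_i$ and let $\nu$ be the maximum size of a $\mathcal T$-packing. We use the vertex form of Mader's theorem as stated in Schrijver's \emph{Combinatorial Optimization}, Theorem~73.1. It says that $\nu$ equals the minimum of
\[
 |U_0|+\sum_{i=1}^k\Bigl\lfloor\tfrac{|B_i|}{2}\Bigr\rfloor ,
\]
taken over all partitions $U_0,U_1,\ldots,U_k$ of $V(F)$ with the following property: every $\mathcal T$-path avoiding $U_0$ contains an edge with both ends in a single $U_i$, $i\ge1$. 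Here $B_i$ is the set of vertices of $U_i$ that lie in $T$ or have a neighbour outside $U_0\cup U_i$. We fix a partition attaining this minimum. We would first double-check the precise form of the theorem in the source, since the argument relies on three features: the floor of $|B_i|/2$, the inclusion of $T\cap U_i$ in $B_i$, and the ``some edge spanned by a part'' condition.

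The naive choice of transversal, $U_0\cup\bigcup_i B_i$, costs $|B_i|$ per part, which is at most $3\lfloor |B_i|/2\rfloor$ and gives only a factor~$3$. To obtain the factor~$2$, for each $i$ with $B_i\neq\emptyset$ we discard one vertex $b_i\in B_i$ and set
\[
 Z=U_0\cup\bigcup_{i:\,B_i\ne\emptyset}\bigl(B_i\setminus\{b_i\}\bigr).
\]
Since $|B_i|-1\le 2\lfloor |B_i|/2\rfloor$ for every $|B_i|\ge1$, we get $|Z|\le 2\nu$.

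It remains to show that $Z$ meets every $\mathcal T$-path. This is the only real step, and it depends on the boundary structure. Let $P$ be a $\mathcal T$-path with $P\cap U_0=\emptyset$. Then $P$ contains an edge inside some $U_i$. Let $Q$ be the maximal subpath of $P$ that contains this edge and lies in $U_i$; it has at least two vertices. We claim both ends of $Q$ lie in $B_i$. Each end of $Q$ is either an endpoint of $P$, and hence in $T\cap U_i\subseteq B_i$, or it is adjacent on $P$ to a vertex outside $U_i$. In the latter case that neighbour is not in $U_0$ either, because $P$ avoids $U_0$, so again the end lies in $B_i$. The two ends of $Q$ are distinct, so at least one of them differs from $b_i$ and therefore lies in $Z$. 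Hence $Z$ is a $\mathcal T$-transversal of size at most $2\nu$, as required.
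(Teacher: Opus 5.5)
Your proposal is correct and follows the paper's proof. Both take an optimal Mader certificate, keep $U_0$ together with all but one vertex of each nonempty $B_j$, and use $|B_j|-1\le 2\lfloor |B_j|/2\rfloor$ to get the factor two. The only difference is that you derive, via the maximal-subpath argument, the fact that every $\mathcal T$-path avoiding $U_0$ meets some $B_j$ in at least two vertices, starting from the ``edge spanned by some $U_i$'' form of the theorem; the paper quotes this property directly from its cited version of Mader's theorem.
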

\begin{proof}
Let $\nu$ be the maximum size of a $\mathcal T$-packing.
Mader's theorem, in the form stated
in~\cite[p.~319]{SchrijverPaths}, gives pairwise disjoint
vertex sets $U_0,B_1,\ldots,B_r$ such that
\[
 \nu=|U_0|+\sum_{j=1}^r
              \left\lfloor\frac{|B_j|}{2}\right\rfloor
\]
and every $\mathcal T$-path avoiding $U_0$ contains
at least two vertices of some $B_j$.
Taking all vertices of $U_0$ and all but one vertex
of each nonempty $B_j$ therefore gives a
$\mathcal T$-transversal of size
\[
 |U_0|+\sum_{j=1}^r\max\{|B_j|-1,0\}
 \le |U_0|+2\sum_{j=1}^r
                 \left\lfloor\frac{|B_j|}{2}\right\rfloor
 \le 2\nu. \qedhere
\]
\end{proof}

We also need a packing with sufficiently many
endpoints in every $T_i$.

\begin{lemma}\label{thm:endpoint-selection}
Let $\mathcal T=\{T_1,\ldots,T_N\}$ be a family of disjoint
vertex sets in a graph $F$, and let $b\ge1$ be an integer.
Suppose that for every $I\subseteq\{1,\ldots,N\}$, some
$\mathcal T$-packing has at least $b|I|$ endpoints in
$\bigcup_{i\in I}T_i$.
Then there is a $\mathcal T$-packing with at least $b$
endpoints in every $T_i$.
\end{lemma}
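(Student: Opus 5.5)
The plan is to cast the statement as a matroid transversal problem and apply Rado's theorem.

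Let $T=\bigcup_{i=1}^N T_i$. Call $X\subseteq T$ \emph{independent} if $X$ is contained in the endpoint set of some $\mathcal T$-packing. The first step is to show that these sets form the independent sets of a matroid $\mathcal M$ on $T$. Being a subset of a coverable set is closed under taking subsets, and $\emptyset$ is independent, so only the augmentation axiom needs proof. This is the \emph{Mader matroid}: the sets of terminals that can be covered by disjoint $\mathcal T$-paths form a matroid. I would cite it from~\cite{SchrijverPaths} (Section~73, in the same chapter as the form of Mader's theorem used in Lemma~\ref{thm:mader-transversal}).

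If a self-contained argument is preferred, I would use Gallai's reduction of disjoint-path problems to matchings. Build an auxiliary graph $F'$ in which each non-terminal vertex $v$ is replaced by an edge $v'v''$ carrying both copies of the edges at $v$. Each terminal is kept as a single vertex, and edges inside the same $T_i$ are deleted. One then checks that $X\subseteq T$ is coverable by a $\mathcal T$-packing exactly when some matching of $F'$ covers $X$ together with all doubled vertices not used by the paths, adjusted by the standard parity bookkeeping. The sets of vertices covered by matchings form the matching matroid, and its restriction to $T$ gives the matroid property. Verifying this correspondence carefully is the step I expect to be the main obstacle. The delicate points are that $\mathcal T$-paths must have endpoints in \emph{different} sets $T_i$, and that the internal vertices must avoid $T$. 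The cleanest route is probably to quote the Mader matroid theorem directly.

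With $\mathcal M$ available, let $r$ be its rank function. The hypothesis says exactly that
\[
 r\Bigl(\bigcup_{i\in I}T_i\Bigr)\ge b|I|\qquad\text{for every } I\subseteq\{1,\dots,N\}.
\]
Now consider the family $\mathcal F$ of $bN$ sets consisting of $b$ copies of each $T_i$. By Rado's theorem, $\mathcal F$ has an independent transversal (a system of distinct representatives forming an independent set of $\mathcal M$) if and only if every subfamily $\mathcal J\subseteq\mathcal F$ satisfies $r(\bigcup\mathcal J)\ge|\mathcal J|$. Suppose the sets in $\mathcal J$ are copies of the $T_i$ with $i\in I$. Then $|\mathcal J|\le b|I|$ and $\bigcup\mathcal J=\bigcup_{i\in I}T_i$, so the condition follows from the displayed inequality.

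Hence there is an independent set $X$ containing $b$ distinct vertices from each $T_i$. By the definition of independence, $X$ lies in the endpoint set of some $\mathcal T$-packing, and that packing has at least $b$ endpoints in every $T_i$, as required.
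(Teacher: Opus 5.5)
Your proposal is correct and matches the paper's proof: both take the Mader matroid on $\bigcup_i T_i$ (sets contained in the endpoint set of some $\mathcal T$-packing, with the exchange property cited from Schrijver) and apply Rado's theorem to the list of $b$ copies of each $T_i$, where the hypothesis supplies exactly the rank condition $r(\bigcup_{i\in I}T_i)\ge b|I|$. Your Gallai-reduction alternative is too loosely sketched to stand on its own, so citing the matroid theorem, as you suggest and as the paper does, is the right choice.
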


\begin{proof}
Let $\mathcal I$ consist of the subsets of
$\bigcup_{i=1}^N T_i$ that are contained in the endpoint
set of some $\mathcal T$-packing.
This family contains the empty set and is closed under
taking subsets.
Schrijver~\cite[p.~1]{SchrijverMaderMatroid} proved that
$\mathcal I$ also satisfies the following exchange property:
if $A,B\in\mathcal I$ and $|A|<|B|$, then
$A\cup\{v\}\in\mathcal I$ for some $v\in B\setminus A$.

For a family $\mathcal I$ with these properties,
Rado's theorem~\cite[Corollary~41.1c]{SchrijverMatroids}
gives distinct representatives of a list of sets whose
set belongs to $\mathcal I$, provided that the union
of every sublist contains a member of $\mathcal I$
of size at least the number of entries in that sublist.

Apply this theorem to a list containing $b$ copies
of each $T_i$.
A sublist involving precisely the indices in $I$
has at most $b|I|$ entries.
By the hypothesis, its union $\bigcup_{i\in I}T_i$
contains at least $b|I|$ endpoints of some
$\mathcal T$-packing, and hence a member of
$\mathcal I$ of size at least $b|I|$.
Rado's theorem therefore gives $b$ distinct
representatives from each $T_i$ whose union belongs
to $\mathcal I$.
By the definition of $\mathcal I$, one
$\mathcal T$-packing has all these representatives
among its endpoints.
\end{proof}

The proof of the next lemma follows the argument of
Segal and Verstra\"ete~\cite[Lemma~2.1]{SegalVerstraete2026},
with~\eqref{eq:component-condition} providing the
lower bound on transversal sizes.

\begin{lemma}\label{lem:ports}
Let $b\ge1$ be an integer, and let
$\mathcal T=\{T_1,\ldots,T_N\}$ be a family of disjoint vertex
sets in $G$, where $N\ge2$ and
$|T_i|^{1/8}\ge2(K+1)b$ for every $i$.
Then there is a $\mathcal T$-packing with at least $b$ endpoints
in every $T_i$.
\end{lemma}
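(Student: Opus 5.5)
The plan is to verify the hypothesis of Lemma~\ref{thm:endpoint-selection}: for every $I\subseteq\{1,\ldots,N\}$ we want a $\mathcal T$-packing with at least $b|I|$ endpoints in $\bigcup_{i\in I}T_i$. We obtain the packing by combining Lemma~\ref{thm:mader-transversal} with condition~\eqref{eq:component-condition}, used to bound transversal sizes from below. Two cases arise, according to whether $I$ is proper.

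\emph{Case $I\neq\{1,\ldots,N\}$ (the case $I=\emptyset$ is trivial).} Let $W=\bigcup_{j\notin I}T_j$, which is nonempty, and consider the family $\mathcal T_I=\{T_i:i\in I\}\cup\{W\}$. Every $\mathcal T_I$-path is a $\mathcal T$-path: its interior avoids all the $T_i$, and its endpoints lie in different original sets, because two endpoints in $W$ are not allowed. Moreover, every $\mathcal T_I$-path has at least one endpoint in $\bigcup_{i\in I}T_i$. So it suffices to find a $\mathcal T_I$-packing of size at least $b|I|$.

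Suppose instead that the maximum size is $\nu<b|I|$. Lemma~\ref{thm:mader-transversal} gives a $\mathcal T_I$-transversal $S$ with $|S|\le2\nu<2b|I|$. In $G-S$, no component meets two distinct members of $\mathcal T_I$: otherwise a shortest path between them inside that component would be a $\mathcal T_I$-path avoiding $S$.

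Fix some $j_0\notin I$ and look at the $|I|+1$ sets $T_i\setminus S$ ($i\in I$) and $T_{j_0}\setminus S$. These lie in pairwise distinct components of $G-S$, so at most one of them meets the largest component $X_1$. Choosing $|I|$ of these sets avoiding $X_1$ and applying~\eqref{eq:component-condition} gives
\[
\sum |T_i\setminus S|^{1/8}\le K|S|,
\]
where the sum runs over the chosen sets. Empty sets contribute $0$, and the inequality still holds because the sets sit in distinct non-largest components.

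Next we use subadditivity, $|T_i|^{1/8}\le|T_i\setminus S|^{1/8}+|T_i\cap S|^{1/8}\le|T_i\setminus S|^{1/8}+|T_i\cap S|$. Since the sets $T_i\cap S$ are disjoint, summing over the chosen sets gives
\[
|I|\cdot2(K+1)b\le(K+1)|S|<2(K+1)b|I|,
\]
a contradiction.

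\emph{Case $I=\{1,\ldots,N\}$.} Apply the same argument directly to $\mathcal T$. Here we need a packing with at least $bN$ endpoints, i.e.\ of size at least $bN/2$, since each path has two endpoints in $\bigcup T_i$. If not, we get a transversal with $|S|<bN$. Then $N-1$ of the sets $T_i\setminus S$ avoid $X_1$, and the same computation yields
\[
(N-1)\cdot 2(K+1)b\le(K+1)|S|<(K+1)bN.
\]
This contradicts $2(N-1)\ge N$, which holds because $N\ge2$.

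The step needing most care is the off-by-one issue caused by the excluded largest component $X_1$. The auxiliary merged set $W$ (respectively, the factor two from counting both endpoints when $I$ is everything) is exactly what absorbs the loss of one set to $X_1$. With both cases done, Lemma~\ref{thm:endpoint-selection} yields the required packing.
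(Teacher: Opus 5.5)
Your proof is correct and takes essentially the paper's route: you verify the hypothesis of Lemma~\ref{thm:endpoint-selection} using the merged family $\mathcal T_I$, Lemma~\ref{thm:mader-transversal}, and a lower bound on transversal sizes obtained from~\eqref{eq:component-condition} and subadditivity. The paper differs only in presentation, proving the bound $|Z|\ge 2b(m-1)$ once as a standalone inequality and applying it to both cases instead of arguing by contradiction. One wording point: a set $T_i\setminus S$ need not lie in a single component, so the bound $\sum|T_i\setminus S|^{1/8}\le K|S|$ needs subadditivity of $x\mapsto x^{1/8}$ over the pairwise disjoint families of non-$X_1$ components meeting the chosen sets, which is how the paper phrases it.
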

\begin{proof}
We first show that every $\mathcal A$-transversal has size
at least $2b(m-1)$, where
$\mathcal A=\{A_1,\ldots,A_m\}$ is any family of disjoint
vertex sets in $G$ with $m\ge2$ and
$|A_i|^{1/8}\ge2(K+1)b$ for every $i$.
Let $Z$ be an $\mathcal A$-transversal, and let $\mathcal C$
consist of the components of $G-Z$, with a largest component
omitted if one exists.
No component of $G-Z$ meets two different sets $A_i$:
otherwise, a shortest path in that component whose endpoints
lie in different sets $A_i$ would be an $\mathcal A$-path
avoiding $Z$.
We may therefore relabel the sets $A_i$ so that every
component meeting $A_1,\ldots,A_{m-1}$ belongs to $\mathcal C$.
By subadditivity,
\[
\begin{aligned}
 2(K+1)b(m-1)
 &\le \sum_{i=1}^{m-1}|A_i|^{1/8}
 \le \sum_{i=1}^{m-1}|A_i\cap Z|^{1/8}
      +\sum_{C\in\mathcal C}|C|^{1/8}\\
 &\le |Z|+K|Z|=(K+1)|Z|.
\end{aligned}
\]
The second inequality uses the fact that each component
meets at most one $A_i$.
For the last inequality, the sets $A_i\cap Z$ are disjoint
and satisfy $|A_i\cap Z|^{1/8}\le |A_i\cap Z|$, while
\eqref{eq:component-condition} bounds the sum over
$\mathcal C$ by $K|Z|$.
Dividing by $K+1$, we obtain
\begin{equation}\label{eq:terminal-transversal}
 |Z|\ge2b(m-1).
\end{equation}

We apply Lemma~\ref{thm:endpoint-selection} to $\mathcal T$
with parameter $b$ by verifying that, for every
$I\subseteq\{1,\ldots,N\}$, there is a $\mathcal T$-packing
with at least $b|I|$ endpoints in $\bigcup_{i\in I}T_i$.
First suppose that $I$ is a nonempty proper subset of $\{1,\ldots,N\}$, and let
\[
 \mathcal T_I
 =\{T_i:i\in I\}\cup
   \left\{\bigcup_{j\notin I}T_j\right\}.
\]
The $|I|+1$ members of $\mathcal T_I$ are disjoint and
each has size at least $[2(K+1)b]^8$.
By~\eqref{eq:terminal-transversal}, every
$\mathcal T_I$-transversal has size at least $2b|I|$.
Lemma~\ref{thm:mader-transversal} therefore gives a
$\mathcal T_I$-packing of at least $b|I|$ paths.
Every $\mathcal T_I$-path has its endpoints in different
sets $T_j$, at least one of them indexed by $I$, and
its internal vertices outside $\bigcup_{j=1}^N T_j$.
Thus this packing is also a $\mathcal T$-packing with
at least $b|I|$ endpoints in $\bigcup_{i\in I}T_i$.

For $I=\{1,\ldots,N\}$, applying
\eqref{eq:terminal-transversal} with $m=N$ and $A_i=T_i$
shows that every $\mathcal T$-transversal has size at
least $2b(N-1)$.
Lemma~\ref{thm:mader-transversal} then gives a
$\mathcal T$-packing of at least $b(N-1)$ paths,
and hence at least $2b(N-1)\ge bN$ endpoints.
The condition for $I=\emptyset$ is trivial, so
Lemma~\ref{thm:endpoint-selection} gives the required packing.
\end{proof}

\begin{proof}[\normalfont\bfseries Proof of Lemma~\ref{lem:prefix-bounded}]
Fix a family $\mathcal H$ of at least two disjoint cycles of
minimum length at least $x$ and total length $M$, and let
$s=\lceil x^{1/80}\rceil$.
We call a cycle in $\mathcal H$ \emph{short} if its length is
less than $2x$, and \emph{long} otherwise.
By choosing $x_0$ sufficiently large, we may assume $s\ge64$
and $x^{1/8}\ge2(K+1)s^8$ for every $x\ge x_0$.
Lemma~\ref{lem:ports}, applied to the vertex sets of the cycles in
$\mathcal H$ with $b=s^8$, gives a family $\mathcal P$ of
disjoint paths, each joining distinct cycles of $\mathcal H$
and having all internal vertices outside every cycle in
$\mathcal H$, with at least $s^8$ endpoints on every cycle.

Our aim is to construct disjoint cycles of length at least
$3x/2$ and total length at least $(1-13/s)M$.
Let $J$ be an auxiliary graph with vertex set $\mathcal H$,
where two cycles are adjacent if at least $s^6$ paths in
$\mathcal P$ join them, and let $\mathcal R$ consist of the
short cycles with no neighbor in $J$.
We first use cycles in a family $\mathcal B\subseteq\mathcal H$
to construct longer cycles using cycles in $\mathcal R\setminus\mathcal B$.
The following claim provides the required $\mathcal B$.

\begin{claim}\label{clm:choose-b}
There is a subfamily $\mathcal B\subseteq\mathcal H$ such that
\begin{equation}\label{eq:b-cost}
 \sum_{D\in\mathcal B}|D|+2x|\mathcal B|\le6M/s.
\end{equation}
Every short cycle outside $\mathcal B\cup\mathcal R$ has a neighbor in $J-\mathcal B$\footnote{Here $\mathcal B$ is a subset of $V(J)$.}, and for every
$D\in\mathcal R\setminus\mathcal B$, at least $s^7/2$ paths
in $\mathcal P$ join $D$ to cycles in $\mathcal B$ in total.
\end{claim}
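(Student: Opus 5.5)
The plan is to construct $\mathcal B$ by a random choice followed by a deterministic repair, with $p=1/s$ as the sampling probability. Three facts about the setup drive the calculation.
\begin{enumerate}[label=(\roman*)]
\item Every cycle of $\mathcal H$ has length at least $x$, so $x|\mathcal H|\le M$. Hence a set $\mathcal B_0$ containing each cycle independently with probability $p$ has
\[
\mathbb E\Big[\sum_{D\in\mathcal B_0}|D|+2x|\mathcal B_0|\Big]\le p(M+2M)=3M/s .
\]
This leaves a further budget of $3M/s$ for repairs in~\eqref{eq:b-cost}.
\item A cycle $D\in\mathcal R$ has at least $s^8$ endpoints of paths in $\mathcal P$. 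Since $D$ has no neighbour in $J$, fewer than $s^6$ of these paths go to any single other cycle.
\item Cycles of $\mathcal R$ are isolated in $J$. So adding a cycle of $\mathcal R$ to $\mathcal B$ never changes $J$-neighbourhoods of cycles outside $\mathcal R$, and can never break the first requirement of the claim.
\end{enumerate}

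First I would handle the structure of $J$ on the cycles outside $\mathcal R$. Let $J'$ be $J$ restricted to the short cycles outside $\mathcal R$ together with all their $J$-neighbours. Every short cycle not in $\mathcal R$ has a $J$-neighbour, so I would fix a spanning family of vertex-disjoint stars of $J'$ covering all short non-$\mathcal R$ cycles. Such a family exists: take a spanning forest and split it into stars in the standard way.

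The first requirement holds if, in each star, either the centre is outside $\mathcal B$, or the centre is inside $\mathcal B$ and every short leaf, and also the centre if it is short, has a non-$\mathcal B$ partner. The cleanest sufficient rule is that $\mathcal B$ never contains a star centre, and that for each star whose centre is short, at least one leaf is kept outside $\mathcal B$. So I would sample $\mathcal B_0$ only among non-centres, then delete one designated leaf per star from $\mathcal B_0$. Deletions cost nothing in~\eqref{eq:b-cost}.

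Next I would repair the second requirement. For $D\in\mathcal R$, let $X_D$ be the number of paths from $D$ to cycles in $\mathcal B_0$. By (ii), $X_D$ is a sum of independent contributions, each at most $s^6$. If at least $s^8\cdot\frac23$ of the paths leaving $D$ go to sampleable cycles, then $\mathbb E X_D\ge \tfrac23 s^7$. A Chernoff/Hoeffding bound for summands bounded by $s^6$ then gives $\Pr[X_D<s^7/2]\le e^{-cs}$, which is at most $1/s$ once $s\ge64$ (with $x_0$ adjusted if needed). Every failing $D$ is added to $\mathcal B$. By (iii) this is harmless for the first requirement. Since $D$ is short, each addition costs at most $3x\le 3|D|$, so the expected repair cost is at most $3M/s$. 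Choosing an outcome achieving at most the expectation then gives~\eqref{eq:b-cost}.

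The main obstacle is the step I glossed over: the paths from $D\in\mathcal R$ may mostly go to star centres or to designated leaves, which I excluded from sampling. I would handle this by randomising the star structure itself. For each star, I would pick the designated non-$\mathcal B$ vertices at random, e.g.\ orient each $K_2$-star randomly, and for larger stars use a random leaf. The aim is that every cycle outside $\mathcal R$ is excluded with probability at most $1/2$ and lies in $\mathcal B_0$ with probability at least $p/2$. This keeps $\mathbb E X_D\ge s^7\cdot\Omega(1)$, and the constants $1/2$ and $6$ are loose enough to absorb the loss. If centres of large stars cause trouble, since a large star's centre is excluded with probability one, I would instead use a decomposition into stars with at most two leaves, or accept a case distinction. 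Verifying that this randomised exclusion is consistent with the first requirement while keeping independence for the concentration bound is the part I expect to require the most care.
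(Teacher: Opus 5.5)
\paragraph{The gap.} The step you flagged as the main obstacle is a genuine gap, and none of the fixes you propose closes it. Your facts (i)--(iii) and the budget split $3M/s+3M/s$ are fine. The trouble is the rule that no star centre enters $\mathcal B$ and that one leaf is kept out whenever the centre is short. Under that rule some cycles are excluded from $\mathcal B$ with probability one, however you randomise the star structure. Two configurations show this.
\begin{itemize}
\item A long cycle that is the only $J$-neighbour of three or more short cycles must be the centre of a star with at least three leaves in every admissible decomposition. So stars with at most two leaves are not always available, and this centre is never sampled.
\item Two short cycles that are $J$-adjacent only to each other form a $K_2$-star. Whichever way you orient it, the centre is never in $\mathcal B$ and the unique leaf is kept out, so both cycles are always excluded.
\end{itemize}
Nothing you use prevents the $s^8$ paths of many $D\in\mathcal R$ from landing on such cycles; each $D$ only has to spread its paths over more than $s^2$ cycles. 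Then $X_D=0$ deterministically and every such $D$ must be added. The repair cost becomes of order $\sum_{D\in\mathcal R}|D|$, which can be a constant fraction of $M$: each $D$ has length at least $x\ge(s-1)^{80}$ but forces only $s^8$ endpoints onto other cycles.

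\paragraph{The missing idea.} You never need to keep a cycle out of $\mathcal B$; instead, let stranded cycles in.
\begin{itemize}
\item Sample every member of $\mathcal H$, would-be centres included, independently with probability $1/s$ to obtain $\mathcal B_0$. Then $\mathbb E X_D=s^7$ exactly, and Chernoff gives $\Pr(X_D<s^7/2)\le e^{-s/8}\le1/s$.
\item Add every failing $D\in\mathcal R\setminus\mathcal B_0$.
\item Also add every short cycle outside $\mathcal R\cup\mathcal B_0$ with no neighbour in $J-\mathcal B_0$. This happens only if a fixed $J$-neighbour $E$ of that cycle lies in $\mathcal B_0$, which has probability $1/s$.
\end{itemize}
Every added cycle is isolated in $J-\mathcal B_0$, so the repairs do not cascade. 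A short cycle outside $\mathcal B\cup\mathcal R$ has a neighbour in $J-\mathcal B_0$, and that neighbour cannot itself have been added, so it lies in $J-\mathcal B$. Each short cycle is subject to at most one repair rule and long cycles to none. Hence $\Pr(D\in\mathcal B)\le2/s$ for every $D\in\mathcal H$, and the expected cost is at most $\frac2s(M+2x|\mathcal H|)\le6M/s$. No star decomposition is needed at this stage.

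A minor slip: adding a short $D$ costs $|D|+2x<4x$, not at most $3x$. Your bound $|D|+2x\le3|D|$ still holds, and that is what your accounting actually uses.
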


\begin{proof}[\normalfont\bfseries Proof of Claim~\ref{clm:choose-b}]
For each $D\in\mathcal R$, choose $s^8$ paths in $\mathcal P$
with an endpoint on $D$. For $E\in\mathcal H\setminus\{D\}$,
let $a_{D,E}$ be the number of these $s^8$ chosen paths
that join $D$ to $E$.
Let $\mathcal B_0$ be obtained by selecting each member of
$\mathcal H$ independently with probability $1/s$.
For $D\in\mathcal R$, the number of chosen paths joining
$D$ to cycles in $\mathcal B_0$ is
\[
 X_D=\sum_{E\in\mathcal H\setminus\{D\}}
          a_{D,E}\mathbf 1_{\{E\in\mathcal B_0\}}.
\]
Let $\mathcal B$ be obtained from $\mathcal B_0$ by adding
the following cycles:
\begin{itemize}[leftmargin=*]
\item every $D\in\mathcal R\setminus\mathcal B_0$ with
$X_D<s^7/2$;
\item every short cycle outside $\mathcal R\cup\mathcal B_0$
with no neighbor in $J-\mathcal B_0$.
\end{itemize}
The first addition ensures the required paths connecting
$\mathcal R\setminus\mathcal B$ to $\mathcal B$.
By the second addition, every short cycle outside
$\mathcal B\cup\mathcal R$ has a neighbor in $J-\mathcal B_0$.
Every added cycle is isolated in $J-\mathcal B_0$, so
this neighbor remains in $J-\mathcal B$.
It therefore remains to prove~\eqref{eq:b-cost}.

We estimate the probability that a fixed cycle $D$ is added
to $\mathcal B_0$. The first rule applies only to cycles
in $\mathcal R$, and the second only to short cycles outside
$\mathcal R$, so each short cycle is subject to just one rule.
For $D\in\mathcal R$, we have $a_{D,E}<s^6$ for every $E$
by the definition of $J$, and $\sum_{E\ne D}a_{D,E}=s^8$.
Thus $X_D/s^6$ is a sum of independent random variables in
$[0,1]$ with expectation $s$. Chernoff's inequality gives
\[
 \Pr(D\in\mathcal B\setminus\mathcal B_0)
 \le \Pr(X_D<s^7/2)\le e^{-s/8}\le1/s.
\]
For a short cycle $D\notin\mathcal R$, fix a neighbor $E$
of $D$ in $J$.
The second rule adds $D$ only if all its neighbors lie in
$\mathcal B_0$, which in particular requires $E\in\mathcal B_0$.
Hence
$\Pr(D\in\mathcal B\setminus\mathcal B_0)
 \le\Pr(E\in\mathcal B_0)=1/s$.
Neither rule adds a long cycle, so
$\Pr(D\in\mathcal B\setminus\mathcal B_0)\le1/s$
for every $D\in\mathcal H$.
Since $\Pr(D\in\mathcal B_0)=1/s$, it follows that
$\Pr(D\in\mathcal B)\le2/s$.
By linearity of expectation,
\[
\begin{aligned}
 \mathbb E\left(\sum_{D\in\mathcal B}|D|
                         +2x|\mathcal B|\right)
 &=\sum_{D\in\mathcal H}(|D|+2x)\Pr(D\in\mathcal B)\\
 &\le \frac2s\bigl(M+2x|\mathcal H|\bigr)
 \le \frac{6M}{s},
\end{aligned}
\]
where the last inequality uses $x|\mathcal H|\le M$.
A choice of $\mathcal B$ satisfying~\eqref{eq:b-cost}
therefore exists.
\end{proof}

Fix $\mathcal B$ as in Claim~\ref{clm:choose-b}.
For each $D\in\mathcal R\setminus\mathcal B$, we construct
a path $L_D$ between distinct members of $\mathcal B$
containing at least $(1-2/s)|D|$ vertices of $D$.
In Figure~\ref{fig:rb-construction}, $B_1,\ldots,B_4$ belong to
$\mathcal B$, and the smaller cycles belong to $\mathcal R\setminus\mathcal B$.
We seek two paths from $D$ to distinct cycles in $\mathcal B$
whose endpoints on $D$ bound a short subpath.
To distinguish their destinations, for every path in $\mathcal P$ joining $D$ to a cycle
$E\in\mathcal B$, label its endpoint on $D$ by $E$.
There are at least $s^7/2$ labelled endpoints on $D$,
and each label occurs fewer than $s^6$ times.
In cyclic order along $D$, each maximal sequence of
consecutive endpoints with the same label therefore contains
fewer than $s^6$ endpoints. Hence at least $s/2$ pairs of
consecutive labelled endpoints have different labels.
Since the subpaths between these consecutive pairs have total length at most $|D|$, by the pigeonhole principle, one of these subpaths has length at most $2|D|/s$. The complementary subpath and the two corresponding
paths in $\mathcal P$ form the required path $L_D$;
see Figure~\ref{fig:rb-construction}(a)--(b).
The paths $L_D$ are disjoint and meet the cycles in
$\mathcal B$ only at their endpoints.

\begin{figure}[htbp]
\centering
\begingroup
\definecolor{cycleblue}{HTML}{236B8E}
\begin{tikzpicture}[x=.80cm,y=.80cm,line cap=round,line join=round,
 every node/.style={font=\small,inner sep=.8pt},
 candidate/.style={draw=black!32,line width=.28pt},
 ordinary/.style={draw=black,line width=.55pt},
 retained/.style={draw=black,line width=1.15pt},
 omitted/.style={draw=black!32,line width=.43pt,dash pattern=on 1.5pt off 1.25pt},
 context/.style={draw=black!24,line width=.43pt,dash pattern=on 1.5pt off 1.25pt}]
\path[use as bounding box] (-3.08,-3.35) rectangle (17.28,3.02);

\begin{scope}[shift={(0.00,0)}]
\draw[candidate] (0.59035,1.46125) .. controls (0.35775,2.12147) and (-1.56551,1.93095) .. (-1.13429,2.41731);
\draw[candidate] (0.59334,0.83771) .. controls (0.87604,0.25373) and (1.74065,-1.19994) .. (1.13429,-2.41731);
\draw[candidate] (0.37694,1.08270) .. controls (-0.77960,-0.01888) and (-2.14498,-1.40186) .. (-2.42742,-1.14495);
\draw[candidate] (-0.37947,0.53661) .. controls (0.28046,0.30315) and (1.57961,1.06893) .. (2.19788,0.86832);
\draw[candidate] (-0.67315,0.31106) .. controls (-0.83853,-0.05828) and (1.06533,-2.01400) .. (0.90393,-2.30716);
\draw[candidate] (-1.03275,0.58015) .. controls (-2.71459,0.34487) and (-1.99740,-0.99680) .. (-2.31530,-0.92985);
\draw[candidate] (-0.58077,-0.33159) .. controls (0.43871,0.98644) and (-1.00626,1.66826) .. (-0.78957,2.27706);
\draw[candidate] (-0.36063,-0.62935) .. controls (0.33808,-0.58684) and (1.46014,0.50643) .. (2.10533,0.42753);
\draw[candidate] (-1.00060,-0.49114) .. controls (-2.56570,-0.84663) and (-1.09026,-0.41592) .. (-2.21162,-0.32525);
\draw[candidate] (0.68356,-0.82041) .. controls (0.64867,-0.12128) and (-0.59727,1.58530) .. (-0.51728,2.23036);
\draw[candidate] (0.50880,-1.41897) .. controls (0.10323,-1.98950) and (-0.09387,-1.55083) .. (-0.08231,-2.20073);
\draw[candidate] (0.37694,-1.08270) .. controls (-0.66570,-0.19535) and (-1.44017,0.72235) .. (-2.21809,0.40358);
\draw[candidate] (-1.31135,0.23326) .. controls (-1.54473,1.37012) and (0.10767,1.19594) .. (-0.08231,2.20073);
\draw[candidate] (-1.20292,-0.04175) .. controls (-0.50973,-0.13917) and (1.48929,-0.70115) .. (2.12916,-0.58684);
\fill[black!36] (0.59035,1.46125) circle[radius=.017];
\fill[black!36] (-1.13429,2.41731) circle[radius=.017];
\fill[black!36] (0.59334,0.83771) circle[radius=.017];
\fill[black!36] (1.13429,-2.41731) circle[radius=.017];
\fill[black!36] (0.37694,1.08270) circle[radius=.017];
\fill[black!36] (-2.42742,-1.14495) circle[radius=.017];
\fill[black!36] (-0.37947,0.53661) circle[radius=.017];
\fill[black!36] (2.19788,0.86832) circle[radius=.017];
\fill[black!36] (-0.67315,0.31106) circle[radius=.017];
\fill[black!36] (0.90393,-2.30716) circle[radius=.017];
\fill[black!36] (-1.03275,0.58015) circle[radius=.017];
\fill[black!36] (-2.31530,-0.92985) circle[radius=.017];
\fill[black!36] (-0.58077,-0.33159) circle[radius=.017];
\fill[black!36] (-0.78957,2.27706) circle[radius=.017];
\fill[black!36] (-0.36063,-0.62935) circle[radius=.017];
\fill[black!36] (2.10533,0.42753) circle[radius=.017];
\fill[black!36] (-1.00060,-0.49114) circle[radius=.017];
\fill[black!36] (-2.21162,-0.32525) circle[radius=.017];
\fill[black!36] (0.68356,-0.82041) circle[radius=.017];
\fill[black!36] (-0.51728,2.23036) circle[radius=.017];
\fill[black!36] (0.50880,-1.41897) circle[radius=.017];
\fill[black!36] (-0.08231,-2.20073) circle[radius=.017];
\fill[black!36] (0.37694,-1.08270) circle[radius=.017];
\fill[black!36] (-2.21809,0.40358) circle[radius=.017];
\fill[black!36] (-1.31135,0.23326) circle[radius=.017];
\fill[black!36] (-0.08231,2.20073) circle[radius=.017];
\fill[black!36] (-1.20292,-0.04175) circle[radius=.017];
\fill[black!36] (2.12916,-0.58684) circle[radius=.017];
\draw[ordinary] (0,2.5) ellipse[x radius=1.18,y radius=0.3];
\draw[ordinary] (2.5,0) ellipse[x radius=0.42,y radius=1.25];
\draw[ordinary] (0,-2.5) ellipse[x radius=1.18,y radius=0.3];
\draw[ordinary] (-2.5,0) ellipse[x radius=0.3,y radius=1.18];
\draw[draw=white,line width=2.45pt] (0.81287,1.46010) .. controls (0.89000,1.77000) and (0.67000,2.02000) .. (0.59000,2.24019);
\draw[retained] (0.81287,1.46010) .. controls (0.89000,1.77000) and (0.67000,2.02000) .. (0.59000,2.24019);
\draw[draw=white,line width=2.45pt] (1.01010,1.26287) .. controls (1.45000,1.52000) and (1.93000,1.40000) .. (2.32250,1.13288);
\draw[retained] (1.01010,1.26287) .. controls (1.45000,1.52000) and (1.93000,1.40000) .. (2.32250,1.13288);
\draw[draw=white,line width=2.45pt] (-0.58371,0.96950) .. controls (-0.52000,1.44000) and (-0.66000,1.95000) .. (-0.59000,2.24019);
\draw[retained] (-0.58371,0.96950) .. controls (-0.52000,1.44000) and (-0.66000,1.95000) .. (-0.59000,2.24019);
\draw[draw=white,line width=2.45pt] (-0.38050,0.76629) .. controls (0.10000,0.55000) and (1.54000,0.62000) .. (2.13627,0.62500);
\draw[retained] (-0.38050,0.76629) .. controls (0.10000,0.55000) and (1.54000,0.62000) .. (2.13627,0.62500);
\draw[draw=white,line width=2.45pt] (-0.58371,-0.96950) .. controls (-0.52000,-1.44000) and (-0.66000,-1.95000) .. (-0.59000,-2.24019);
\draw[retained] (-0.58371,-0.96950) .. controls (-0.52000,-1.44000) and (-0.66000,-1.95000) .. (-0.59000,-2.24019);
\draw[draw=white,line width=2.45pt] (-0.38050,-0.76629) .. controls (0.10000,-0.55000) and (1.54000,-0.62000) .. (2.13627,-0.62500);
\draw[retained] (-0.38050,-0.76629) .. controls (0.10000,-0.55000) and (1.54000,-0.62000) .. (2.13627,-0.62500);
\draw[draw=white,line width=2.45pt] (0.81287,-1.46010) .. controls (0.89000,-1.77000) and (0.67000,-2.02000) .. (0.59000,-2.24019);
\draw[retained] (0.81287,-1.46010) .. controls (0.89000,-1.77000) and (0.67000,-2.02000) .. (0.59000,-2.24019);
\draw[draw=white,line width=2.45pt] (1.01010,-1.26287) .. controls (1.45000,-1.52000) and (1.93000,-1.40000) .. (2.32250,-1.13288);
\draw[retained] (1.01010,-1.26287) .. controls (1.45000,-1.52000) and (1.93000,-1.40000) .. (2.32250,-1.13288);
\draw[draw=white,line width=2.45pt] (-1.78191,-0.10261) .. controls (-2.00000,-0.23000) and (-2.17000,-0.46000) .. (-2.25425,-0.67682);
\draw[retained] (-1.78191,-0.10261) .. controls (-2.00000,-0.23000) and (-2.17000,-0.46000) .. (-2.25425,-0.67682);
\draw[draw=white,line width=2.45pt] (-1.60261,-0.28191) .. controls (-1.66000,-0.85000) and (-1.57000,-1.72000) .. (-1.06944,-2.37321);
\draw[retained] (-1.60261,-0.28191) .. controls (-1.66000,-0.85000) and (-1.57000,-1.72000) .. (-1.06944,-2.37321);
\fill[white] (0.7,1.15) circle[radius=0.33];
\draw[ordinary] (0.7,1.15) circle[radius=0.33];
\node[text=black,font=\small] at (0.7,1.15) {$D$};
\fill[black] (1.01010,1.26287) circle[radius=.030];
\fill[black] (0.81287,1.46010) circle[radius=.030];
\fill[white] (-0.7,0.65) circle[radius=0.34];
\draw[ordinary] (-0.7,0.65) circle[radius=0.34];
\fill[black] (-0.38050,0.76629) circle[radius=.030];
\fill[black] (-0.58371,0.96950) circle[radius=.030];
\fill[white] (-0.7,-0.65) circle[radius=0.34];
\draw[ordinary] (-0.7,-0.65) circle[radius=0.34];
\fill[black] (-0.58371,-0.96950) circle[radius=.030];
\fill[black] (-0.38050,-0.76629) circle[radius=.030];
\fill[white] (0.7,-1.15) circle[radius=0.33];
\draw[ordinary] (0.7,-1.15) circle[radius=0.33];
\fill[black] (0.81287,-1.46010) circle[radius=.030];
\fill[black] (1.01010,-1.26287) circle[radius=.030];
\fill[white] (-1.5,0) circle[radius=0.3];
\draw[ordinary] (-1.5,0) circle[radius=0.3];
\fill[black] (-1.78191,-0.10261) circle[radius=.030];
\fill[black] (-1.60261,-0.28191) circle[radius=.030];
\fill[black] (0.59000,2.24019) circle[radius=.033];
\fill[black] (2.32250,1.13288) circle[radius=.033];
\fill[black] (-0.59000,2.24019) circle[radius=.033];
\fill[black] (2.13627,0.62500) circle[radius=.033];
\fill[black] (-0.59000,-2.24019) circle[radius=.033];
\fill[black] (2.13627,-0.62500) circle[radius=.033];
\fill[black] (0.59000,-2.24019) circle[radius=.033];
\fill[black] (2.32250,-1.13288) circle[radius=.033];
\fill[black] (-2.25425,-0.67682) circle[radius=.033];
\fill[black] (-1.06944,-2.37321) circle[radius=.033];
\node[font=\small,fill=white,inner sep=.3pt] at (0,2.5) {$B_{1}$};
\node[font=\small,fill=white,inner sep=.3pt] at (2.5,0) {$B_{2}$};
\node[font=\small,fill=white,inner sep=.3pt] at (0,-2.5) {$B_{3}$};
\node[font=\small,fill=white,inner sep=.3pt] at (-2.5,0) {$B_{4}$};
\node[font=\small] at (0,-3.33) {(a)};
\end{scope}
\begin{scope}[shift={(7.10,0)}]
\draw[ordinary] (0,2.5) ellipse[x radius=1.18,y radius=0.3];
\draw[ordinary] (2.5,0) ellipse[x radius=0.42,y radius=1.25];
\draw[ordinary] (0,-2.5) ellipse[x radius=1.18,y radius=0.3];
\draw[ordinary] (-2.5,0) ellipse[x radius=0.3,y radius=1.18];
\draw[draw=white,line width=2.45pt] (0.81287,1.46010) .. controls (0.89000,1.77000) and (0.67000,2.02000) .. (0.59000,2.24019);
\draw[retained] (0.81287,1.46010) .. controls (0.89000,1.77000) and (0.67000,2.02000) .. (0.59000,2.24019);
\draw[draw=white,line width=2.45pt] (1.01010,1.26287) .. controls (1.45000,1.52000) and (1.93000,1.40000) .. (2.32250,1.13288);
\draw[retained] (1.01010,1.26287) .. controls (1.45000,1.52000) and (1.93000,1.40000) .. (2.32250,1.13288);
\draw[draw=white,line width=2.45pt] (-0.58371,0.96950) .. controls (-0.52000,1.44000) and (-0.66000,1.95000) .. (-0.59000,2.24019);
\draw[retained] (-0.58371,0.96950) .. controls (-0.52000,1.44000) and (-0.66000,1.95000) .. (-0.59000,2.24019);
\draw[draw=white,line width=2.45pt] (-0.38050,0.76629) .. controls (0.10000,0.55000) and (1.54000,0.62000) .. (2.13627,0.62500);
\draw[retained] (-0.38050,0.76629) .. controls (0.10000,0.55000) and (1.54000,0.62000) .. (2.13627,0.62500);
\draw[draw=white,line width=2.45pt] (-0.58371,-0.96950) .. controls (-0.52000,-1.44000) and (-0.66000,-1.95000) .. (-0.59000,-2.24019);
\draw[retained] (-0.58371,-0.96950) .. controls (-0.52000,-1.44000) and (-0.66000,-1.95000) .. (-0.59000,-2.24019);
\draw[draw=white,line width=2.45pt] (-0.38050,-0.76629) .. controls (0.10000,-0.55000) and (1.54000,-0.62000) .. (2.13627,-0.62500);
\draw[retained] (-0.38050,-0.76629) .. controls (0.10000,-0.55000) and (1.54000,-0.62000) .. (2.13627,-0.62500);
\draw[draw=white,line width=2.45pt] (0.81287,-1.46010) .. controls (0.89000,-1.77000) and (0.67000,-2.02000) .. (0.59000,-2.24019);
\draw[retained] (0.81287,-1.46010) .. controls (0.89000,-1.77000) and (0.67000,-2.02000) .. (0.59000,-2.24019);
\draw[draw=white,line width=2.45pt] (1.01010,-1.26287) .. controls (1.45000,-1.52000) and (1.93000,-1.40000) .. (2.32250,-1.13288);
\draw[retained] (1.01010,-1.26287) .. controls (1.45000,-1.52000) and (1.93000,-1.40000) .. (2.32250,-1.13288);
\draw[draw=white,line width=2.45pt] (-1.78191,-0.10261) .. controls (-2.00000,-0.23000) and (-2.17000,-0.46000) .. (-2.25425,-0.67682);
\draw[retained] (-1.78191,-0.10261) .. controls (-2.00000,-0.23000) and (-2.17000,-0.46000) .. (-2.25425,-0.67682);
\draw[draw=white,line width=2.45pt] (-1.60261,-0.28191) .. controls (-1.66000,-0.85000) and (-1.57000,-1.72000) .. (-1.06944,-2.37321);
\draw[retained] (-1.60261,-0.28191) .. controls (-1.66000,-0.85000) and (-1.57000,-1.72000) .. (-1.06944,-2.37321);
\fill[white] (0.7,1.15) circle[radius=0.33];
\draw[retained] (0.81287,1.46010) arc[start angle=70,end angle=380,radius=0.33];
\draw[omitted] (1.01010,1.26287) arc[start angle=20,end angle=70,radius=0.33];
\node[text=black,font=\small] at (0.7,1.15) {$L_D$};
\fill[black] (1.01010,1.26287) circle[radius=.030];
\fill[black] (0.81287,1.46010) circle[radius=.030];
\fill[white] (-0.7,0.65) circle[radius=0.34];
\draw[retained] (-0.58371,0.96950) arc[start angle=70,end angle=380,radius=0.34];
\draw[omitted] (-0.38050,0.76629) arc[start angle=20,end angle=70,radius=0.34];
\fill[black] (-0.38050,0.76629) circle[radius=.030];
\fill[black] (-0.58371,0.96950) circle[radius=.030];
\fill[white] (-0.7,-0.65) circle[radius=0.34];
\draw[retained] (-0.38050,-0.76629) arc[start angle=340,end angle=650,radius=0.34];
\draw[omitted] (-0.58371,-0.96950) arc[start angle=290,end angle=340,radius=0.34];
\fill[black] (-0.58371,-0.96950) circle[radius=.030];
\fill[black] (-0.38050,-0.76629) circle[radius=.030];
\fill[white] (0.7,-1.15) circle[radius=0.33];
\draw[retained] (1.01010,-1.26287) arc[start angle=340,end angle=650,radius=0.33];
\draw[omitted] (0.81287,-1.46010) arc[start angle=290,end angle=340,radius=0.33];
\fill[black] (0.81287,-1.46010) circle[radius=.030];
\fill[black] (1.01010,-1.26287) circle[radius=.030];
\fill[white] (-1.5,0) circle[radius=0.3];
\draw[retained] (-1.60261,-0.28191) arc[start angle=250,end angle=560,radius=0.3];
\draw[omitted] (-1.78191,-0.10261) arc[start angle=200,end angle=250,radius=0.3];
\fill[black] (-1.78191,-0.10261) circle[radius=.030];
\fill[black] (-1.60261,-0.28191) circle[radius=.030];
\fill[black] (0.59000,2.24019) circle[radius=.033];
\fill[black] (2.32250,1.13288) circle[radius=.033];
\fill[black] (-0.59000,2.24019) circle[radius=.033];
\fill[black] (2.13627,0.62500) circle[radius=.033];
\fill[black] (-0.59000,-2.24019) circle[radius=.033];
\fill[black] (2.13627,-0.62500) circle[radius=.033];
\fill[black] (0.59000,-2.24019) circle[radius=.033];
\fill[black] (2.32250,-1.13288) circle[radius=.033];
\fill[black] (-2.25425,-0.67682) circle[radius=.033];
\fill[black] (-1.06944,-2.37321) circle[radius=.033];
\node[font=\small,fill=white,inner sep=.3pt] at (0,2.5) {$B_{1}$};
\node[font=\small,fill=white,inner sep=.3pt] at (2.5,0) {$B_{2}$};
\node[font=\small,fill=white,inner sep=.3pt] at (0,-2.5) {$B_{3}$};
\node[font=\small,fill=white,inner sep=.3pt] at (-2.5,0) {$B_{4}$};
\node[font=\small] at (0,-3.33) {(b)};
\end{scope}
\begin{scope}[shift={(14.20,0)}]
\draw[context] (0,2.5) ellipse[x radius=1.18,y radius=0.3];
\draw[context] (2.5,0) ellipse[x radius=0.42,y radius=1.25];
\draw[context] (0,-2.5) ellipse[x radius=1.18,y radius=0.3];
\draw[context] (-2.5,0) ellipse[x radius=0.3,y radius=1.18];
\draw[draw=white,line width=2.45pt] (0.81287,1.46010) .. controls (0.89000,1.77000) and (0.67000,2.02000) .. (0.59000,2.24019);
\draw[retained,draw=cycleblue] (0.81287,1.46010) .. controls (0.89000,1.77000) and (0.67000,2.02000) .. (0.59000,2.24019);
\draw[draw=white,line width=2.45pt] (1.01010,1.26287) .. controls (1.45000,1.52000) and (1.93000,1.40000) .. (2.32250,1.13288);
\draw[retained,draw=cycleblue] (1.01010,1.26287) .. controls (1.45000,1.52000) and (1.93000,1.40000) .. (2.32250,1.13288);
\draw[draw=white,line width=2.45pt] (-0.58371,0.96950) .. controls (-0.52000,1.44000) and (-0.66000,1.95000) .. (-0.59000,2.24019);
\draw[retained,draw=cycleblue] (-0.58371,0.96950) .. controls (-0.52000,1.44000) and (-0.66000,1.95000) .. (-0.59000,2.24019);
\draw[draw=white,line width=2.45pt] (-0.38050,0.76629) .. controls (0.10000,0.55000) and (1.54000,0.62000) .. (2.13627,0.62500);
\draw[retained,draw=cycleblue] (-0.38050,0.76629) .. controls (0.10000,0.55000) and (1.54000,0.62000) .. (2.13627,0.62500);
\draw[draw=white,line width=2.45pt] (-0.58371,-0.96950) .. controls (-0.52000,-1.44000) and (-0.66000,-1.95000) .. (-0.59000,-2.24019);
\draw[retained,draw=cycleblue] (-0.58371,-0.96950) .. controls (-0.52000,-1.44000) and (-0.66000,-1.95000) .. (-0.59000,-2.24019);
\draw[draw=white,line width=2.45pt] (-0.38050,-0.76629) .. controls (0.10000,-0.55000) and (1.54000,-0.62000) .. (2.13627,-0.62500);
\draw[retained,draw=cycleblue] (-0.38050,-0.76629) .. controls (0.10000,-0.55000) and (1.54000,-0.62000) .. (2.13627,-0.62500);
\draw[draw=white,line width=2.45pt] (0.81287,-1.46010) .. controls (0.89000,-1.77000) and (0.67000,-2.02000) .. (0.59000,-2.24019);
\draw[retained,draw=cycleblue] (0.81287,-1.46010) .. controls (0.89000,-1.77000) and (0.67000,-2.02000) .. (0.59000,-2.24019);
\draw[draw=white,line width=2.45pt] (1.01010,-1.26287) .. controls (1.45000,-1.52000) and (1.93000,-1.40000) .. (2.32250,-1.13288);
\draw[retained,draw=cycleblue] (1.01010,-1.26287) .. controls (1.45000,-1.52000) and (1.93000,-1.40000) .. (2.32250,-1.13288);
\draw[omitted] (-1.78191,-0.10261) .. controls (-2.00000,-0.23000) and (-2.17000,-0.46000) .. (-2.25425,-0.67682);
\draw[omitted] (-1.60261,-0.28191) .. controls (-1.66000,-0.85000) and (-1.57000,-1.72000) .. (-1.06944,-2.37321);
\fill[white] (0.7,1.15) circle[radius=0.33];
\draw[retained,draw=cycleblue] (0.81287,1.46010) arc[start angle=70,end angle=380,radius=0.33];
\draw[omitted] (1.01010,1.26287) arc[start angle=20,end angle=70,radius=0.33];
\node[text=cycleblue,font=\small] at (0.7,1.15) {$L_D$};
\fill[cycleblue] (1.01010,1.26287) circle[radius=.030];
\fill[cycleblue] (0.81287,1.46010) circle[radius=.030];
\fill[white] (-0.7,0.65) circle[radius=0.34];
\draw[retained,draw=cycleblue] (-0.58371,0.96950) arc[start angle=70,end angle=380,radius=0.34];
\draw[omitted] (-0.38050,0.76629) arc[start angle=20,end angle=70,radius=0.34];
\fill[cycleblue] (-0.38050,0.76629) circle[radius=.030];
\fill[cycleblue] (-0.58371,0.96950) circle[radius=.030];
\fill[white] (-0.7,-0.65) circle[radius=0.34];
\draw[retained,draw=cycleblue] (-0.38050,-0.76629) arc[start angle=340,end angle=650,radius=0.34];
\draw[omitted] (-0.58371,-0.96950) arc[start angle=290,end angle=340,radius=0.34];
\fill[cycleblue] (-0.58371,-0.96950) circle[radius=.030];
\fill[cycleblue] (-0.38050,-0.76629) circle[radius=.030];
\fill[white] (0.7,-1.15) circle[radius=0.33];
\draw[retained,draw=cycleblue] (1.01010,-1.26287) arc[start angle=340,end angle=650,radius=0.33];
\draw[omitted] (0.81287,-1.46010) arc[start angle=290,end angle=340,radius=0.33];
\fill[cycleblue] (0.81287,-1.46010) circle[radius=.030];
\fill[cycleblue] (1.01010,-1.26287) circle[radius=.030];
\fill[white] (-1.5,0) circle[radius=0.3];
\draw[omitted] (-1.60261,-0.28191) arc[start angle=250,end angle=560,radius=0.3];
\draw[omitted] (-1.78191,-0.10261) arc[start angle=200,end angle=250,radius=0.3];
\fill[black!35] (-1.78191,-0.10261) circle[radius=.030];
\fill[black!35] (-1.60261,-0.28191) circle[radius=.030];
\draw[retained,draw=cycleblue] (-0.59000,2.24019) arc[start angle=240,end angle=300,x radius=1.18,y radius=0.3];
\draw[retained,draw=cycleblue] (2.32250,1.13288) arc[start angle=115,end angle=150,x radius=0.42,y radius=1.25];
\draw[retained,draw=cycleblue] (2.13627,-0.62500) arc[start angle=210,end angle=245,x radius=0.42,y radius=1.25];
\draw[retained,draw=cycleblue] (0.59000,-2.24019) arc[start angle=60,end angle=120,x radius=1.18,y radius=0.3];
\fill[cycleblue] (0.59000,2.24019) circle[radius=.033];
\fill[cycleblue] (2.32250,1.13288) circle[radius=.033];
\fill[cycleblue] (-0.59000,2.24019) circle[radius=.033];
\fill[cycleblue] (2.13627,0.62500) circle[radius=.033];
\fill[cycleblue] (-0.59000,-2.24019) circle[radius=.033];
\fill[cycleblue] (2.13627,-0.62500) circle[radius=.033];
\fill[cycleblue] (0.59000,-2.24019) circle[radius=.033];
\fill[cycleblue] (2.32250,-1.13288) circle[radius=.033];
\fill[black!35] (-2.25425,-0.67682) circle[radius=.033];
\fill[black!35] (-1.06944,-2.37321) circle[radius=.033];
\node[font=\small,fill=white,inner sep=.3pt] at (0,2.5) {$B_{1}$};
\node[font=\small,fill=white,inner sep=.3pt] at (2.5,0) {$B_{2}$};
\node[font=\small,fill=white,inner sep=.3pt] at (0,-2.5) {$B_{3}$};
\node[font=\small,fill=white,inner sep=.3pt] at (-2.5,0) {$B_{4}$};
\node[font=\scriptsize,fill=white,inner sep=.3pt] at (2.5,1.09) {$v_{1}$};
\node[font=\scriptsize,fill=white,inner sep=.3pt] at (2.45,0.625) {$v_{2}$};
\node[font=\scriptsize,fill=white,inner sep=.3pt] at (2.45,-0.625) {$v_{3}$};
\node[font=\scriptsize,fill=white,inner sep=.3pt] at (2.5,-1.09) {$v_{4}$};
\node[font=\small] at (0,-3.33) {(c)};
\end{scope}
\draw[-{Latex[length=1.65mm,width=1.25mm]},line width=.65pt] (3.02,0)--(4.08,0);
\draw[-{Latex[length=1.65mm,width=1.25mm]},line width=.65pt] (10.12,0)--(11.18,0);
\end{tikzpicture}
\endgroup
\caption{Constructing disjoint cycles from $\mathcal B\cup\mathcal R$.}
\label{fig:rb-construction}
\end{figure}

We now construct disjoint cycles of length at least $3x/2$
from the paths $L_D$ and subpaths of cycles in $\mathcal B$.
Let $F$ be an auxiliary multigraph with vertex set $\mathcal B$.
The edges of $F$ are defined as follows: for each path $L_D$,
add one edge between the two members of $\mathcal B$
containing the endpoints of $L_D$.
Choose an inclusion-maximal spanning subgraph $F'$ of $F$
in which every degree is even.
By maximality, the edges in $E(F)\setminus E(F')$ contain no cycle, and hence $|E(F')|\geq |E(F)|-|\mathcal B|$.
Let $\mathcal L$ be the family of paths $L_D$ corresponding
to the edges of $F'$.
For each $C\in\mathcal B$, list the endpoints on $C$
of paths in $\mathcal L$ as $v_1,\ldots,v_{2r}$ in cyclic
order, where $C$ has degree $2r$ in $F'$.
For $1\le j\le r$, take the subpath of $C$ from
$v_{2j-1}$ to $v_{2j}$ following this cyclic order.
The union of these subpaths over all $C\in\mathcal B$
and the paths in $\mathcal L$ has degree two at every
vertex, so it is a family of disjoint cycles;
see Figure~\ref{fig:rb-construction}(c).
Each resulting cycle contains at least two members
of $\mathcal L$, since every path in $\mathcal L$
joins distinct members of $\mathcal B$.
Its length is therefore at least $2(1-2/s)x\ge3x/2$.
We now bound the number of vertices on cycles in
$\mathcal B\cup\mathcal R$ not covered by the constructed cycles.
At most $|\mathcal B|$ cycles $D\in\mathcal R\setminus\mathcal B$
satisfy $L_D\notin\mathcal L$, and each has length less than $2x$.
Hence the number of uncovered vertices on these cycles $D$
and on cycles in $\mathcal B$ is at most
$\sum_{D\in\mathcal B}|D|+2x|\mathcal B|\le6M/s$
by~\eqref{eq:b-cost}.
For each $D\in\mathcal R\setminus\mathcal B$ with
$L_D\in\mathcal L$, at most $2|D|/s$ vertices of $D$
are omitted in constructing $L_D$.
Thus the number of uncovered vertices on cycles in
$\mathcal B\cup\mathcal R$ is at most
\[
 \frac{6M}{s}
 +\frac2s\sum_{D\in\mathcal R\setminus\mathcal B}|D|.
\]
This completes the construction of disjoint cycles of
length at least $3x/2$ from $\mathcal B\cup\mathcal R$.

\medskip

We then consider the short cycles outside $\mathcal B\cup\mathcal R$.
By Claim~\ref{clm:choose-b}, every such cycle has a neighbor in
$J-(\mathcal B\cup\mathcal R)$.
We may therefore choose disjoint stars of order at least two
in this graph that cover all these short cycles and whose
leaves are short cycles.
Fix one star, with center $Q$ and leaves $D_1,\ldots,D_t$\footnote{All the leaves are short cycles, while the center is possibly long.},
where $t\ge1$.
Write $M_Q=|Q|+\sum_{i=1}^t|D_i|$ for the total length
of the cycles in this star.
For each $i$, let $\mathcal P_i$ consist of exactly $s^6$
paths in $\mathcal P$ from $Q$ to $D_i$.
After deleting one edge of $Q$, we partition the resulting
path into a family $\mathcal Q$ of $t$ disjoint subpaths,
each containing exactly $s^6$ endpoints of the paths in
$\bigcup_{i=1}^t\mathcal P_i$.
These subpaths cover all vertices of $Q$.

Let $\mathcal D$ consist of the cycles $D_i$ for which at least $s^4$
paths in $\mathcal P_i$ end on one member of $\mathcal Q$,
and for each $D_i\in\mathcal D$, fix such a member $Q_i$.
For $P\in\mathcal Q$, let $\mathcal D_P=\{D_i\in\mathcal D:Q_i=P\}$.
Since $P$ contains $s^6$ endpoints in total, $|\mathcal D_P|\le s^2$.
The next claim inserts the cycles in $\mathcal D_P$.

\begin{claim}\label{clm:insert-path}
For every $P\in\mathcal Q$, the union of $P$, the cycles
in $\mathcal D_P$, and the paths in
$\bigcup_{D_i\in\mathcal D_P}\mathcal P_i$ ending on $P$ contains
a path $P'$ with the same endpoints as $P$ and at least
a $(1-2/s)$ fraction of the vertices of
$P\cup\big(\bigcup_{D\in\mathcal D_P}D\big)$.
\end{claim}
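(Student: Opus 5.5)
We will insert the cycles of $\mathcal D_P$ into $P$ one at a time, each by a detour along a short stretch of $P$. Write $\mathcal D_P=\{D_{i_1},\ldots,D_{i_k}\}$ with $k\le s^2$. For each $D_{i_j}$ we have at least $s^4$ paths of $\mathcal P_{i_j}$ with an endpoint on $P$. Since the paths of $\mathcal P$ are disjoint and internally avoid all cycles of $\mathcal H$, the endpoints on $P$ of all these paths are distinct. They are also distinct from the endpoints of $P$ after a harmless trimming, or we simply discard any path ending at an endpoint of $P$, which costs at most two paths.

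\textbf{Step 1: disjoint windows on $P$.} Orient $P$ from one endpoint to the other. We want to assign to each $D_{i_j}$ two paths $R_j,R_j'\in\mathcal P_{i_j}$ whose endpoints $p_j,p_j'$ on $P$ bound a subpath $W_j$ of $P$ (a \emph{window}). The windows should be pairwise disjoint and contain no endpoint of $P$. A pair of consecutive endpoints (in the order along $P$) belonging to the same $D_{i_j}$ bounds a window whose interior contains no other endpoint, so windows from distinct such pairs are internally disjoint. We would greedily pick such pairs, using every other pair so that windows do not share vertices.

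To guarantee such pairs exist, we thin the endpoint set. The multiset of labels along $P$ has at least $s^4$ occurrences of each of at most $s^2$ labels. Contract maximal runs of equal labels; there are two sub-cases.
\begin{itemize}
\item If some label has many runs, each run of length at least two already supplies a consecutive same-label pair.
\item If the labels interleave heavily, consecutive same-label pairs need not exist. In that case we allow the window of $D_{i_j}$ to contain endpoints of other labels, and we then ignore those other paths.
\end{itemize}
So the real requirement is only that the windows be disjoint and short. Since the total number of endpoints on $P$ is at most $s^6$, and each $D_{i_j}$ has $s^4$ endpoints, we can choose for each $D_{i_j}$ a pair of its own endpoints at most $s^2$ positions apart in its own ordering. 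We then choose among the $\sim s^2$ disjoint candidate windows one that is short and avoids the others by a greedy/pigeonhole argument. This is the step I expect to be the main obstacle: simultaneously ensuring disjointness across up to $s^2$ cycles and shortness of the windows. I would process cycles in order and use that, for each cycle, its $s^4$ endpoints split $P$ into about $s^4/s^2=s^2$ disjoint candidate windows. At most $k\le s^2$ of these are blocked by previously chosen windows, so choosing by pigeonhole among the unblocked ones (say half of them) gives total length of chosen windows at most $O(|P|\cdot k/s^2\cdot\ldots)$. The intended bound is that $\sum_j|W_j|\le|P|/s$. Candidate windows of one cycle are disjoint, so at least $s^2/2$ of them have length at most $2|P|/s^2$, and a cheap unblocked one exists. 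Summing over $k\le s^2$ cycles gives about $2|P|$, which is too much. I would therefore refine the choice: use $s^3$ candidate windows per cycle, giving length $\le 2|P|/s^3$ each and total $\le 2|P|/s$.

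\textbf{Step 2: replace windows by detours.} For each $j$, let $e_j$ be an edge of $D_{i_j}$ chosen so that it suits the endpoints $q_j,q_j'$ of $R_j,R_j'$ on $D_{i_j}$. Take the longer of the two $(q_j,q_j')$-arcs of $D_{i_j}$. This arc misses at most a $(1-2/s)$-complement only if $q_j,q_j'$ are close on $D_{i_j}$, which we do not control. So instead choose, among the $s^4$ paths of $\mathcal P_{i_j}$ ending on $P$, the pair to also have endpoints within distance $2|D_{i_j}|/s$ on $D_{i_j}$. This is again pigeonhole on the cyclic order of the $s^4$ endpoints on $D_{i_j}$: there are $\ge s^4/2$ consecutive pairs of gap $\le 2|D|/s^3\cdot\ldots$. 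We must combine it with Step 1, and I would handle this by first selecting a subfamily of $s^3$ paths that are close in both orders, via the Erd\H{o}s--Szekeres-type pigeonhole of splitting $D_{i_j}$ into $s$ arcs and $P$ into arcs.

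Finally, $P'$ is $P$ with each $W_j$ replaced by $R_j$, the long arc of $D_{i_j}$, and $R_j'$. It has the same endpoints as $P$ and is a path by disjointness. It misses at most $\sum_j|W_j|+\sum_j 2|D_{i_j}|/s\le \frac2s|P\cup\bigcup D|$, as required.
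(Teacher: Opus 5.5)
There is a genuine gap. Your overall plan matches the paper's: short disjoint windows on $P$ and one detour per cycle of $\mathcal D_P$. However, the two steps you flag as delicate are not actually resolved.

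\textbf{Disjointness of the windows.} You process the cycles in a fixed order and assert that at most $k\le s^2$ of the current cycle's candidate windows are blocked. This assumes each earlier window meets at most one of them, which is false in general. The candidate windows of $D_{i_j}$ are disjoint but may be very short. Nothing prevents many, or even all, of them from lying inside a single previously chosen window of up to $2|P|/s^3$ vertices. The missing ingredient is an interval-scheduling greedy run across all families at once. At each stage, among all remaining short candidates of all unprocessed cycles, pick the one whose last vertex comes first along $P$. Any candidate of another family that meets it must contain that last vertex. Since each family's candidates are pairwise disjoint, each stage destroys at most one candidate per remaining family. With at least $s^3/2\ge s^2\ge|\mathcal D_P|$ short candidates per family, every stage succeeds.

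\textbf{Closeness on $P$ versus closeness on $D_{i_j}$.} You use the window endpoints themselves as the detour pair $R_j,R_j'$. You then need them close on $D_{i_j}$ as well, and defer this to an unspecified Erd\H{o}s--Szekeres-type selection. No simultaneous control is needed.
\begin{itemize}
\item Define each candidate window as the span on $P$ of $s$ consecutive (along $P$) endpoints of the $s^4$ chosen paths of $\mathcal P_{i_j}$. This gives $s^3$ disjoint candidates, exactly your refined count.
\item Once a window is fixed, the $s$ paths ending in it have $s$ distinct endpoints on $D_{i_j}$. By pigeonhole, two of them are consecutive on $D_{i_j}$ and bound a subpath of length at most $|D_{i_j}|/s$.
\item The stretch of $P$ between their endpoints on $P$ lies inside the window, which has at most $2|P|/s^3$ vertices. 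So the detour's cost on $P$ is already bounded by the window length.
\item Replace that stretch by the two paths and the complementary arc of $D_{i_j}$. Over at most $s^2$ windows this loses at most $2|P|/s$ vertices of $P$, and at most $|D_{i_j}|/s$ vertices of each inserted cycle. This gives the claim.
\end{itemize}

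Your precaution about paths ending at an endpoint of $P$ is unnecessary. The replacement keeps both ends of the replaced stretch, so $P'$ automatically has the same endpoints as $P$.
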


\begin{proof}[\normalfont\bfseries Proof of Claim~\ref{clm:insert-path}]
If $\mathcal D_P=\emptyset$, take $P'=P$.
Otherwise, order the vertices of $P$ from one endpoint to the other.
We first choose disjoint subpaths $I_i$ of $P$, one for each
$D_i\in\mathcal D_P$, such that $I_i$ has at most $2|P|/s^3$ vertices
and contains the endpoints of $s$ paths in $\mathcal P_i$;
see Figure~\ref{fig:interval-insertion}.

\usetikzlibrary{decorations.pathreplacing}
\begin{figure}[htbp]
\centering
\begingroup
\begin{tikzpicture}[x=.98cm,y=.98cm,line cap=round,line join=round,
 every node/.style={font=\small,inner sep=1pt},
 original/.style={line width=.65pt},
 retained/.style={line width=1.35pt},
 discarded/.style={line width=.45pt,dash pattern=on 1.7pt off 1.4pt}]

\begin{scope}[shift={(0,0)}]
\draw[retained] (0,0)--(12,0);
\node[anchor=east] at (-.15,0) {$P$};
\draw[original] (2,1.28) circle[radius=.57];
\node at (2,1.35) {$D_1$};
\draw[original] (1.2,0)--({2+.57*cos(240)},{1.28+.57*sin(240)});
\draw[original] (1.6,0)--({2+.57*cos(210)},{1.28+.57*sin(210)});
\draw[original] (2,0)--({2+.57*cos(300)},{1.28+.57*sin(300)});
\draw[original] (2.4,0)--({2+.57*cos(270)},{1.28+.57*sin(270)});
\draw[original] (2.8,0)--({2+.57*cos(330)},{1.28+.57*sin(330)});
\fill (1.2,0) circle[radius=1.25pt];
\fill ({2+.57*cos(240)},{1.28+.57*sin(240)}) circle[radius=1.25pt];
\fill (1.6,0) circle[radius=1.25pt];
\fill ({2+.57*cos(210)},{1.28+.57*sin(210)}) circle[radius=1.25pt];
\fill (2,0) circle[radius=1.25pt];
\fill ({2+.57*cos(300)},{1.28+.57*sin(300)}) circle[radius=1.25pt];
\fill (2.4,0) circle[radius=1.25pt];
\fill ({2+.57*cos(270)},{1.28+.57*sin(270)}) circle[radius=1.25pt];
\fill (2.8,0) circle[radius=1.25pt];
\fill ({2+.57*cos(330)},{1.28+.57*sin(330)}) circle[radius=1.25pt];
\draw[decorate,decoration={brace,mirror,amplitude=3pt},line width=.45pt] (1.2,-.13)--(2.8,-.13) node[midway,below=4pt] {$I_1$};
\draw[original] (6,1.28) circle[radius=.57];
\node at (6,1.35) {$D_2$};
\draw[original] (5.2,0)--({6+.57*cos(210)},{1.28+.57*sin(210)});
\draw[original] (5.6,0)--({6+.57*cos(270)},{1.28+.57*sin(270)});
\draw[original] (6,0)--({6+.57*cos(240)},{1.28+.57*sin(240)});
\draw[original] (6.4,0)--({6+.57*cos(330)},{1.28+.57*sin(330)});
\draw[original] (6.8,0)--({6+.57*cos(300)},{1.28+.57*sin(300)});
\fill (5.2,0) circle[radius=1.25pt];
\fill ({6+.57*cos(210)},{1.28+.57*sin(210)}) circle[radius=1.25pt];
\fill (5.6,0) circle[radius=1.25pt];
\fill ({6+.57*cos(270)},{1.28+.57*sin(270)}) circle[radius=1.25pt];
\fill (6,0) circle[radius=1.25pt];
\fill ({6+.57*cos(240)},{1.28+.57*sin(240)}) circle[radius=1.25pt];
\fill (6.4,0) circle[radius=1.25pt];
\fill ({6+.57*cos(330)},{1.28+.57*sin(330)}) circle[radius=1.25pt];
\fill (6.8,0) circle[radius=1.25pt];
\fill ({6+.57*cos(300)},{1.28+.57*sin(300)}) circle[radius=1.25pt];
\draw[decorate,decoration={brace,mirror,amplitude=3pt},line width=.45pt] (5.2,-.13)--(6.8,-.13) node[midway,below=4pt] {$I_2$};
\draw[original] (10,1.28) circle[radius=.57];
\node at (10,1.35) {$D_3$};
\draw[original] (9.2,0)--({10+.57*cos(210)},{1.28+.57*sin(210)});
\draw[original] (9.6,0)--({10+.57*cos(240)},{1.28+.57*sin(240)});
\draw[original] (10,0)--({10+.57*cos(330)},{1.28+.57*sin(330)});
\draw[original] (10.4,0)--({10+.57*cos(300)},{1.28+.57*sin(300)});
\draw[original] (10.8,0)--({10+.57*cos(270)},{1.28+.57*sin(270)});
\fill (9.2,0) circle[radius=1.25pt];
\fill ({10+.57*cos(210)},{1.28+.57*sin(210)}) circle[radius=1.25pt];
\fill (9.6,0) circle[radius=1.25pt];
\fill ({10+.57*cos(240)},{1.28+.57*sin(240)}) circle[radius=1.25pt];
\fill (10,0) circle[radius=1.25pt];
\fill ({10+.57*cos(330)},{1.28+.57*sin(330)}) circle[radius=1.25pt];
\fill (10.4,0) circle[radius=1.25pt];
\fill ({10+.57*cos(300)},{1.28+.57*sin(300)}) circle[radius=1.25pt];
\fill (10.8,0) circle[radius=1.25pt];
\fill ({10+.57*cos(270)},{1.28+.57*sin(270)}) circle[radius=1.25pt];
\draw[decorate,decoration={brace,mirror,amplitude=3pt},line width=.45pt] (9.2,-.13)--(10.8,-.13) node[midway,below=4pt] {$I_3$};
\fill (0,0) circle[radius=1.25pt]; \fill (12,0) circle[radius=1.25pt];
\end{scope}
\begin{scope}[shift={(0,-2.95)}]
\draw[retained] (0,0)--(1.2,0);
\draw[discarded] (1.2,0)--(2.4,0);
\draw[retained] (2.4,0)--(5.6,0);
\draw[discarded] (5.6,0)--(6.8,0);
\draw[retained] (6.8,0)--(9.6,0);
\draw[discarded] (9.6,0)--(10.8,0);
\draw[retained] (10.8,0)--(12,0);
\node[anchor=east] at (-.15,0) {$P'$};
\draw[retained] ({2+.57*cos(240)},{1.28+.57*sin(240)}) arc[start angle=240,end angle=-90,radius=.57];
\draw[discarded] ({2+.57*cos(240)},{1.28+.57*sin(240)}) arc[start angle=240,end angle=270,radius=.57];
\node at (2,1.35) {$D_1$};
\draw[retained] (1.2,0)--({2+.57*cos(240)},{1.28+.57*sin(240)});
\draw[discarded] (1.6,0)--({2+.57*cos(210)},{1.28+.57*sin(210)});
\draw[discarded] (2,0)--({2+.57*cos(300)},{1.28+.57*sin(300)});
\draw[retained] (2.4,0)--({2+.57*cos(270)},{1.28+.57*sin(270)});
\draw[discarded] (2.8,0)--({2+.57*cos(330)},{1.28+.57*sin(330)});
\fill (1.2,0) circle[radius=1.25pt];
\fill ({2+.57*cos(240)},{1.28+.57*sin(240)}) circle[radius=1.25pt];
\fill (1.6,0) circle[radius=1.25pt];
\fill ({2+.57*cos(210)},{1.28+.57*sin(210)}) circle[radius=1.25pt];
\fill (2,0) circle[radius=1.25pt];
\fill ({2+.57*cos(300)},{1.28+.57*sin(300)}) circle[radius=1.25pt];
\fill (2.4,0) circle[radius=1.25pt];
\fill ({2+.57*cos(270)},{1.28+.57*sin(270)}) circle[radius=1.25pt];
\fill (2.8,0) circle[radius=1.25pt];
\fill ({2+.57*cos(330)},{1.28+.57*sin(330)}) circle[radius=1.25pt];
\draw[decorate,decoration={brace,mirror,amplitude=3pt},line width=.45pt] (1.2,-.13)--(2.8,-.13) node[midway,below=4pt] {$I_1$};
\draw[retained] ({6+.57*cos(270)},{1.28+.57*sin(270)}) arc[start angle=270,end angle=-60,radius=.57];
\draw[discarded] ({6+.57*cos(270)},{1.28+.57*sin(270)}) arc[start angle=270,end angle=300,radius=.57];
\node at (6,1.35) {$D_2$};
\draw[discarded] (5.2,0)--({6+.57*cos(210)},{1.28+.57*sin(210)});
\draw[retained] (5.6,0)--({6+.57*cos(270)},{1.28+.57*sin(270)});
\draw[discarded] (6,0)--({6+.57*cos(240)},{1.28+.57*sin(240)});
\draw[discarded] (6.4,0)--({6+.57*cos(330)},{1.28+.57*sin(330)});
\draw[retained] (6.8,0)--({6+.57*cos(300)},{1.28+.57*sin(300)});
\fill (5.2,0) circle[radius=1.25pt];
\fill ({6+.57*cos(210)},{1.28+.57*sin(210)}) circle[radius=1.25pt];
\fill (5.6,0) circle[radius=1.25pt];
\fill ({6+.57*cos(270)},{1.28+.57*sin(270)}) circle[radius=1.25pt];
\fill (6,0) circle[radius=1.25pt];
\fill ({6+.57*cos(240)},{1.28+.57*sin(240)}) circle[radius=1.25pt];
\fill (6.4,0) circle[radius=1.25pt];
\fill ({6+.57*cos(330)},{1.28+.57*sin(330)}) circle[radius=1.25pt];
\fill (6.8,0) circle[radius=1.25pt];
\fill ({6+.57*cos(300)},{1.28+.57*sin(300)}) circle[radius=1.25pt];
\draw[decorate,decoration={brace,mirror,amplitude=3pt},line width=.45pt] (5.2,-.13)--(6.8,-.13) node[midway,below=4pt] {$I_2$};
\draw[retained] ({10+.57*cos(240)},{1.28+.57*sin(240)}) arc[start angle=240,end angle=-90,radius=.57];
\draw[discarded] ({10+.57*cos(240)},{1.28+.57*sin(240)}) arc[start angle=240,end angle=270,radius=.57];
\node at (10,1.35) {$D_3$};
\draw[discarded] (9.2,0)--({10+.57*cos(210)},{1.28+.57*sin(210)});
\draw[retained] (9.6,0)--({10+.57*cos(240)},{1.28+.57*sin(240)});
\draw[discarded] (10,0)--({10+.57*cos(330)},{1.28+.57*sin(330)});
\draw[discarded] (10.4,0)--({10+.57*cos(300)},{1.28+.57*sin(300)});
\draw[retained] (10.8,0)--({10+.57*cos(270)},{1.28+.57*sin(270)});
\fill (9.2,0) circle[radius=1.25pt];
\fill ({10+.57*cos(210)},{1.28+.57*sin(210)}) circle[radius=1.25pt];
\fill (9.6,0) circle[radius=1.25pt];
\fill ({10+.57*cos(240)},{1.28+.57*sin(240)}) circle[radius=1.25pt];
\fill (10,0) circle[radius=1.25pt];
\fill ({10+.57*cos(330)},{1.28+.57*sin(330)}) circle[radius=1.25pt];
\fill (10.4,0) circle[radius=1.25pt];
\fill ({10+.57*cos(300)},{1.28+.57*sin(300)}) circle[radius=1.25pt];
\fill (10.8,0) circle[radius=1.25pt];
\fill ({10+.57*cos(270)},{1.28+.57*sin(270)}) circle[radius=1.25pt];
\draw[decorate,decoration={brace,mirror,amplitude=3pt},line width=.45pt] (9.2,-.13)--(10.8,-.13) node[midway,below=4pt] {$I_3$};
\fill (0,0) circle[radius=1.25pt]; \fill (12,0) circle[radius=1.25pt];
\end{scope}
\end{tikzpicture}
\endgroup
\caption{Inserting $D_i$ into $P$ within disjoint intervals $I_i$ to obtain a path $P'$.}
\label{fig:interval-insertion}
\end{figure}

For each $D_i\in\mathcal D_P$, choose $s^4$ paths from $\mathcal P_i$
ending on $P$, and divide their endpoints along $P$ into
$s^3$ consecutive sets of $s$ endpoints.
The $s^3$ subpaths of $P$ from the first to the last endpoint
of each set are pairwise disjoint, so their vertex counts
sum to at most $|P|$.
Let $\mathcal I_i$ consist of those subpaths $I$ for which $|V(I)|\le2|P|/s^3$.
It follows that $|\mathcal I_i|\ge s^3/2$.

Select one subpath from each family in stages
$k=1,\ldots,|\mathcal D_P|$.
At stage $k$, among all subpaths in the current families,
choose one whose last vertex occurs first along $P$.
Write $i_k$ for the index of its family and $I_{i_k}$ for
the chosen subpath.
We then remove the family $\mathcal I_{i_k}$ and delete
every subpath meeting $I_{i_k}$ from each remaining family.
A subpath deleted from another family must contain the
last vertex of $I_{i_k}$, since it meets $I_{i_k}$ and
cannot end before that vertex.
The members of each family are disjoint, so this update
deletes at most one subpath from each remaining family.
Each family initially has at least $s^3/2\ge s^2\ge |\mathcal D_P|$
members. Before stage $k$, each remaining family has lost
at most $k-1$ members and is therefore nonempty.
Thus every stage is possible, and the construction selects
one $I_i$ for every $D_i\in\mathcal D_P$, with all selected subpaths disjoint.

We construct $P'$ by replacing a subpath of each $I_i$
with a path through $D_i$. For $D_i\in\mathcal D_P$, the $s$ chosen
paths ending on $I_i$ have distinct endpoints on $D_i$.
By the pigeonhole principle, some two consecutive endpoints
on $D_i$ bound a subpath of length at most $|D_i|/s$.
Replacing the subpath of $P$ between their corresponding
endpoints by the two paths to $D_i$ and the complementary
subpath of $D_i$ inserts all but at most $|D_i|/s$ vertices
of $D_i$ into $P$.
The replacements occur within the disjoint subpaths $I_i$,
so together they form a path $P'$ with the same endpoints as $P$.
They lose at most $s^2\cdot2|P|/s^3=2|P|/s$ vertices of $P$
and at most $|D_i|/s$ vertices of each inserted cycle,
proving the claim.
\end{proof}

We now choose a subfamily $\mathcal S\subseteq\mathcal Q$.
Once $\mathcal S$ is fixed, we will proceed as follows:
\begin{itemize}
\item[(1)] Construct paths from cycles $D_i\notin\mathcal D$
joining distinct members of $\mathcal S$.
\item[(2)] Replace each $P\in\mathcal Q\setminus\mathcal S$ by
the path $P'$ given by Claim~\ref{clm:insert-path}.
Keeping every edge of these paths $P'$ and most of the
paths constructed in (1), delete subpaths only within
members of $\mathcal S$ to obtain a family of disjoint cycles.
\item[(3)] Discard the cycles of length less than $3x/2$.
\end{itemize}

For a choice of $\mathcal S$, let $\mathcal E$ consist of
the following cycles:
\begin{itemize}[leftmargin=*]
\item the cycles $D_i\in\mathcal D$ with $Q_i\in\mathcal S$;
\item the cycles $D_i\notin\mathcal D$ for which fewer than
$s^5/2$ paths in $\mathcal P_i$ end on members of $\mathcal S$.
\end{itemize}
The new cycles constructed for this star will contain
no vertices of the cycles in $\mathcal E$.
We choose $\mathcal S$ so that
\begin{equation}\label{eq:star-selection}
 \sum_{P\in\mathcal S}|P|+\sum_{D\in\mathcal E}|D|
       +4x|\mathcal S|\le\frac{5M_Q}{s}.
\end{equation}
Indeed, let $\mathcal S$ be obtained by selecting each member of
$\mathcal Q$ independently with probability $1/s$.
For $D_i\in\mathcal D$, the probability that $D_i\in\mathcal E$ is $1/s$.
For $D_i\notin\mathcal D$, let $N_i$ count the paths in $\mathcal P_i$
ending on members of $\mathcal S$, and write $a_{i,P}$ for
the number ending on $P\in\mathcal Q$. Then
\[
 N_i=\sum_{P\in\mathcal Q}a_{i,P}\mathbf 1_{\{P\in\mathcal S\}},
 \qquad 0\le a_{i,P}<s^4,
 \qquad \sum_{P\in\mathcal Q}a_{i,P}=s^6.
\]
Thus $N_i/s^4$ is a sum of independent random variables
in $[0,1]$ with expectation $s$.
Chernoff's inequality gives
\[
 \Pr(D_i\in\mathcal E)=\Pr(N_i<s^5/2)
 \le e^{-s/8}\le1/s.
\]
Each cycle $D_i$ therefore belongs to $\mathcal E$ with
probability at most $1/s$.
Since each member of $\mathcal Q$ is selected with
probability $1/s$, the linearity of expectation implies that the expectation of the left-hand side in~\eqref{eq:star-selection} is at most
\[
 \frac{|Q|+\sum_{i=1}^t|D_i|+4tx}{s}
 \le\frac{5M_Q}{s},
\]
where the last inequality uses $tx\le\sum_{i=1}^t|D_i|$.
We may therefore fix $\mathcal S$
satisfying~\eqref{eq:star-selection}.

Let $Q'$ be obtained from $Q$ by replacing each
$P\in\mathcal Q\setminus\mathcal S$ with the path $P'$
given by Claim~\ref{clm:insert-path}.
The paths $P'$ are pairwise disjoint, have the same endpoints as the original paths they replace, and meet no member of $\mathcal S$.
Thus $Q'$ is a cycle containing every member of $\mathcal S$ as a subpath.

\medskip

We now construct, for each cycle
$D_i\notin\mathcal D\cup\mathcal E$, a path between distinct
members of $\mathcal S$, as described in the following claim.

\begin{claim}\label{clm:between-parts}
For each $D_i\notin\mathcal D\cup\mathcal E$, there is
a path $L_i$ between distinct members of $\mathcal S$
consisting of two paths in $\mathcal P_i$ and a subpath of $D_i$ containing at least $(1-2/s)|D_i|$ vertices.
\end{claim}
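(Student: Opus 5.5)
The argument mirrors the construction of the paths $L_D$ for $D\in\mathcal R\setminus\mathcal B$, with the members of $\mathcal S$ playing the role of the cycles in $\mathcal B$. Fix $D_i\notin\mathcal D\cup\mathcal E$. Since $D_i\notin\mathcal E$ and $D_i\notin\mathcal D$, at least $s^5/2$ paths of $\mathcal P_i$ end on members of $\mathcal S$. For each such path, label its endpoint on $D_i$ by the member $P\in\mathcal S$ containing its other endpoint. The paths in $\mathcal P_i\subseteq\mathcal P$ are disjoint, so these labelled endpoints are distinct vertices of $D_i$. Because $D_i\notin\mathcal D$, fewer than $s^4$ paths of $\mathcal P_i$ end on any single member of $\mathcal Q$, so each label occurs fewer than $s^4$ times.

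Next, go around $D_i$ in cyclic order through the labelled endpoints and split them into maximal runs of consecutive endpoints with the same label. Each run has fewer than $s^4$ endpoints, so there are more than $(s^5/2)/s^4=s/2$ runs, and hence at least $s/2$ cyclically consecutive pairs of labelled endpoints with different labels. If all endpoints carried one label there would be fewer than $s^4<s^5/2$ of them, so at least two labels occur and this count is valid. The open arcs of $D_i$ strictly between the two vertices of each such pair are pairwise disjoint, so their total number of vertices is at most $|D_i|$. By pigeonhole, some pair $u,v$ bounds an arc of length at most $2|D_i|/s$.

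Now take the complementary subpath of $D_i$ from $u$ to $v$. It contains at least $(1-2/s)|D_i|$ vertices of $D_i$, and $u,v$ are included since they are its endpoints. Concatenate it with the two paths of $\mathcal P_i$ ending at $u$ and at $v$. Their other endpoints lie on distinct members of $\mathcal S$ because the labels differ.

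The result is a path and not just a walk. The two paths of $\mathcal P$ are disjoint from each other, and their internal vertices lie outside every cycle of $\mathcal H$, so they meet $D_i$ only at $u$ and $v$. Their endpoints on $Q$ lie on the two chosen members of $\mathcal S$, which are disjoint subpaths of $Q$. This gives $L_i$ as claimed.

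The only delicate point is the counting that produces at least $s/2$ label changes, which needs both thresholds: $s^5/2$ from $D_i\notin\mathcal E$ and $s^4$ from $D_i\notin\mathcal D$. Everything else is the same bookkeeping as in the construction of $L_D$.
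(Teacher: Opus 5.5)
Your proposal is correct and is essentially the paper's own argument: label the endpoints on $D_i$ by members of $\mathcal S$, use $D_i\notin\mathcal E$ (at least $s^5/2$ labels) and $D_i\notin\mathcal D$ (each label fewer than $s^4$ times) to get at least $s/2$ label changes, and take the complement of the shortest arc between consecutive endpoints with different labels. Your extra checks, that at least two labels occur and that the concatenation is a genuine path, are details the paper leaves implicit.
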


\begin{proof}[\normalfont\bfseries Proof of Claim~\ref{clm:between-parts}]
For each $P\in\mathcal S$, label by $P$ the endpoints
on $D_i$ of paths in $\mathcal P_i$ ending on $P$.
There are at least $s^5/2$ labelled endpoints because
$D_i\notin\mathcal E$, and each label occurs fewer than
$s^4$ times because $D_i\notin\mathcal D$.
In cyclic order along $D_i$, each maximal sequence of
consecutive endpoints with the same label therefore contains
fewer than $s^4$ endpoints. Hence at least $s/2$ pairs of
consecutive labelled endpoints have different labels.
The subpaths between these pairs, containing no other
labelled endpoint, have total length at most $|D_i|$.
By the pigeonhole principle, one has length at most $2|D_i|/s$.
Its complementary subpath in $D_i$ and the two corresponding
paths in $\mathcal P_i$ form the required path $L_i$.

Figure~\ref{fig:cycles-to-paths}(a)--(b) illustrates these replacements.
In (a)--(c), the red subpaths are members of $\mathcal S$.
The black subpaths $P_1'$ and $P_2'$, with
$P_1,P_2\in\mathcal Q\setminus\mathcal S$, have already
replaced $P_1$ and $P_2$ in the construction of $Q'$; here they are
redrawn along the circle representing $Q'$.
\end{proof}

\begin{figure}[!htb]
\centering
\begingroup
\definecolor{cyclered}{HTML}{B33A3A}
\definecolor{cycleblue}{HTML}{236B8E}
\newcommand{\Qarc}[3]{\draw[#1] ({3.0*cos(#2)},{3.0*sin(#2)}) arc[start angle=#2,end angle=#3,radius=3.0];}
\begin{tikzpicture}[x=.76cm,y=.76cm,line cap=round,line join=round,
 every node/.style={font=\footnotesize,inner sep=1pt},
 ordinary/.style={draw=black,line width=.60pt},
 retained/.style={draw=black,line width=1.05pt},
 candidate/.style={draw=black!34,line width=.40pt},
 omitted/.style={draw=black!28,line width=.5pt,dash pattern=on 1.6pt off 1.5pt},
 selected/.style={draw=cyclered,line width=1.65pt}]
\path[use as bounding box] (-3.66,-12.50) rectangle (11.86,4.05);

\begin{scope}[shift={(0,0)}]
\draw[candidate] (-2.94488,0.57243) .. controls (-2.59712,0.61456) and (-2.09989,0.58975) .. (-1.91993,0.59344);
\fill[black!34] (-2.94488,0.57243) circle[radius=.7pt];
\fill[black!34] (-1.91993,0.59344) circle[radius=.7pt];
\draw[candidate] (-0.57243,2.94488) .. controls (-0.84608,2.23323) and (-1.39931,1.05796) .. (-1.47156,0.89309);
\fill[black!34] (-0.57243,2.94488) circle[radius=.7pt];
\fill[black!34] (-1.47156,0.89309) circle[radius=.7pt];
\draw[candidate] (-2.98858,-0.26147) .. controls (-2.24743,-0.29735) and (-0.99772,-0.45229) .. (-0.81854,-0.46947);
\fill[black!34] (-2.98858,-0.26147) circle[radius=.7pt];
\fill[black!34] (-0.81854,-0.46947) circle[radius=.7pt];
\draw[candidate] (2.98858,-0.26147) .. controls (1.91340,-0.37006) and (-0.00116,-0.46589) .. (-0.18075,-0.47817);
\fill[black!34] (2.98858,-0.26147) circle[radius=.7pt];
\fill[black!34] (-0.18075,-0.47817) circle[radius=.7pt];
\draw[candidate] (0.57243,2.94488) .. controls (0.94019,2.26084) and (1.48332,1.05061) .. (1.56132,0.88839);
\fill[black!34] (0.57243,2.94488) circle[radius=.7pt];
\fill[black!34] (1.56132,0.88839) circle[radius=.7pt];
\draw[candidate] (2.89778,0.77646) .. controls (2.60327,0.69769) and (2.19466,0.67287) .. (2.01658,0.64664);
\fill[black!34] (2.89778,0.77646) circle[radius=.7pt];
\fill[black!34] (2.01658,0.64664) circle[radius=.7pt];
\draw[candidate] (-2.57150,1.54511) .. controls (-2.38739,1.60252) and (-2.19642,1.59026) .. (-2.01771,1.61176);
\fill[black!34] (-2.57150,1.54511) circle[radius=.7pt];
\fill[black!34] (-2.01771,1.61176) circle[radius=.7pt];
\draw[candidate] (-1.07510,2.80074) .. controls (-1.20489,2.48840) and (-1.46139,2.08939) .. (-1.54729,1.93121);
\fill[black!34] (-1.07510,2.80074) circle[radius=.7pt];
\fill[black!34] (-1.54729,1.93121) circle[radius=.7pt];
\draw[candidate] (-2.80074,-1.07510) .. controls (-1.78334,-1.14008) and (0.00240,-1.35102) .. (0.18154,-1.36865);
\fill[black!34] (-2.80074,-1.07510) circle[radius=.7pt];
\fill[black!34] (0.18154,-1.36865) circle[radius=.7pt];
\draw[candidate] (2.80074,-1.07510) .. controls (2.13111,-1.20501) and (0.99509,-1.33009) .. (0.81686,-1.35526);
\fill[black!34] (2.80074,-1.07510) circle[radius=.7pt];
\fill[black!34] (0.81686,-1.35526) circle[radius=.7pt];
\draw[candidate] (1.07510,2.80074) .. controls (1.26641,2.52180) and (1.46139,2.08939) .. (1.54729,1.93121);
\fill[black!34] (1.07510,2.80074) circle[radius=.7pt];
\fill[black!34] (1.54729,1.93121) circle[radius=.7pt];
\draw[candidate] (2.57150,1.54511) .. controls (2.37903,1.53303) and (2.19642,1.59026) .. (2.01771,1.61176);
\fill[black!34] (2.57150,1.54511) circle[radius=.7pt];
\fill[black!34] (2.01771,1.61176) circle[radius=.7pt];
\draw[candidate] (2.64884,-1.40841) .. controls (2.38264,-1.56585) and (1.98815,-1.70399) .. (1.82385,-1.77749);
\fill[black!34] (2.64884,-1.40841) circle[radius=.7pt];
\fill[black!34] (1.82385,-1.77749) circle[radius=.7pt];
\draw[candidate] (0.52094,-2.95442) .. controls (0.77453,-2.64447) and (1.21475,-2.24352) .. (1.34047,-2.11470);
\fill[black!34] (0.52094,-2.95442) circle[radius=.7pt];
\fill[black!34] (1.34047,-2.11470) circle[radius=.7pt];
\draw[candidate] (-2.64884,-1.40841) .. controls (-2.38264,-1.56585) and (-1.98815,-1.70399) .. (-1.82385,-1.77749);
\fill[black!34] (-2.64884,-1.40841) circle[radius=.7pt];
\fill[black!34] (-1.82385,-1.77749) circle[radius=.7pt];
\draw[candidate] (-0.52094,-2.95442) .. controls (-0.77453,-2.64447) and (-1.21475,-2.24352) .. (-1.34047,-2.11470);
\fill[black!34] (-0.52094,-2.95442) circle[radius=.7pt];
\fill[black!34] (-1.34047,-2.11470) circle[radius=.7pt];
\draw[candidate] (2.81908,1.02606) .. controls (2.20000,1.20000) and (-0.95000,1.35000) .. (-1.29930,0.70945);
\fill[black!34] (2.81908,1.02606) circle[radius=.7pt];
\fill[black!34] (-1.29930,0.70945) circle[radius=.7pt];
\draw[candidate] (0.00000,3.00000) .. controls (0.10000,1.90000) and (-0.45000,0.25000) .. (-0.50000,-0.18000);
\fill[black!34] (0.00000,3.00000) circle[radius=.7pt];
\fill[black!34] (-0.50000,-0.18000) circle[radius=.7pt];
\draw[candidate] (-3.00000,0.00000) .. controls (-1.90000,-0.10000) and (1.05000,0.10000) .. (1.39930,0.49055);
\fill[black!34] (-3.00000,0.00000) circle[radius=.7pt];
\fill[black!34] (1.39930,0.49055) circle[radius=.7pt];
\draw[candidate] (-0.00000,-3.00000) .. controls (0.25000,-2.60000) and (0.55000,-2.00000) .. (0.50000,-1.72000);
\fill[black!34] (-0.00000,-3.00000) circle[radius=.7pt];
\fill[black!34] (0.50000,-1.72000) circle[radius=.7pt];
\draw[candidate] (-2.88379,-0.82691) .. controls (-1.30000,-1.25000) and (0.90000,-2.50000) .. (1.29019,-2.05000);
\fill[black!34] (-2.88379,-0.82691) circle[radius=.7pt];
\fill[black!34] (1.29019,-2.05000) circle[radius=.7pt];
\draw[candidate] (2.88379,-0.82691) .. controls (1.55301,-0.99415) and (-0.22726,-2.98763) .. (-1.29019,-2.05000);
\fill[black!34] (2.88379,-0.82691) circle[radius=.7pt];
\fill[black!34] (-1.29019,-2.05000) circle[radius=.7pt];
\Qarc{retained}{35}{65}
\Qarc{ordinary}{115}{145}
\Qarc{retained}{215}{245}
\Qarc{ordinary}{295}{325}
\Qarc{selected}{145}{215}
\Qarc{selected}{65}{115}
\Qarc{selected}{325}{395}
\Qarc{selected}{245}{295}
\fill[black] (2.45746,1.72073) circle[radius=.9pt];
\fill[black] (1.26785,2.71892) circle[radius=.9pt];
\fill[black] (-1.26785,2.71892) circle[radius=.9pt];
\fill[black] (-2.45746,1.72073) circle[radius=.9pt];
\fill[black] (-2.45746,-1.72073) circle[radius=.9pt];
\fill[black] (-1.26785,-2.71892) circle[radius=.9pt];
\fill[black] (1.26785,-2.71892) circle[radius=.9pt];
\fill[black] (2.45746,-1.72073) circle[radius=.9pt];
\node[text=cyclered] at (-3.46000,0.00000) {$S_1$};
\node[text=cyclered] at (0.00000,3.72000) {$S_2$};
\node[text=cyclered] at (3.46000,0.00000) {$S_3$};
\node[text=cyclered] at (0.00000,-3.38000) {$S_4$};
\node[] at (2.31,2.65) {$P_1'$};
\node[] at (-2.37,-2.66) {$P_2'$};
\node[] at (-2.42,2.70) {$Q'$};
\draw[draw=white,line width=2.0pt] (-2.98357,0.31359) .. controls (-2.63329,0.50324) and (-1.86860,0.78941) .. (-1.89670,0.71987);
\draw[retained] (-2.98357,0.31359) .. controls (-2.63329,0.50324) and (-1.86860,0.78941) .. (-1.89670,0.71987);
\draw[draw=white,line width=2.0pt] (-0.31359,2.98357) .. controls (-0.74611,2.24330) and (-1.78941,0.86860) .. (-1.71987,0.89670);
\draw[retained] (-0.31359,2.98357) .. controls (-0.74611,2.24330) and (-1.78941,0.86860) .. (-1.71987,0.89670);
\fill[white] (-1.60000,0.60000) circle[radius=0.32];
\draw[ordinary] (-1.60000,0.60000) circle[radius=0.32];
\fill[black] (-2.98357,0.31359) circle[radius=1.05pt];
\fill[black] (-0.31359,2.98357) circle[radius=1.05pt];
\fill[black] (-1.89670,0.71987) circle[radius=1.05pt];
\fill[black] (-1.71987,0.89670) circle[radius=1.05pt];
\node[text=black] at (-1.60000,0.60000) {$D_{1}$};
\draw[draw=white,line width=2.0pt] (-2.95442,-0.52094) .. controls (-2.15601,-0.55935) and (-0.55600,-0.82387) .. (-0.62503,-0.79456);
\draw[retained] (-2.95442,-0.52094) .. controls (-2.15601,-0.55935) and (-0.55600,-0.82387) .. (-0.62503,-0.79456);
\draw[draw=white,line width=2.0pt] (2.95442,-0.52094) .. controls (1.82694,-0.66879) and (-0.44400,-0.82387) .. (-0.37497,-0.79456);
\draw[retained] (2.95442,-0.52094) .. controls (1.82694,-0.66879) and (-0.44400,-0.82387) .. (-0.37497,-0.79456);
\fill[white] (-0.50000,-0.50000) circle[radius=0.32];
\draw[ordinary] (-0.50000,-0.50000) circle[radius=0.32];
\fill[black] (-2.95442,-0.52094) circle[radius=1.05pt];
\fill[black] (2.95442,-0.52094) circle[radius=1.05pt];
\fill[black] (-0.62503,-0.79456) circle[radius=1.05pt];
\fill[black] (-0.37497,-0.79456) circle[radius=1.05pt];
\node[text=black] at (-0.50000,-0.50000) {$D_{2}$};
\draw[draw=white,line width=2.0pt] (0.31359,2.98357) .. controls (0.51685,2.30747) and (1.87992,0.87505) .. (1.80945,0.90070);
\draw[retained] (0.31359,2.98357) .. controls (0.51685,2.30747) and (1.87992,0.87505) .. (1.80945,0.90070);
\draw[draw=white,line width=2.0pt] (2.95442,0.52094) .. controls (2.61563,0.53833) and (1.96183,0.79867) .. (1.99233,0.73016);
\draw[retained] (2.95442,0.52094) .. controls (2.61563,0.53833) and (1.96183,0.79867) .. (1.99233,0.73016);
\fill[white] (1.70000,0.60000) circle[radius=0.32];
\draw[ordinary] (1.70000,0.60000) circle[radius=0.32];
\fill[black] (0.31359,2.98357) circle[radius=1.05pt];
\fill[black] (2.95442,0.52094) circle[radius=1.05pt];
\fill[black] (1.80945,0.90070) circle[radius=1.05pt];
\fill[black] (1.99233,0.73016) circle[radius=1.05pt];
\node[text=black] at (1.70000,0.60000) {$D_{3}$};
\draw[draw=white,line width=2.0pt] (-2.69638,1.31511) .. controls (-2.48533,1.52395) and (-1.95107,1.86210) .. (-1.98512,1.79528);
\draw[retained] (-2.69638,1.31511) .. controls (-2.48533,1.52395) and (-1.95107,1.86210) .. (-1.98512,1.79528);
\draw[draw=white,line width=2.0pt] (-0.82691,2.88379) .. controls (-1.11749,2.52866) and (-1.86528,1.93409) .. (-1.79356,1.95602);
\draw[retained] (-0.82691,2.88379) .. controls (-1.11749,2.52866) and (-1.86528,1.93409) .. (-1.79356,1.95602);
\fill[white] (-1.70000,1.65000) circle[radius=0.32];
\draw[ordinary] (-1.70000,1.65000) circle[radius=0.32];
\fill[black] (-2.69638,1.31511) circle[radius=1.05pt];
\fill[black] (-0.82691,2.88379) circle[radius=1.05pt];
\fill[black] (-1.98512,1.79528) circle[radius=1.05pt];
\fill[black] (-1.79356,1.95602) circle[radius=1.05pt];
\node[text=black] at (-1.70000,1.65000) {$D_{4}$};
\draw[draw=white,line width=2.0pt] (-2.69638,-1.31511) .. controls (-1.65587,-1.18895) and (0.44400,-1.07613) .. (0.37497,-1.10544);
\draw[retained] (-2.69638,-1.31511) .. controls (-1.65587,-1.18895) and (0.44400,-1.07613) .. (0.37497,-1.10544);
\draw[draw=white,line width=2.0pt] (2.69638,-1.31511) .. controls (1.98658,-1.29854) and (0.55600,-1.07613) .. (0.62503,-1.10544);
\draw[retained] (2.69638,-1.31511) .. controls (1.98658,-1.29854) and (0.55600,-1.07613) .. (0.62503,-1.10544);
\fill[white] (0.50000,-1.40000) circle[radius=0.32];
\draw[ordinary] (0.50000,-1.40000) circle[radius=0.32];
\fill[black] (-2.69638,-1.31511) circle[radius=1.05pt];
\fill[black] (2.69638,-1.31511) circle[radius=1.05pt];
\fill[black] (0.37497,-1.10544) circle[radius=1.05pt];
\fill[black] (0.62503,-1.10544) circle[radius=1.05pt];
\node[text=black] at (0.50000,-1.40000) {$D_{5}$};
\draw[draw=white,line width=2.0pt] (0.82691,2.88379) .. controls (1.19366,2.60802) and (1.86528,1.93409) .. (1.79356,1.95602);
\draw[retained] (0.82691,2.88379) .. controls (1.19366,2.60802) and (1.86528,1.93409) .. (1.79356,1.95602);
\draw[draw=white,line width=2.0pt] (2.69638,1.31511) .. controls (2.42378,1.43278) and (1.95107,1.86210) .. (1.98512,1.79528);
\draw[retained] (2.69638,1.31511) .. controls (2.42378,1.43278) and (1.95107,1.86210) .. (1.98512,1.79528);
\fill[white] (1.70000,1.65000) circle[radius=0.32];
\draw[ordinary] (1.70000,1.65000) circle[radius=0.32];
\fill[black] (0.82691,2.88379) circle[radius=1.05pt];
\fill[black] (2.69638,1.31511) circle[radius=1.05pt];
\fill[black] (1.79356,1.95602) circle[radius=1.05pt];
\fill[black] (1.98512,1.79528) circle[radius=1.05pt];
\node[text=black] at (1.70000,1.65000) {$D_{6}$};
\draw[draw=white,line width=2.0pt] (2.54414,-1.58976) .. controls (2.31902,-1.80067) and (1.75068,-2.13527) .. (1.79423,-2.07421);
\draw[retained] (2.54414,-1.58976) .. controls (2.31902,-1.80067) and (1.75068,-2.13527) .. (1.79423,-2.07421);
\draw[draw=white,line width=2.0pt] (0.26147,-2.98858) .. controls (0.68655,-2.67206) and (1.66852,-2.18562) .. (1.59434,-2.19670);
\draw[retained] (0.26147,-2.98858) .. controls (0.68655,-2.67206) and (1.66852,-2.18562) .. (1.59434,-2.19670);
\fill[white] (1.55000,-1.90000) circle[radius=0.3];
\draw[ordinary] (1.55000,-1.90000) circle[radius=0.3];
\fill[black] (2.54414,-1.58976) circle[radius=1.05pt];
\fill[black] (0.26147,-2.98858) circle[radius=1.05pt];
\fill[black] (1.79423,-2.07421) circle[radius=1.05pt];
\fill[black] (1.59434,-2.19670) circle[radius=1.05pt];
\node[text=black] at (1.55000,-1.90000) {$D_{7}$};
\draw[draw=white,line width=2.0pt] (-2.54414,-1.58976) .. controls (-2.31902,-1.80067) and (-1.75068,-2.13527) .. (-1.79423,-2.07421);
\draw[retained] (-2.54414,-1.58976) .. controls (-2.31902,-1.80067) and (-1.75068,-2.13527) .. (-1.79423,-2.07421);
\draw[draw=white,line width=2.0pt] (-0.26147,-2.98858) .. controls (-0.68655,-2.67206) and (-1.66852,-2.18562) .. (-1.59434,-2.19670);
\draw[retained] (-0.26147,-2.98858) .. controls (-0.68655,-2.67206) and (-1.66852,-2.18562) .. (-1.59434,-2.19670);
\fill[white] (-1.55000,-1.90000) circle[radius=0.3];
\draw[ordinary] (-1.55000,-1.90000) circle[radius=0.3];
\fill[black] (-2.54414,-1.58976) circle[radius=1.05pt];
\fill[black] (-0.26147,-2.98858) circle[radius=1.05pt];
\fill[black] (-1.79423,-2.07421) circle[radius=1.05pt];
\fill[black] (-1.59434,-2.19670) circle[radius=1.05pt];
\node[text=black] at (-1.55000,-1.90000) {$D_{8}$};
\node at (0,-3.88) {(a)};
\end{scope}
\begin{scope}[shift={(8.2,0)}]
\Qarc{retained}{35}{65}
\Qarc{ordinary}{115}{145}
\Qarc{retained}{215}{245}
\Qarc{ordinary}{295}{325}
\Qarc{selected}{145}{215}
\Qarc{selected}{65}{115}
\Qarc{selected}{325}{395}
\Qarc{selected}{245}{295}
\fill[black] (2.45746,1.72073) circle[radius=.9pt];
\fill[black] (1.26785,2.71892) circle[radius=.9pt];
\fill[black] (-1.26785,2.71892) circle[radius=.9pt];
\fill[black] (-2.45746,1.72073) circle[radius=.9pt];
\fill[black] (-2.45746,-1.72073) circle[radius=.9pt];
\fill[black] (-1.26785,-2.71892) circle[radius=.9pt];
\fill[black] (1.26785,-2.71892) circle[radius=.9pt];
\fill[black] (2.45746,-1.72073) circle[radius=.9pt];
\node[text=cyclered,align=center] at (-3.46000,0.00000) {$S_1$};
\node[text=cyclered,align=center] at (0.00000,3.72000) {$S_2$};
\node[text=cyclered,align=center] at (3.46000,0.00000) {$S_3$};
\node[text=cyclered] at (0.00000,-3.38000) {$S_4$\ {\scriptsize\color{black}}};
\node[] at (2.31,2.65) {$P_1'$};
\node[] at (-2.37,-2.66) {$P_2'$};
\node[] at (-2.42,2.70) {$Q'$};
\draw[retained] (-2.98357,0.31359) .. controls (-2.63329,0.50324) and (-1.86860,0.78941) .. (-1.89670,0.71987);
\draw[retained] (-0.31359,2.98357) .. controls (-0.74611,2.24330) and (-1.78941,0.86860) .. (-1.71987,0.89670);
\draw[retained] (-1.89670,0.71987) arc[start angle=158.0,end angle=471.99999999999994,radius=0.32];
\fill[black] (-2.98357,0.31359) circle[radius=1.05pt];
\fill[black] (-0.31359,2.98357) circle[radius=1.05pt];
\fill[black] (-1.89670,0.71987) circle[radius=1.05pt];
\fill[black] (-1.71987,0.89670) circle[radius=1.05pt];
\node[text=black] at (-1.60000,0.60000) {$L_{1}$};
\draw[retained] (-2.95442,-0.52094) .. controls (-2.15601,-0.55935) and (-0.55600,-0.82387) .. (-0.62503,-0.79456);
\draw[retained] (2.95442,-0.52094) .. controls (1.82694,-0.66879) and (-0.44400,-0.82387) .. (-0.37497,-0.79456);
\draw[retained] (-0.37497,-0.79456) arc[start angle=-67.0,end angle=247.0,radius=0.32];
\fill[black] (-2.95442,-0.52094) circle[radius=1.05pt];
\fill[black] (2.95442,-0.52094) circle[radius=1.05pt];
\fill[black] (-0.62503,-0.79456) circle[radius=1.05pt];
\fill[black] (-0.37497,-0.79456) circle[radius=1.05pt];
\node[text=black] at (-0.50000,-0.50000) {$L_{2}$};
\draw[retained] (0.31359,2.98357) .. controls (0.51685,2.30747) and (1.87992,0.87505) .. (1.80945,0.90070);
\draw[retained] (2.95442,0.52094) .. controls (2.61563,0.53833) and (1.96183,0.79867) .. (1.99233,0.73016);
\draw[retained] (1.80945,0.90070) arc[start angle=70.00000000000001,end angle=384.00000000000006,radius=0.32];
\fill[black] (0.31359,2.98357) circle[radius=1.05pt];
\fill[black] (2.95442,0.52094) circle[radius=1.05pt];
\fill[black] (1.80945,0.90070) circle[radius=1.05pt];
\fill[black] (1.99233,0.73016) circle[radius=1.05pt];
\node[text=black] at (1.70000,0.60000) {$L_{3}$};
\draw[retained] (-2.69638,1.31511) .. controls (-2.48533,1.52395) and (-1.95107,1.86210) .. (-1.98512,1.79528);
\draw[retained] (-0.82691,2.88379) .. controls (-1.11749,2.52866) and (-1.86528,1.93409) .. (-1.79356,1.95602);
\draw[retained] (-1.98512,1.79528) arc[start angle=153.0,end angle=467.00000000000006,radius=0.32];
\fill[black] (-2.69638,1.31511) circle[radius=1.05pt];
\fill[black] (-0.82691,2.88379) circle[radius=1.05pt];
\fill[black] (-1.98512,1.79528) circle[radius=1.05pt];
\fill[black] (-1.79356,1.95602) circle[radius=1.05pt];
\node[text=black] at (-1.70000,1.65000) {$L_{4}$};
\draw[retained] (-2.69638,-1.31511) .. controls (-1.65587,-1.18895) and (0.44400,-1.07613) .. (0.37497,-1.10544);
\draw[retained] (2.69638,-1.31511) .. controls (1.98658,-1.29854) and (0.55600,-1.07613) .. (0.62503,-1.10544);
\draw[retained] (0.37497,-1.10544) arc[start angle=113.0,end angle=427.0,radius=0.32];
\fill[black] (-2.69638,-1.31511) circle[radius=1.05pt];
\fill[black] (2.69638,-1.31511) circle[radius=1.05pt];
\fill[black] (0.37497,-1.10544) circle[radius=1.05pt];
\fill[black] (0.62503,-1.10544) circle[radius=1.05pt];
\node[text=black] at (0.50000,-1.40000) {$L_{5}$};
\draw[retained] (0.82691,2.88379) .. controls (1.19366,2.60802) and (1.86528,1.93409) .. (1.79356,1.95602);
\draw[retained] (2.69638,1.31511) .. controls (2.42378,1.43278) and (1.95107,1.86210) .. (1.98512,1.79528);
\draw[retained] (1.79356,1.95602) arc[start angle=73.0,end angle=387.0,radius=0.32];
\fill[black] (0.82691,2.88379) circle[radius=1.05pt];
\fill[black] (2.69638,1.31511) circle[radius=1.05pt];
\fill[black] (1.79356,1.95602) circle[radius=1.05pt];
\fill[black] (1.98512,1.79528) circle[radius=1.05pt];
\node[text=black] at (1.70000,1.65000) {$L_{6}$};
\draw[retained] (2.54414,-1.58976) .. controls (2.31902,-1.80067) and (1.75068,-2.13527) .. (1.79423,-2.07421);
\draw[retained] (0.26147,-2.98858) .. controls (0.68655,-2.67206) and (1.66852,-2.18562) .. (1.59434,-2.19670);
\draw[retained] (1.79423,-2.07421) arc[start angle=-35.5,end angle=278.50000000000006,radius=0.3];
\fill[black] (2.54414,-1.58976) circle[radius=1.05pt];
\fill[black] (0.26147,-2.98858) circle[radius=1.05pt];
\fill[black] (1.79423,-2.07421) circle[radius=1.05pt];
\fill[black] (1.59434,-2.19670) circle[radius=1.05pt];
\node[text=black] at (1.55000,-1.90000) {$L_{7}$};
\draw[retained] (-2.54414,-1.58976) .. controls (-2.31902,-1.80067) and (-1.75068,-2.13527) .. (-1.79423,-2.07421);
\draw[retained] (-0.26147,-2.98858) .. controls (-0.68655,-2.67206) and (-1.66852,-2.18562) .. (-1.59434,-2.19670);
\draw[retained] (-1.59434,-2.19670) arc[start angle=261.49999999999994,end angle=575.5,radius=0.3];
\fill[black] (-2.54414,-1.58976) circle[radius=1.05pt];
\fill[black] (-0.26147,-2.98858) circle[radius=1.05pt];
\fill[black] (-1.79423,-2.07421) circle[radius=1.05pt];
\fill[black] (-1.59434,-2.19670) circle[radius=1.05pt];
\node[text=black] at (-1.55000,-1.90000) {$L_{8}$};
\node at (0,-3.88) {(b)};
\end{scope}
\begin{scope}[shift={(0,-8.4)}]
\tikzset{ordinary/.style={draw=black!35,line width=.60pt},retained/.style={draw=black!35,line width=.95pt}}
\Qarc{retained}{35}{65}
\Qarc{ordinary}{115}{145}
\Qarc{retained}{215}{245}
\Qarc{ordinary}{295}{325}
\Qarc{selected}{145}{215}
\Qarc{selected}{65}{115}
\Qarc{selected}{325}{395}
\Qarc{selected}{245}{295}
\fill[black!35] (2.45746,1.72073) circle[radius=.9pt];
\fill[black!35] (1.26785,2.71892) circle[radius=.9pt];
\fill[black!35] (-1.26785,2.71892) circle[radius=.9pt];
\fill[black!35] (-2.45746,1.72073) circle[radius=.9pt];
\fill[black!35] (-2.45746,-1.72073) circle[radius=.9pt];
\fill[black!35] (-1.26785,-2.71892) circle[radius=.9pt];
\fill[black!35] (1.26785,-2.71892) circle[radius=.9pt];
\fill[black!35] (2.45746,-1.72073) circle[radius=.9pt];
\node[text=cyclered,align=center] at (-3.46000,0.00000) {$S_1$\\{\scriptsize\color{black}}};
\node[text=cyclered,align=center] at (0.00000,3.72000) {$S_2$};
\node[text=cyclered,align=center] at (3.46000,0.00000) {$S_3$};
\node[text=cyclered] at (0.00000,-3.38000) {$S_4$\ {\scriptsize\color{black}}};
\node[text=black!40] at (2.31,2.65) {$P_1'$};
\node[text=black!40] at (-2.37,-2.66) {$P_2'$};
\draw[draw=black,line width=1.15pt] (-2.98357,0.31359) .. controls (-2.63329,0.50324) and (-1.86860,0.78941) .. (-1.89670,0.71987);
\draw[draw=black,line width=1.15pt] (-0.31359,2.98357) .. controls (-0.74611,2.24330) and (-1.78941,0.86860) .. (-1.71987,0.89670);
\draw[draw=black,line width=1.15pt] (-1.89670,0.71987) arc[start angle=158.0,end angle=471.99999999999994,radius=0.32];
\fill[white] (-2.98357,0.31359) circle[radius=2.0pt];
\fill[black] (-2.98357,0.31359) circle[radius=1.45pt];
\fill[white] (-0.31359,2.98357) circle[radius=2.0pt];
\fill[black] (-0.31359,2.98357) circle[radius=1.45pt];
\fill[black] (-1.89670,0.71987) circle[radius=1.05pt];
\fill[black] (-1.71987,0.89670) circle[radius=1.05pt];
\node[text=black] at (-1.60000,0.60000) {$L_{1}$};
\draw[draw=black,line width=1.15pt] (-2.95442,-0.52094) .. controls (-2.15601,-0.55935) and (-0.55600,-0.82387) .. (-0.62503,-0.79456);
\draw[draw=black,line width=1.15pt] (2.95442,-0.52094) .. controls (1.82694,-0.66879) and (-0.44400,-0.82387) .. (-0.37497,-0.79456);
\draw[draw=black,line width=1.15pt] (-0.37497,-0.79456) arc[start angle=-67.0,end angle=247.0,radius=0.32];
\fill[white] (-2.95442,-0.52094) circle[radius=2.0pt];
\fill[black] (-2.95442,-0.52094) circle[radius=1.45pt];
\fill[white] (2.95442,-0.52094) circle[radius=2.0pt];
\fill[black] (2.95442,-0.52094) circle[radius=1.45pt];
\fill[black] (-0.62503,-0.79456) circle[radius=1.05pt];
\fill[black] (-0.37497,-0.79456) circle[radius=1.05pt];
\node[text=black] at (-0.50000,-0.50000) {$L_{2}$};
\draw[draw=black,line width=1.15pt] (0.31359,2.98357) .. controls (0.51685,2.30747) and (1.87992,0.87505) .. (1.80945,0.90070);
\draw[draw=black,line width=1.15pt] (2.95442,0.52094) .. controls (2.61563,0.53833) and (1.96183,0.79867) .. (1.99233,0.73016);
\draw[draw=black,line width=1.15pt] (1.80945,0.90070) arc[start angle=70.00000000000001,end angle=384.00000000000006,radius=0.32];
\fill[white] (0.31359,2.98357) circle[radius=2.0pt];
\fill[black] (0.31359,2.98357) circle[radius=1.45pt];
\fill[white] (2.95442,0.52094) circle[radius=2.0pt];
\fill[black] (2.95442,0.52094) circle[radius=1.45pt];
\fill[black] (1.80945,0.90070) circle[radius=1.05pt];
\fill[black] (1.99233,0.73016) circle[radius=1.05pt];
\node[text=black] at (1.70000,0.60000) {$L_{3}$};
\draw[draw=black,line width=1.15pt] (-2.69638,1.31511) .. controls (-2.48533,1.52395) and (-1.95107,1.86210) .. (-1.98512,1.79528);
\draw[draw=black,line width=1.15pt] (-0.82691,2.88379) .. controls (-1.11749,2.52866) and (-1.86528,1.93409) .. (-1.79356,1.95602);
\draw[draw=black,line width=1.15pt] (-1.98512,1.79528) arc[start angle=153.0,end angle=467.00000000000006,radius=0.32];
\fill[white] (-2.69638,1.31511) circle[radius=2.0pt];
\fill[black] (-2.69638,1.31511) circle[radius=1.45pt];
\fill[white] (-0.82691,2.88379) circle[radius=2.0pt];
\fill[black] (-0.82691,2.88379) circle[radius=1.45pt];
\fill[black] (-1.98512,1.79528) circle[radius=1.05pt];
\fill[black] (-1.79356,1.95602) circle[radius=1.05pt];
\node[text=black] at (-1.70000,1.65000) {$L_{4}$};
\draw[draw=black,line width=1.15pt] (-2.69638,-1.31511) .. controls (-1.65587,-1.18895) and (0.44400,-1.07613) .. (0.37497,-1.10544);
\draw[draw=black,line width=1.15pt] (2.69638,-1.31511) .. controls (1.98658,-1.29854) and (0.55600,-1.07613) .. (0.62503,-1.10544);
\draw[draw=black,line width=1.15pt] (0.37497,-1.10544) arc[start angle=113.0,end angle=427.0,radius=0.32];
\fill[white] (-2.69638,-1.31511) circle[radius=2.0pt];
\fill[black] (-2.69638,-1.31511) circle[radius=1.45pt];
\fill[white] (2.69638,-1.31511) circle[radius=2.0pt];
\fill[black] (2.69638,-1.31511) circle[radius=1.45pt];
\fill[black] (0.37497,-1.10544) circle[radius=1.05pt];
\fill[black] (0.62503,-1.10544) circle[radius=1.05pt];
\node[text=black] at (0.50000,-1.40000) {$L_{5}$};
\draw[draw=black,line width=1.15pt] (0.82691,2.88379) .. controls (1.19366,2.60802) and (1.86528,1.93409) .. (1.79356,1.95602);
\draw[draw=black,line width=1.15pt] (2.69638,1.31511) .. controls (2.42378,1.43278) and (1.95107,1.86210) .. (1.98512,1.79528);
\draw[draw=black,line width=1.15pt] (1.79356,1.95602) arc[start angle=73.0,end angle=387.0,radius=0.32];
\fill[white] (0.82691,2.88379) circle[radius=2.0pt];
\fill[black] (0.82691,2.88379) circle[radius=1.45pt];
\fill[white] (2.69638,1.31511) circle[radius=2.0pt];
\fill[black] (2.69638,1.31511) circle[radius=1.45pt];
\fill[black] (1.79356,1.95602) circle[radius=1.05pt];
\fill[black] (1.98512,1.79528) circle[radius=1.05pt];
\node[text=black] at (1.70000,1.65000) {$L_{6}$};
\draw[omitted] (2.54414,-1.58976) .. controls (2.31902,-1.80067) and (1.75068,-2.13527) .. (1.79423,-2.07421);
\draw[omitted] (0.26147,-2.98858) .. controls (0.68655,-2.67206) and (1.66852,-2.18562) .. (1.59434,-2.19670);
\draw[omitted] (1.79423,-2.07421) arc[start angle=-35.5,end angle=278.50000000000006,radius=0.3];
\fill[black!28] (2.54414,-1.58976) circle[radius=1.05pt];
\fill[black!28] (0.26147,-2.98858) circle[radius=1.05pt];
\fill[black!28] (1.79423,-2.07421) circle[radius=1.05pt];
\fill[black!28] (1.59434,-2.19670) circle[radius=1.05pt];
\node[text=black!28] at (1.55000,-1.90000) {$L_{7}$};
\draw[omitted] (-2.54414,-1.58976) .. controls (-2.31902,-1.80067) and (-1.75068,-2.13527) .. (-1.79423,-2.07421);
\draw[omitted] (-0.26147,-2.98858) .. controls (-0.68655,-2.67206) and (-1.66852,-2.18562) .. (-1.59434,-2.19670);
\draw[omitted] (-1.59434,-2.19670) arc[start angle=261.49999999999994,end angle=575.5,radius=0.3];
\fill[black!28] (-2.54414,-1.58976) circle[radius=1.05pt];
\fill[black!28] (-0.26147,-2.98858) circle[radius=1.05pt];
\fill[black!28] (-1.79423,-2.07421) circle[radius=1.05pt];
\fill[black!28] (-1.59434,-2.19670) circle[radius=1.05pt];
\node[text=black!28] at (-1.55000,-1.90000) {$L_{8}$};
\node at (0,-3.88) {(c)};
\end{scope}
\begin{scope}[shift={(8.2,-8.4)}]
\Qarc{draw=black,line width=.95pt}{35}{65}
\Qarc{draw=black,line width=.95pt}{115}{145}
\Qarc{draw=black,line width=.95pt}{215}{245}
\Qarc{draw=black,line width=.95pt}{295}{325}
\Qarc{draw=black,line width=1.25pt}{145}{154}
\Qarc{omitted}{154}{174}
\Qarc{draw=cycleblue,line width=1.25pt}{174}{190}
\Qarc{omitted}{190}{206}
\Qarc{draw=black,line width=1.25pt}{206}{215}
\Qarc{draw=black,line width=1.25pt}{65}{74}
\Qarc{omitted}{74}{84}
\Qarc{draw=cycleblue,line width=1.25pt}{84}{96}
\Qarc{omitted}{96}{106}
\Qarc{draw=black,line width=1.25pt}{106}{115}
\Qarc{draw=black,line width=1.25pt}{325}{334}
\Qarc{omitted}{334}{350}
\Qarc{draw=cycleblue,line width=1.25pt}{350}{370}
\Qarc{omitted}{370}{386}
\Qarc{draw=black,line width=1.25pt}{386}{395}
\Qarc{draw=black,line width=1.25pt}{245}{295}
\fill[black] (2.45746,1.72073) circle[radius=.9pt];
\fill[black] (1.26785,2.71892) circle[radius=.9pt];
\fill[black] (-1.26785,2.71892) circle[radius=.9pt];
\fill[black] (-2.45746,1.72073) circle[radius=.9pt];
\fill[black] (-2.45746,-1.72073) circle[radius=.9pt];
\fill[black] (-1.26785,-2.71892) circle[radius=.9pt];
\fill[black] (1.26785,-2.71892) circle[radius=.9pt];
\fill[black] (2.45746,-1.72073) circle[radius=.9pt];
\node[text=cyclered] at (-3.46000,0.00000) {$S_1$};
\node[text=cyclered] at (0.00000,3.72000) {$S_2$};
\node[text=cyclered] at (3.46000,0.00000) {$S_3$};
\node[text=cyclered] at (0.00000,-3.38000) {$S_4$};
\draw[draw=cycleblue,line width=1.25pt] (-2.98357,0.31359) .. controls (-2.63329,0.50324) and (-1.86860,0.78941) .. (-1.89670,0.71987);
\draw[draw=cycleblue,line width=1.25pt] (-0.31359,2.98357) .. controls (-0.74611,2.24330) and (-1.78941,0.86860) .. (-1.71987,0.89670);
\draw[draw=cycleblue,line width=1.25pt] (-1.89670,0.71987) arc[start angle=158.0,end angle=471.99999999999994,radius=0.32];
\fill[cycleblue] (-2.98357,0.31359) circle[radius=1.05pt];
\fill[cycleblue] (-0.31359,2.98357) circle[radius=1.05pt];
\fill[cycleblue] (-1.89670,0.71987) circle[radius=1.05pt];
\fill[cycleblue] (-1.71987,0.89670) circle[radius=1.05pt];
\node[text=cycleblue] at (-1.60000,0.60000) {$L_{1}$};
\draw[draw=cycleblue,line width=1.25pt] (-2.95442,-0.52094) .. controls (-2.15601,-0.55935) and (-0.55600,-0.82387) .. (-0.62503,-0.79456);
\draw[draw=cycleblue,line width=1.25pt] (2.95442,-0.52094) .. controls (1.82694,-0.66879) and (-0.44400,-0.82387) .. (-0.37497,-0.79456);
\draw[draw=cycleblue,line width=1.25pt] (-0.37497,-0.79456) arc[start angle=-67.0,end angle=247.0,radius=0.32];
\fill[cycleblue] (-2.95442,-0.52094) circle[radius=1.05pt];
\fill[cycleblue] (2.95442,-0.52094) circle[radius=1.05pt];
\fill[cycleblue] (-0.62503,-0.79456) circle[radius=1.05pt];
\fill[cycleblue] (-0.37497,-0.79456) circle[radius=1.05pt];
\node[text=cycleblue] at (-0.50000,-0.50000) {$L_{2}$};
\draw[draw=cycleblue,line width=1.25pt] (0.31359,2.98357) .. controls (0.51685,2.30747) and (1.87992,0.87505) .. (1.80945,0.90070);
\draw[draw=cycleblue,line width=1.25pt] (2.95442,0.52094) .. controls (2.61563,0.53833) and (1.96183,0.79867) .. (1.99233,0.73016);
\draw[draw=cycleblue,line width=1.25pt] (1.80945,0.90070) arc[start angle=70.00000000000001,end angle=384.00000000000006,radius=0.32];
\fill[cycleblue] (0.31359,2.98357) circle[radius=1.05pt];
\fill[cycleblue] (2.95442,0.52094) circle[radius=1.05pt];
\fill[cycleblue] (1.80945,0.90070) circle[radius=1.05pt];
\fill[cycleblue] (1.99233,0.73016) circle[radius=1.05pt];
\node[text=cycleblue] at (1.70000,0.60000) {$L_{3}$};
\draw[draw=black,line width=1.25pt] (-2.69638,1.31511) .. controls (-2.48533,1.52395) and (-1.95107,1.86210) .. (-1.98512,1.79528);
\draw[draw=black,line width=1.25pt] (-0.82691,2.88379) .. controls (-1.11749,2.52866) and (-1.86528,1.93409) .. (-1.79356,1.95602);
\draw[draw=black,line width=1.25pt] (-1.98512,1.79528) arc[start angle=153.0,end angle=467.00000000000006,radius=0.32];
\fill[black] (-2.69638,1.31511) circle[radius=1.05pt];
\fill[black] (-0.82691,2.88379) circle[radius=1.05pt];
\fill[black] (-1.98512,1.79528) circle[radius=1.05pt];
\fill[black] (-1.79356,1.95602) circle[radius=1.05pt];
\node[text=black] at (-1.70000,1.65000) {$L_{4}$};
\draw[draw=black,line width=1.25pt] (-2.69638,-1.31511) .. controls (-1.65587,-1.18895) and (0.44400,-1.07613) .. (0.37497,-1.10544);
\draw[draw=black,line width=1.25pt] (2.69638,-1.31511) .. controls (1.98658,-1.29854) and (0.55600,-1.07613) .. (0.62503,-1.10544);
\draw[draw=black,line width=1.25pt] (0.37497,-1.10544) arc[start angle=113.0,end angle=427.0,radius=0.32];
\fill[black] (-2.69638,-1.31511) circle[radius=1.05pt];
\fill[black] (2.69638,-1.31511) circle[radius=1.05pt];
\fill[black] (0.37497,-1.10544) circle[radius=1.05pt];
\fill[black] (0.62503,-1.10544) circle[radius=1.05pt];
\node[text=black] at (0.50000,-1.40000) {$L_{5}$};
\draw[draw=black,line width=1.25pt] (0.82691,2.88379) .. controls (1.19366,2.60802) and (1.86528,1.93409) .. (1.79356,1.95602);
\draw[draw=black,line width=1.25pt] (2.69638,1.31511) .. controls (2.42378,1.43278) and (1.95107,1.86210) .. (1.98512,1.79528);
\draw[draw=black,line width=1.25pt] (1.79356,1.95602) arc[start angle=73.0,end angle=387.0,radius=0.32];
\fill[black] (0.82691,2.88379) circle[radius=1.05pt];
\fill[black] (2.69638,1.31511) circle[radius=1.05pt];
\fill[black] (1.79356,1.95602) circle[radius=1.05pt];
\fill[black] (1.98512,1.79528) circle[radius=1.05pt];
\node[text=black] at (1.70000,1.65000) {$L_{6}$};
\node[font=\scriptsize] at (0.93165,3.24906) {$v_1$};
\node[font=\scriptsize] at (0.35331,3.36148) {$v_2$};
\node[font=\scriptsize] at (-0.35331,3.36148) {$v_3$};
\node[font=\scriptsize] at (-0.93165,3.24906) {$v_4$};
\node at (0,-3.88) {(d)};
\end{scope}
\draw[-{Latex[length=1.8mm,width=1.3mm]},line width=.65pt] (3.80,.08)--(4.40,.08);
\draw[-{Latex[length=1.8mm,width=1.3mm]},line width=.65pt] (3.80,-8.32)--(4.40,-8.32);
\draw[-{Latex[length=1.8mm,width=1.3mm]},line width=.65pt]
 (4.90,-3.372)--(3.30,-4.908);
\end{tikzpicture}
\endgroup
\caption{Constructing disjoint cycles using a star with the leaves $D_i$ and the center $Q$.}
\label{fig:cycles-to-paths}
\end{figure}

We next construct a family of disjoint cycles in the union
of $Q'$ and the paths $L_i$, keeping every edge of $Q'$ that
lies on no member of $\mathcal S$ and all but at most
$|\mathcal S|$ of the paths $L_i$.
The paths $L_i$ are disjoint and meet $Q'$ only at their
endpoints on members of $\mathcal S$: besides $Q$, the
construction of $Q'$ used only cycles $D_j\in\mathcal D$ and paths in
their families $\mathcal P_j$, whereas every $L_i$ uses a cycle
$D_i\notin\mathcal D$.
Let $F_Q$ be an auxiliary multigraph with vertex set $\mathcal S$.
Each path $L_i$ gives an edge joining the two distinct
members of $\mathcal S$ containing its endpoints.
Choose an inclusion-maximal spanning subgraph $F'_Q$ of $F_Q$
in which every degree is even.
As for $F'$, maximality implies that the edges outside
$F'_Q$ contain no cycle.
Let $\mathcal L_Q$ be the family of paths $L_i$ corresponding
to the edges of $F'_Q$.
Then $\mathcal L_Q$ contains all but at most $|\mathcal S|$
of the paths $L_i$, and each member of $\mathcal S$ contains
an even number of endpoints of paths in $\mathcal L_Q$;
see Figure~\ref{fig:cycles-to-paths}(c).

For each $P\in\mathcal S$, list the endpoints on $P$
of paths in $\mathcal L_Q$ as $v_1,\ldots,v_{2r}$
in order along $P$.
For $1\le j\le r$, delete from $Q'$ the edges and internal
vertices of the subpath of $P$ from $v_{2j-1}$ to $v_{2j}$;
when $r=0$, leave $P$ unchanged.
After adding the paths in $\mathcal L_Q$, each listed endpoint
has one incident edge from $Q'$ and one from $\mathcal L_Q$,
and every other remaining vertex also has degree two.
This completes the required family of disjoint cycles, containing
all paths in $\mathcal L_Q$ and every edge of $Q'$ outside
the members of $\mathcal S$; see Figure~\ref{fig:cycles-to-paths}(d).

We remove the cycles of length less than $3x/2$ from the
family above and bound the number of vertices removed.
Suppose first that $\mathcal S\ne\emptyset$.
We show that at most $|\mathcal S|$ cycles are removed.
A constructed cycle consisting only of subpaths of members
of $\mathcal S$ and paths in $\mathcal L_Q$ contains at least
two paths in $\mathcal L_Q$, since each path in $\mathcal L_Q$
joins distinct members of $\mathcal S$.
Its length is therefore at least $2(1-2/s)x\ge3x/2$.
Every removed cycle must consequently contain an edge of
$Q'$ lying on no member of $\mathcal S$.
These edges form $|\mathcal S|$ disjoint paths, each contained
in a single constructed cycle.
Hence at most $|\mathcal S|$ cycles are removed, containing
at most $3x|\mathcal S|/2$ vertices in total.

If $\mathcal S=\emptyset$, every $D_i\notin\mathcal D$
belongs to $\mathcal E$, and the only resulting cycle is $Q'$.
By Claim~\ref{clm:insert-path} and~\eqref{eq:star-selection},
\[
 |Q'|\ge(1-2/s)\left(M_Q-\sum_{D\in\mathcal E}|D|\right)
 \ge(1-7/s)M_Q\ge3x/2,
\]
where the last inequality uses $M_Q\ge2x$ and $s\ge64$.
Hence no cycle is removed in this case.
Thus the number of vertices removed is at most
$3x|\mathcal S|/2\le2x|\mathcal S|$ in both cases.
We now bound the number of vertices of
$Q\cup D_1\cup\cdots\cup D_t$ not covered by the remaining cycles.
The number of uncovered vertices on members
of $\mathcal S$ and on cycles in $\mathcal E$ is at most
$\sum_{P\in\mathcal S}|P|+\sum_{D\in\mathcal E}|D|$.
At most $|\mathcal S|$ paths $L_i$ lie outside $\mathcal L_Q$,
and the corresponding cycles $D_i$ contain at most
$2x|\mathcal S|$ vertices in total.
As shown above, the deletion of cycles of length less than
$3x/2$ removed at most $2x|\mathcal S|$ vertices.
The sum of these bounds is at most $5M_Q/s$
by~\eqref{eq:star-selection}.

For each $P\in\mathcal Q\setminus\mathcal S$,
Claim~\ref{clm:insert-path} bounds the number of vertices
omitted from $P$ and the cycles in $\mathcal D_P$
by a $2/s$ fraction of their total.
For each $L_i\in\mathcal L_Q$,
Claim~\ref{clm:between-parts} gives the same bound
for the vertices omitted from $D_i$.
The original vertex sets to which these bounds apply
are pairwise disjoint, so these additional omissions
total at most $2M_Q/s$.
This completes the construction for the star centered at $Q$:
the resulting cycles are disjoint, each has length at least
$3x/2$, and together they cover at least $(1-7/s)M_Q$
vertices of $Q\cup D_1\cup\cdots\cup D_t$.

\medskip

The cycles in the final family come from three sources:
\begin{itemize}[leftmargin=*]
\item the cycles constructed from $\mathcal B\cup\mathcal R$;
\item the remaining cycles from the constructions in the stars;
\item the original long cycles outside $\mathcal B$ that
belong to no star.
\end{itemize}
Every cycle in this family has length at least $3x/2$.
The cycles are pairwise disjoint:
the constructions use different members of $\mathcal H$,
and the paths from $\mathcal P$ used in the constructions
are disjoint and have all internal vertices outside every
member of $\mathcal H$.

The original long cycles in the last item are covered entirely.
Summing the bounds for the construction from
$\mathcal B\cup\mathcal R$ and for the individual stars,
the number of original vertices not covered by the final
family is at most
\[
 \frac{6M}{s}
 +\frac2s\sum_{D\in\mathcal R\setminus\mathcal B}|D|
 +\frac7s\sum_Q M_Q
 \le \frac{13M}{s},
\]
where $Q$ ranges over the centers of the chosen stars.
For the last inequality, the cycles in
$\mathcal R\setminus\mathcal B$ and those belonging to
the stars are pairwise disjoint, so
$\sum_{D\in\mathcal R\setminus\mathcal B}|D|
+\sum_Q M_Q\le M$.
Thus the final family has total length at least
\[
 (1-13/s)M\ge(1-13x^{-1/80})M.
\]
The family is nonempty because $s\ge64$, completing the proof.
\end{proof}

\subsection{From cycle families to a long cycle}\label{sec:criterion-completion}
Iterating Lemma~\ref{lem:prefix-bounded} gives the following
bound for a single cycle.

\begin{proposition}\label{prop:merge-family}
There is an integer $L_0$, depending only on $K$, such that
for every integer $L\ge L_0$, if $G$ contains a nonempty family
of disjoint cycles of minimum length at least $L$ and total
length $M$, then $c(G)\ge(1-L^{-1/90})M$.
\end{proposition}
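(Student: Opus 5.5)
Iterate Lemma~\ref{lem:prefix-bounded} starting from the given family, and stop once a single cycle remains. Take $L_0\ge x_0$, where $x_0$ is the constant from Lemma~\ref{lem:prefix-bounded}. Set $\mathcal H_0$ to be the given family, with $x_0'=L$ and $M_0=M$. While $\mathcal H_k$ contains at least two cycles, apply Lemma~\ref{lem:prefix-bounded} with parameter $x=x_k'$. Here $x_k'$ is an integer lower bound on the minimum length of the cycles in $\mathcal H_k$, for instance $x_k'=\lceil (3/2)^kL\rceil$; the ceiling is harmless because cycle lengths are integers. This produces a family $\mathcal H_{k+1}$ with minimum length at least $3x_k'/2$, so we may take $x_{k+1}'\ge (3/2)x_k'$. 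Its total length is
\[
M_{k+1}\ge(1-13(x_k')^{-1/80})M_k .
\]
Each $x_k'\ge L\ge x_0$, so the lemma applies at every step.

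The process terminates. The minimum length grows geometrically while no cycle can be longer than $n$, and as long as $\mathcal H_k$ has at least two cycles we have $x_k'\le n$. So after at most $O(\log n)$ steps we reach a family with exactly one cycle. That family is nonempty: the lemma outputs a family of total length at least $(1-13x^{-1/80})M_k>0$, since $13x^{-1/80}<1$ for large $x_0$. The single remaining cycle $C$ then satisfies
\[
|C|\ge M\prod_{k\ge0}\bigl(1-13(x_k')^{-1/80}\bigr).
\]
If the starting family already consists of one cycle, the bound holds trivially.

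It remains to bound the product. Using $1-a\ge e^{-2a}$ for $a\le1/2$, the product is at least
\[
\exp\Bigl(-26\sum_{k\ge0}\bigl((3/2)^kL\bigr)^{-1/80}\Bigr)=\exp\bigl(-26\,C_0L^{-1/80}\bigr),
\]
where $C_0=\sum_{k\ge0}(3/2)^{-k/80}=1/(1-(2/3)^{1/80})$ is an absolute constant of order $80/\ln(3/2)\approx 200$. This gives $|C|\ge(1-26C_0L^{-1/80})M$. Since $L^{-1/80}=L^{-1/90}\cdot L^{-1/720}$, it suffices to choose $L_0$ so large that $26C_0L_0^{-1/720}\le1$. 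With that choice $c(G)\ge(1-L^{-1/90})M$.

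The only mildly delicate points are bookkeeping. First, the lemma must be applied with the current guaranteed minimum length $x_k'$ rather than the actual minimum, which may be larger; since the lemma only needs a lower bound, this is fine. Second, the geometric growth $x_k'\ge(3/2)^kL$ is exactly what makes the error series converge to $O(L^{-1/80})$, independently of the number of iterations. I do not expect a genuine obstacle beyond these checks.
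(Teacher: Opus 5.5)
Your proposal is correct and follows the paper's proof. You iterate Lemma~\ref{lem:prefix-bounded} while at least two cycles remain, use the geometric growth $(3/2)^kL$ of the minimum length to make the losses $13\bigl((3/2)^kL\bigr)^{-1/80}$ summable to $O(L^{-1/80})$, and absorb this into $L^{-1/90}$. The only cosmetic differences are that you apply the lemma with the guaranteed integer lower bound $\lceil(3/2)^kL\rceil$ rather than the actual current minimum, and you bound the product via $1-a\ge e^{-2a}$ instead of the paper's direct $\prod_j(1-a_j)\ge1-\sum_j a_j$.
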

\begin{proof}
Choose $L_0$ sufficiently large, above the threshold in
Lemma~\ref{lem:prefix-bounded}, with $13L_0^{-1/80}<1$.
Starting with the given family, repeatedly apply that lemma
with $x$ equal to the current minimum cycle length while at
least two cycles remain.
After $j$ applications the minimum length is at least
$(3/2)^jL$, so the process terminates with one cycle, since
no cycle has more than $n$ vertices.
If there are $r$ applications, the final cycle has length at least
\[
\begin{aligned}
 M\prod_{j=0}^{r-1}
   \left(1-13L^{-1/80}(2/3)^{j/80}\right)
 &\ge M\left(1-13L^{-1/80}
       \sum_{j=0}^{\infty}(2/3)^{j/80}\right)\ge (1-L^{-1/90})M.
\end{aligned}
\]
The first inequality uses $\prod_j(1-a_j)\ge1-\sum_j a_j$
for $0\le a_j\le1$.
The last holds for sufficiently large $L_0$, since the geometric
series converges and $1/80>1/90$.
\end{proof}

We are ready to prove Theorem~\ref{thm:general}.

\begin{proof}[\normalfont\bfseries Proof of Theorem~\ref{thm:general}]
Choose an integer $L\ge L_0$ such that $L^{-1/90}\le1/2$,
where $L_0$ is given by Proposition~\ref{prop:merge-family}.
Lemma~\ref{lem:prefix-initial} supplies, for all sufficiently
large $n$, a family of disjoint cycles of minimum length at
least $L$ and total length at least $\nu n$, where $\nu>0$
depends only on $K$.
Proposition~\ref{prop:merge-family} then gives $c(G)\ge\nu n/2$.

Now suppose that $G$ is $d$-regular, with $d$ sufficiently large
in terms of $K$, and let $L=\lfloor d^{19/20}\rfloor$.
Replace each edge of $G$ by its two orientations and apply
Theorem~\ref{thm:few-cycles}. Some spanning family has
$O(n\log d/d)$ directed cycles. Each directed cycle of length
two gives a single edge of $G$, and every longer one gives an
undirected cycle. The cycles of length at least $L$ therefore
have total length at least $n-O(Ln\log d/d)$, since every other
member has fewer than $L$ vertices.
For sufficiently large $d$, this family is nonempty and $L\ge L_0$.
Proposition~\ref{prop:merge-family} gives
\[
 c(G)\ge\bigl(1-L^{-1/90}-O(L\log d/d)\bigr)n
 \ge(1-d^{-1/100})n,
\]
where the last inequality follows from
\[
 L^{-1/90}+O(L\log d/d)
 =O\bigl(d^{-19/1800}+d^{-1/20}\log d\bigr)
 =o(d^{-1/100}).\qedhere
\]
\end{proof}

\section{Vertex-transitive graphs}\label{sec:vertex-transitive}
In this section we prove Theorem~\ref{thm:main}.
Recall that $\beta_r=|B_G(v,r)|$ is independent of the
center $v$ when $G$ is vertex-transitive.
We write $\Aut(G)$ for the group of automorphisms of $G$,
that is, the permutations of $V(G)$ that preserve adjacency.

We first use $\beta_r$ to bound the number of components of
$G-S$ with at least a prescribed number of vertices.
If $\beta_R\le R^{5/4}$ for some sufficiently large
$R<n^{4/5}$, where $n=|G|$, we use a theorem of DeVos and
Mohar to find a cycle missing at most $n^{19/20}$ vertices.
Otherwise, the bounds on the number of components allow us
to verify~\eqref{eq:component-condition} and apply
Theorem~\ref{thm:general}.

\subsection{A general bound on the number of components}\label{app:ball}

We first establish a bound that holds for every finite
connected vertex-transitive graph.
\begin{lemma}\label{lem:ballcap}
Let $G$ be a finite connected vertex-transitive graph and let
$r\ge1$ be an integer. For every $S\subseteq V(G)$ and integer
$1\le\lambda\le\beta_r$, the number of components of $G-S$
other than a largest one that have at least $\lambda$ vertices
is at most $(4r+3)|S|/\lambda$.
\end{lemma}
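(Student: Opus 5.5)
The plan is a double-counting (charging) argument driven by vertex-transitivity. Each component of the required kind should be charged about $\lambda/(4r+3)$ vertices of $S$, in such a way that every vertex of $S$ is charged a bounded total amount. Let $X_1$ be a largest component of $G-S$. Let $\mathcal C$ be the family of all other components with at least $\lambda$ vertices, and put $k=|\mathcal C|$; the goal is $k\lambda\le(4r+3)|S|$. The basic tool is the identity
\[
 \sum_{v\in V(G)}|B_G(v,\rho)\cap A|=\beta_\rho|A|
 \qquad\text{for every } A\subseteq V(G)\text{ and integer }\rho\ge0,
\]
which holds because each $a\in A$ lies in exactly $\beta_\rho$ balls of radius $\rho$ by transitivity. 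Equivalently, a ball of radius $\rho$ with a uniformly random centre meets $A$ in $\beta_\rho|A|/n$ vertices on average.

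First I would treat components $C\in\mathcal C$ that are \emph{thin}, meaning every vertex of $C$ lies within distance $r$ of $S$. I would not charge every vertex of $C$, since that only yields the useless bound $k\le\beta_r|S|/\lambda$. Instead I would use connectivity of $C$ to extract a tree-like ``boundary skeleton''. Take a BFS forest of $C$ grown from $N(S)\cap C$. Every vertex of $C$ is then within distance $r$ of a root. Grouping the vertices of $C$ by root and using $\lambda\le\beta_r$, I aim to show that $C$ needs at least about $|C|/\beta_r$ roots, but with the roots spread so that the balls $B_G(s,2r+1)$ for $s\in S$ adjacent to these roots cover $C$. Comparing with the identity for $\rho=2r+1$, and bounding $\beta_{2r+1}$ against $\beta_r$ only through a linear-in-$r$ factor by packing balls of radius $r$ along geodesics, is where the constant $4r+3=2(2r+1)+1$ should arise.

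Second, I would treat components $C\in\mathcal C$ containing a vertex $u$ with $B_G(u,r)\subseteq C$; then $|C|\ge\beta_r\ge\lambda$. Here I would use that $C$ is not the largest component. Take a shortest path from $u$ to $X_1$. It has to cross $S$, and along it there are balls of radius $r$ that start inside $C$, since $B_G(u,r)\subseteq C$, and end meeting $S$. Transporting $B_G(u,r)$ along this path by automorphisms, the number of vertices of $C$ in the ball changes by at most the number of vertices gained or lost, and the ball must pass through $S$. A discrete intermediate-value argument then yields a centre $v$ with $|B_G(v,r)\cap S|\gtrsim\lambda/(4r+3)$, or else a family of about $\lambda/(2r+1)$ distinct vertices of $S$ at distance at most $2r+1$ from $C$. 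To charge these to $C$ injectively, I would insist that the charged vertices of $S$ lie on the boundary of $C$ specifically, i.e.\ within distance $r$ of $C$ and separating $C$ from $X_1$. Then each $s\in S$ can be charged by at most $O(1)$ components: the components at distance $\le r$ from $s$ can be shown to be few, because their portions inside $B_G(s,r)$ are disjoint.

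The main obstacle is the second case, and specifically making the charging injective. A single vertex of $S$ may border many components of $\mathcal C$, so the naive ``each $s$ is charged once'' fails. I expect to need to organise the charging as a fractional one. A vertex $s$ contributes weight $|B_G(s,r)\cap C|/\beta_r$ to each component $C$. This total weight is at most $1$ per $s$, by disjointness of components. I would then show that each $C\in\mathcal C$ receives total weight at least $\lambda/(4r+3)$, using the sliding-ball argument to find $2r+1$ ``layers'' around $C$ and the factor $2$ from the two cases. If this fails, my fallback is to apply the displayed identity with $A=S$ and $A=C$ simultaneously over a random centre, and bound the probability that a random ball of radius $r$ meets both $C$ and $S$ from below.
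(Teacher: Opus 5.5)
\textbf{There is a genuine gap.} The proposal is a plan whose quantitative steps fail, and it is missing the idea that makes the count work. Test it on a $d$-regular Cayley graph $G$ ($d\ge3$) of girth greater than $4r+4$, so that $G$ looks like the $d$-regular tree within distance $2r+1$ of every vertex. Write $n=|G|$.

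\emph{Your first case.} It needs $\beta_{2r+1}\le O(r)\,\beta_r$. Here $\beta_{2r+1}/\beta_r\approx(d-1)^{r+1}$, and covering geodesics by balls of radius $r$ gives no comparison of this kind.

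\emph{Your fractional charging.} It needs $\sum_{s\in S}|B_G(s,r)\cap C|\ge\lambda\beta_r/(4r+3)$ for every $C\in\mathcal C$. Take $C=B_G(y,r)$, $S=\partial C$ (the sphere of radius $r+1$) and $\lambda=\beta_r$. For $n$ large, $C$ is not a largest component. Each $s\in S$ has only $O((d-1)^{r/2})$ vertices of $C$ within distance $r$. So the left side is $O(d(d-1)^{3r/2})$, while the right side is at least $(d-1)^{2r}/(4r+3)$. The lemma itself is safe here because $|S|\ge\beta_r$; the problem is that dividing each contribution by $\beta_r$ discards too much.

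\emph{The $O(1)$ claim.} That each $s\in S$ is charged by $O(1)$ components does not follow from disjointness inside $B_G(s,r)$, which only bounds their number by $\beta_r$. In the torus $\mathbb Z_m^2$ with $S$ the vertices in even rows, every $s\in S$ is within distance $r$ of about $r$ components, each a cycle of length $m$.

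\emph{The rest.} The sliding-ball dichotomy in your second case is asserted rather than derived. Your fallback (a random ball meeting both $C$ and $S$) is again a per-component estimate. The obstacle you name, vertices of $S$ shared among many components, is real, and per-component isoperimetric bounds do not resolve it without losing a factor such as the degree.

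\textbf{The missing idea} is a global count: average one fixed skeleton over a uniformly random automorphism. This is what the paper does.
\begin{itemize}
\item Build greedily a tree $T$ with $V(T)\subseteq V(G)$ whose vertices are pairwise at distance at least $2r+1$ in $G$. Then $|V(T)|\le n/\beta_r$, since their radius-$r$ balls are disjoint. Arrange that the radius-$2r$ balls around $V(T)$ cover $V(G)$, and that each edge $e$ of $T$ is realised by a geodesic $Q_e$ with $2r+2$ vertices.
\item Fix, for every $v$, a path $P_v$ with at most $2r+1$ vertices from $v$ to $V(T)$.
\item For $\phi\in\Aut(G)$, let $p$ count the components of $G-S$ with at least $\lambda$ vertices that avoid $\phi(V(T))$. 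Let $q$ count the edges $e$ with $\phi(Q_e)\cap S\ne\emptyset$.
\item Components meeting $\phi(V(T))$ inject into the $q+1$ components of the forest obtained by deleting those edges. When $N>0$, exactly $N+1$ components have at least $\lambda$ vertices, so $N+1-p\le q+1$. This is where the largest component is absorbed.
\item Every vertex $\phi(v)$ of a component avoiding $\phi(V(T))$ has $\phi(P_v)\cap S\ne\emptyset$. Hence $\lambda p\le|\{v:\phi(P_v)\cap S\ne\emptyset\}|$.
\item Since $\phi(u)$ is uniform on $V(G)$, a fixed path with $k$ vertices meets $S$ with probability at most $k|S|/n$. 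Taking expectations gives $\lambda N\le(2r+1)|S|+\lambda(2r+2)|S|/\beta_r\le(4r+3)|S|$.
\end{itemize}
This argument needs no comparison of balls of different radii, and it never has to control how components share vertices of $S$.
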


\begin{proof}
Let $N$ be the number of components in the statement.
We may assume that $N>0$.
We first construct an auxiliary tree $T$ with
$V(T)\subseteq V(G)$ and $d_G(u,v)=2r+1$ for every
$uv\in E(T)$.
Initially, $T$ consists of an arbitrary vertex of $G$.
Whenever the balls $B_G(c,2r)$ with $c\in V(T)$ do not cover
$V(G)$, a shortest path from $V(T)$ to an
uncovered vertex contains a vertex $v$ whose minimum distance
to $V(T)$ is exactly $2r+1$.
We extend $T$ by adding $v$ and an edge $uv$, where
$u\in V(T)$ satisfies $d_G(u,v)=2r+1$.
The process ends when these balls cover $V(G)$.
Distinct vertices of $T$ have distance at least $2r+1$ in $G$,
so their balls of radius $r$ are disjoint and
$|V(T)|\le |G|/\beta_r$.
For each edge $e=uv$ of $T$, let $Q_e$ be a shortest path
from $u$ to $v$ in $G$, of length $2r+1$.
The covering by balls of radius $2r$ also allows us to fix,
for every $v\in V(G)$, a path $P_v$ of length at most $2r$
from $v$ to a vertex of $T$.

Fix $\phi\in\Aut(G)$, and write $\phi(P)$ for the path
obtained by applying $\phi$ to every vertex of a path $P$.
Let $p$ be the number of components of $G-S$ with at least
$\lambda$ vertices that contain no vertex of $\phi(V(T))$,
and let $q$ be the number of edges $e\in E(T)$ for which
$\phi(Q_e)$ meets $S$.
We show that 
\begin{equation}\label{eq:ballcap-components}
 N\le p+q.
\end{equation}

To see this, deleting these $q$ edges from $T$ leaves a forest with
$q+1$ components. There is an injection from the components
of $G-S$ meeting $\phi(V(T))$ into the components of this
forest. Indeed, choose one vertex $\phi(c)$ in the intersection
of each such component with $\phi(V(T))$, and map that
component to the component of the forest containing $c$.
If two chosen centers $c,d$ lie in the same component of the
forest, the paths $\phi(Q_e)$ along the $(c,d)$-path in $T$
join $\phi(c)$ to $\phi(d)$ in $G-S$.
Thus $\phi(c)$ and $\phi(d)$ belong to the same component
of $G-S$, which proves injectivity.
Since $N>0$, exactly $N+1$ components of $G-S$ have at least
$\lambda$ vertices, and $N+1-p$ of them meet $\phi(V(T))$.
Hence $N+1-p\le q+1$, which proves~\eqref{eq:ballcap-components}.

For every vertex $\phi(v)$ in one of the $p$ components
containing no center, the path $\phi(P_v)$ ends outside
that component and therefore meets $S$.
These components have at least $\lambda p$ vertices in total, so
\begin{equation}\label{eq:ballcap-paths}
 \lambda p\le
 \bigl|\{v\in V(G):\phi(P_v)\text{ meets }S\}\bigr|.
\end{equation}

We now choose $\phi$ uniformly at random from $\Aut(G)$.
For a fixed vertex $u$ and any $a,b\in V(G)$,
vertex-transitivity gives an automorphism $\psi$ with
$\psi(a)=b$.
The map $\phi\mapsto\psi\phi$ is a bijection from the
automorphisms taking $u$ to $a$ to those taking $u$ to $b$.
Thus $\phi(u)$ is uniformly distributed on $V(G)$.
For any fixed path $P$, the union bound now gives
\[
 \Pr\bigl(\phi(P)\text{ meets }S\bigr)
 \le\sum_{u\in V(P)}\Pr\bigl(\phi(u)\in S\bigr)
 =\frac{|V(P)||S|}{|G|}.
\]
Each $P_v$ has at most $2r+1$ vertices, and each $Q_e$
has $2r+2$ vertices. Moreover, $|E(T)|\le |G|/\beta_r$.
Taking expectations in~\eqref{eq:ballcap-components}
and~\eqref{eq:ballcap-paths}, and using the definition of $q$, gives
\[
\begin{aligned}
 \lambda N
 &\le \lambda\mathbb E p+\lambda\mathbb E q\le (2r+1)|S|
      +\frac{\lambda}{\beta_r}(2r+2)|S|\le (4r+3)|S|,
\end{aligned}
\]
where the last inequality uses $\lambda\le\beta_r$.
\end{proof}

\subsection{Structure from a small ball}\label{sec:smallballs}
If $\beta_r>r^{5/4}$ for every integer $R_0\le r<n^{4/5}$,
where $R_0$ is a sufficiently large absolute constant,
Lemma~\ref{lem:ballcap} allows us to verify
\eqref{eq:component-condition} and apply
Theorem~\ref{thm:general}, as we show in the proof of
Theorem~\ref{thm:main}.
This subsection treats the remaining case, where
$\beta_R\le R^{5/4}$ for some integer $R_0\le R<n^{4/5}$,
using a structural theorem of DeVos and Mohar.
\begin{theorem}\label{prop:early}
There is an absolute integer $R_0$ such that every connected
vertex-transitive graph $G$ on $n$ vertices satisfying
$\beta_R\le R^{5/4}$ for some integer $R_0\le R<n^{4/5}$
satisfies $c(G)\ge n-n^{19/20}$.
\end{theorem}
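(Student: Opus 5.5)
The plan follows the outline in Subsection~\ref{subsec:pf}. First, apply the structural theorem of DeVos and Mohar (to be stated as Theorem~\ref{thm:DM}) to the small ball $\beta_R\le R^{5/4}$. Since $\beta_R$ grows only slightly faster than linearly at scale $R$ and $R<n^{4/5}$ is much smaller than $n$, their theorem should give the following. There is a normal-type partition of $V(G)$ into blocks $V_0,\dots,V_{m-1}$ of equal size $k$, with $k$ bounded by a small power of $R$. These blocks are cyclically ordered, and every edge of $G$ joins vertices in the same block or in blocks at cyclic distance at most some $w$ polynomial in $R$. Moreover, $\Aut(G)$ acts on the blocks through a cyclic or dihedral group acting transitively. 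Choosing $R_0$ large and using $R<n^{4/5}$, I would arrange $km\,w$-type error terms to be at most $n^{19/20}/2$. In particular, the number of blocks $m\ge n/k$ is large, and $k,w\le R^{O(1)}$ with the exponent small enough.

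Next, form the quotient graph $H$ on the blocks, joining two blocks when some edge of $G$ runs between them. The automorphisms of $G$ induce a transitive action on blocks through the cyclic or dihedral group $\Gamma$. Hence $H$, possibly after passing to the orbit structure, is a connected Cayley graph of a cyclic or dihedral group; connectivity follows from that of $G$. In the cyclic case, Maru\v{s}i\v{c}'s theorem~\cite{Marusic1983} gives a Hamiltonian cycle of $H$. In the dihedral case, I would invoke Lemma~\ref{lem:dihedral-cycle} to get a cycle of $H$ missing only a negligible number of blocks, say $o(m)$ with an explicit power saving.

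Then comes the lifting step, where Lemma~\ref{lem:normal-quantitative} is the intended tool. Along the cycle $B_1B_2\cdots B_\ell$ in $H$, I would group consecutive blocks into segments of length roughly $w$ (or a larger polynomial in $R$). Within the union of a few consecutive blocks, the graph induced by $G$ is vertex-transitive-like and has bounded width. Using the transitive group action on each block together with the inter-block edges, I would find a spanning or near-spanning path through each segment that enters at one prescribed vertex and exits at a vertex adjacent to the next segment. Concatenating these paths gives a cycle in $G$. Each segment loses at most $O(k\,w)$ vertices, so the total loss is $O(kw\cdot m/w')$ plus the blocks missed by the dihedral cycle. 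With the polynomial bounds on $k,w$ in terms of $R\le n^{4/5}$, this is at most $n^{19/20}$.

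I expect the main obstacle to be exactly this lifting. The quotient Hamiltonian cycle only guarantees some edge between consecutive blocks, not control over which vertices are used. Absorbing almost all vertices of each block requires Hamiltonian-connectivity-type properties of the subgraph induced on a window of blocks. My first attempt would be to pick the window length large compared with $k$ and use Babai-style long cycles inside the window only as a fallback. Failing a clean absorption argument, I would accept discarding whole windows that are poorly connected. The quantitative exponents ($5/4$, $4/5$, $19/20$) suggest exactly this kind of budget: losses of order $\beta_R\cdot n/R\le nR^{1/4}/R$, and with $R\ge R_0$ these are summed against $n^{19/20}$.
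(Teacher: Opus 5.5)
There is a genuine gap, and it lies where you expected: the lifting. Your window scheme needs a near-spanning path with prescribed ends through a union of many consecutive blocks, and nothing supports this. Such a window is not vertex-transitive, and the quotient cycle gives no control over which vertices carry the edges between consecutive blocks. Moreover, with your per-window loss, the windows must grow with $n$ to keep the total below $n^{19/20}$, so this claim is essentially as hard as the original problem. The fallback of discarding ``poorly connected'' windows comes with no bound on their number. Your final budget also cannot work: a loss of order $\beta_R n/R\le nR^{-3/4}$ is linear in $n$ when $R$ is close to $R_0$.

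The first step is too vague for any quantitative lifting. Theorem~\ref{thm:DM} needs a set with small vertex boundary and deep points on both sides. The paper takes $A=B_G(y,r)$ with $r\in[R/4,R/2]$ chosen by averaging, so that $k=|\partial A|=\beta_{r+1}-\beta_r\le5R^{1/4}$; this works because the increments over at least $R/5$ radii sum to at most $\beta_R\le R^{5/4}$. A vertex $z$ with $d_G(y,z)>R$ exists because $\beta_R<n$; it serves as the outer deep point and gives $\operatorname{diam}(G)>R\ge31(k+1)^2$. The resulting bound $st\le k/2$ with $k<5n^{1/5}$ keeps every error term small and verifies the hypotheses of Lemma~\ref{lem:dihedral-cycle}. ``Polynomial in $R$'' does not suffice, since $R$ may be close to $n^{4/5}$.

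The missing idea is to lift along the orbits of a normal subgroup rather than along blocks. Let $H\lhd\Aut(G)$ be the kernel of the action on the blocks. Its orbits have a common size $m\le s$ and diameter at most $2st\le k$. Since the rotations act semiregularly on $G/H$ with at most two orbits, $G/H$ is a connected Cayley graph of a cyclic or dihedral group. Because $H$ preserves every orbit, the bipartite graph between two adjacent $H$-orbits is regular, so Hall's theorem gives perfect matchings. Take a longest cycle of length $q$ in $G/H$ and let $S$ be the union of its orbits; the matchings form at most $m$ disjoint cycles covering $S$, each passing through all $q$ orbits. Blocks would not do: an automorphism fixing a block may reflect the cyclic order and swap the two neighbouring blocks. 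Then vertices of one block can have different numbers of neighbours in the next, and no perfect matching need exist.

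One then merges the cycles one at a time into a cycle $C$, with $U$ the set of vertices of the unmerged cycles. Every unmerged cycle meets every orbit, so a maximal subset of $V(C)\cap S$ with pairwise distances greater than $2k$ yields at least $|V(C)\cap S|/\beta_{2k}$ disjoint $(C,U)$-paths of length at most $k$. Cut $C$ and the unmerged cycles into intervals with at most $B\le\lceil3\sqrt{n\beta_{2k}}\rceil$ vertices of $S$ each. By pigeonholing, two paths share a pair of intervals, and they splice an unmerged cycle into $C$ at a cost of at most $2B$ vertices. With $\beta_{2k}\le s(4kt+1)=O(k^2)$ and $m\le k/2$, the total loss is $O(k^2\sqrt n)$. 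In the dihedral case, Lemma~\ref{lem:dihedral-cycle} adds at most $4mt\le2k$ more, which gives $c(G)\ge n-n^{19/20}$. This is the content of Lemma~\ref{lem:normal-quantitative}: you name it as the tool, but then replace its mechanism with an unproved absorption scheme.
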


The proof uses the following three results.
From a cyclically ordered partition supplied by
Theorem~\ref{thm:DM}, we construct a quotient graph of $G$
that is a Cayley graph of a cyclic or a dihedral group.
In the cyclic case, the quotient has a Hamilton cycle;
in the dihedral case, Lemma~\ref{lem:dihedral-cycle}
gives a cycle containing almost all vertices of the quotient graph.
Lemma~\ref{lem:normal-quantitative} then lifts this cycle
to $G$ and gives the required bound on its length.

We give some group-theoretical notation here. For a subgroup $H$ of $\Aut(G)$, the \emph{$H$-orbit} of
a vertex $v$ is $H(v)=\{h(v):h\in H\}$.
These orbits partition $V(G)$.
The quotient graph $G/H$ has the $H$-orbits as its vertices;
two distinct orbits are adjacent if an edge of $G$ joins them.
We write $H\lhd\Aut(G)$ when $H$ is a \emph{normal subgroup}, that is, $\phi H\phi^{-1}=H$ for every $\phi\in\Aut(G)$.
In this case, $\phi(H(v))=H(\phi(v))$ for every $v$ and $\phi$.
Thus, when $G$ is vertex-transitive, all $H$-orbits have the
same size.
The diameter of a nonempty set $W\subseteq V(G)$ in $G$ is
$\max\{d_G(u,v):u,v\in W\}$.
We write $\operatorname{diam}(G)$ for the diameter of $V(G)$ in $G$.

\begin{lemma}\label{lem:normal-quantitative}
Let $G$ be a finite connected vertex-transitive graph, let
$H\lhd\Aut(G)$, and let $\ell\ge0$ be an integer.
Suppose that the $H$-orbits have size $m$ and diameter at most
$\ell$ in $G$. Write $q=c(G/H)$. Then
\[
 c(G)\ge qm-8m\sqrt{qm\beta_{2\ell}}.
\]
\end{lemma}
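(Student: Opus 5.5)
The plan is to lift a longest cycle of $G/H$ to a $2$-factor of $G$ on its preimage, and then merge the components of that $2$-factor into one cycle, paying a controlled number of vertices for each merge.

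\emph{Step 1: lifting to a $2$-factor.} Let $O_1,\dots,O_q$ be the $H$-orbits met by a longest cycle of $G/H$, in cyclic order. Fix $i$ and an edge $uv$ of $G$ with $u\in O_i$ and $v\in O_{i+1}$. Since $H$ acts transitively on $O_i$ and on $O_{i+1}$, the translates $h(u)h(v)$ for $h\in H$ show that every vertex of $O_i$ has a neighbour in $O_{i+1}$, and conversely. Moreover the bipartite graph $G[O_i,O_{i+1}]$ is $H$-invariant. Each side is a single $H$-orbit, so all degrees on a side are equal. Both sides have size $m$, so this bipartite graph is regular, and by Hall's theorem it has a perfect matching $M_i$. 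The union $M_1\cup\dots\cup M_q$ is a $2$-regular graph, hence a family of disjoint cycles covering all $qm$ vertices of $O_1\cup\dots\cup O_q$. Each such cycle winds around the quotient cycle some number of times. There are at most $m$ of them, corresponding to the cycles of the permutation $M_q\circ\dots\circ M_1$ of $O_1$. If $q\le 2$ the quotient gives no genuine cycle; this case, and the case where $q$ is small compared with $\beta_{2\ell}$, is covered because the bound is then trivial (it is at most $0$).

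\emph{Step 2: merging.} We merge the at most $m$ lifted cycles one pair at a time, using paths of length at most $\ell$ inside a single orbit. Cut the quotient cycle into $t$ windows of consecutive orbits, each of $w\approx q/t$ orbits. In each merge step we use a fresh window. Pick an orbit $O_i$ in the middle of the window containing vertices $a,b$ on two different current cycles, and a path $R$ of length at most $\ell$ from $a$ to $b$. Such a pair exists by connectivity of $G$ together with the orbit structure: some orbit meets two current cycles. All vertices of $R$ lie in $B_G(a,\ell)$, and any lifted strand that $R$ crosses must also be cut. We therefore discard every vertex of the current cycles within distance $\ell$ of $R$, at most $\beta_{2\ell}$ vertices. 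We then reconnect the loose strand ends by re-routing along the remaining matched strands inside the window. In the worst case we sacrifice the whole window, costing at most $wm$ vertices. This reconnection, turning two cycles into one without splitting off others, is the delicate part.

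\emph{Step 3: accounting.} At most $m$ merges are needed. The loss is at most about $m\,(wm+\beta_{2\ell})$, plus the condition that we have enough disjoint windows, $t\ge m$, i.e.\ $w\le q/m$. A cleaner balancing is to treat the obstruction as crossings: random positioning of the windows, using the same uniform automorphism averaging as in Lemma~\ref{lem:ballcap}, gives expected loss $\beta_{2\ell}\cdot qm/w$ from strands touched by connecting paths against $wm$ per window. Optimising $w\approx\sqrt{q\beta_{2\ell}/m}$, with the constants absorbed, gives a total loss of at most $8m\sqrt{qm\beta_{2\ell}}$, which yields the claimed bound $c(G)\ge qm-8m\sqrt{qm\beta_{2\ell}}$.

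The main obstacle is Step 2: showing that a short connecting path in one orbit can always be spliced in so that exactly two cycles become one, with only a window's worth of vertices lost. I would first try to do this by using $H$-translates of a single connecting path to find one whose endpoints lie on distinct strands and whose interior avoids all other windows.
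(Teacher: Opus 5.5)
Your Step 1 is exactly the paper's first step. The gap is Step 2, which carries all the content of the lemma and which you leave unresolved.

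\begin{itemize}
\item A single path $R$ from $a$ to $b$ cannot turn two cycles into one. You need two disjoint connecting paths whose endpoints are close on both cycles, so that only two short arcs are discarded.
\item Letting $R$ run through other strands forces you to cut them, and you give no way to reassemble the fragments into one cycle. The matched strands contain no edges between distinct cycles, and you do not say which other edges of $G$ to use or why a single cycle results.
\item Cutting other cycles also destroys the property that every cycle still to be merged meets all $q$ orbits. That property is what guarantees short connections in the first place.
\item The accounting in Step 3 is therefore not derived from any construction. The term $\beta_{2\ell}qm/w$ and the choice of $w$ are tuned to reproduce $8m\sqrt{qm\beta_{2\ell}}$. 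Your first estimate $m(wm+\beta_{2\ell})$ would be minimised at $w=1$, which shows the window width plays no real role in the argument as written.
\end{itemize}

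The missing idea is to grow a single cycle $C$ while keeping every not-yet-merged lifted cycle intact. Let $U$ be their vertex set, and use only $(C,U)$-paths whose interiors avoid $V(C)\cup U$, so that no strand is ever crossed.

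Many disjoint such paths exist. Take a maximal subset of $V(C)\cap S$ with pairwise distances greater than $2\ell$. It has at least $|V(C)\cap S|/\beta_{2\ell}$ elements, because the $2\ell$-balls around it cover $V(C)\cap S$. Each chosen $v$ has a vertex of $U$ in its own $H$-orbit, hence a shortest $(v,U)$-path of length at most $\ell$ inside $B_G(v,\ell)$. These balls are disjoint, and truncating each path at its last vertex on $C$ gives a packing $\mathcal P$ with $|\mathcal P|\ge|V(C)\cap S|/\beta_{2\ell}$.

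Now set $B=\lceil 3\sqrt{|V(C)\cap S|\,|U|/|\mathcal P|}\,\rceil\le\lceil 3\sqrt{qm\beta_{2\ell}}\,\rceil$. Cut $C$ into arcs containing at most $B$ vertices of $S$, and each unmerged cycle into arcs of at most $B$ vertices. There are at most $4|V(C)\cap S|\,|U|/B^2<|\mathcal P|$ pairs of arcs, so two paths of $\mathcal P$ end in the same pair. Splicing them in merges some cycle $D$ into $C$ while losing at most $2B\le 8\sqrt{qm\beta_{2\ell}}$ vertices of $S$. The condition $2B<q$ needed for the arc counts follows from positivity of the claimed bound. At most $m-1$ merges then give the lemma directly, with no windows and no averaging over automorphisms.
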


\begin{proof}
We may assume that the asserted lower bound is positive.
Fix a longest cycle $C_0$ in $G/H$, and let $S$ be the
union of the $H$-orbits corresponding to its vertices.
We first construct at most $m$ disjoint cycles covering $S$.
For each edge of $C_0$, consider the bipartite graph
with the two corresponding $H$-orbits as its parts and
all edges of $G$ between them.
By the definition of the $H$-orbits, vertices in the same
part have equal degrees.
Since both parts have size $m$ and there is at least one
edge between them, this bipartite graph is regular of
positive degree.
Hall's theorem~\cite{Hall} therefore gives a perfect matching.
Choose one such matching for each edge of $C_0$.
Their union has degree two at every vertex of $S$,
so it consists of disjoint cycles covering $S$.
Each vertex has one matching edge for each of the two
edges of $C_0$ incident with its orbit.
Thus, along any of these cycles, the orbits are visited
successively in the cyclic order given by $C_0$.
Every cycle therefore meets all $q$ orbits and has length
at least $q$. Since $|S|=qm$, there are at most $m$ cycles.

We prove the required bound on $c(G)$ by constructing
a cycle $C$ in $G$ as follows.
Initially, let $C$ be one of the cycles covering $S$, and let
$U$ be the set of vertices on the remaining cycles.
At each step, we join $C$ to one remaining cycle in this covering and remove
the vertices of that cycle from $U$, maintaining
$V(C)\cap U=\emptyset$ and $|V(C)\cap S|\ge q$.
The following claim bounds the number of vertices of $S$
lost in each step.

\medskip

\noindent {\bf Claim.}
Let $\mathcal P$ be a nonempty family of disjoint $(C,U)$-paths
whose internal vertices lie outside $V(C)\cup U$, and let
\[
 B=\left\lceil3\sqrt{\frac{|V(C)\cap S|\,|U|}{|\mathcal P|}}
   \right\rceil.
\]
If $2B<q$, then some remaining cycle $D$ can be joined to
$C$ to form a cycle containing at least
$|V(C)\cap S|+|D|-2B$ vertices of $S$ and disjoint from
every remaining cycle other than $D$.

\medskip

\begin{proof}[\normalfont\bfseries Proof of Claim.]
Partition $V(C)$ into $\lceil |V(C)\cap S|/B\rceil$
sets of consecutive vertices, each containing at most
$B$ vertices of $S$.
Since $|V(C)\cap S|\ge q>B$, there are at most
$2|V(C)\cap S|/B$ parts.
Similarly, each remaining cycle $D$ can be partitioned into
$\lceil |D|/B\rceil\le2|D|/B$ sets of consecutive vertices,
each with at most $B$ vertices.
The partitions of all remaining cycles therefore have
at most $2|U|/B$ parts in total.

Each path in $\mathcal P$ determines an ordered pair $(I,J)$,
where $I$ is the part of the partition of $V(C)$ containing
its endpoint on $C$, and $J$ is the part of the partition
of a remaining cycle containing its other endpoint.
The number of possible pairs is at most
\[
 \frac{2|V(C)\cap S|}{B}\cdot\frac{2|U|}{B}
 \le\frac49|\mathcal P|<|\mathcal P|.
\]
Thus two distinct paths $P_1,P_2\in\mathcal P$ determine
the same pair $(I,J)$.
Let $D$ be the remaining cycle containing $J$.
The endpoints of $P_1$ and $P_2$ on $C$ bound a subpath
within $I$ containing at most $B$ vertices of $S$.
Their endpoints on $D$ bound a subpath within $J$
containing at most $B$ vertices.
The complementary subpaths of $C$ and $D$, together with
$P_1$ and $P_2$, form the required cycle.
\end{proof}

Suppose that $U\ne\emptyset$.
We now prove that there are at least $|V(C)\cap S|/\beta_{2\ell}$
disjoint $(C,U)$-paths in $G$.
Choose an inclusion-maximal subset of $V(C)\cap S$
whose distinct vertices have distance greater than
$2\ell$ in $G$.
The balls of radius $2\ell$ centered at the chosen vertices
cover $V(C)\cap S$, so there are at least
$|V(C)\cap S|/\beta_{2\ell}$ chosen vertices.
Every remaining cycle meets all $q$ orbits in $S$.
Thus, for each chosen vertex $v$, the set $U$ contains
a vertex in the same orbit as $v$, and a shortest path from
$v$ to $U$ has length at most $\ell$.
These paths lie in the pairwise disjoint balls $B_G(v,\ell)$.
Let $\mathcal P$ be obtained as follows.
For each chosen vertex $v$, consider the shortest $(v,U)$-path
chosen for $v$, with its vertices ordered from $v$ to its
endpoint in $U$.
Include in $\mathcal P$ the subpath from the last vertex
of $C$ in this order to that endpoint in $U$.
Then the paths in $\mathcal P$ are pairwise disjoint and have all internal vertices
outside $V(C)\cup U$. Then $|\mathcal P|\ge |V(C)\cap S|/\beta_{2\ell}$.

Since $|U|\le qm$, the integer $B$ in the claim satisfies
\[
 B\le\left\lceil3\sqrt{qm\beta_{2\ell}}\right\rceil,
 \qquad
 2B\le8\sqrt{qm\beta_{2\ell}}<q,
\]
where the last inequality follows from the positivity of
the asserted lower bound.
The claim therefore gives a remaining cycle $D$ and a new
cycle containing at least
\[
 |V(C)\cap S|+|D|-8\sqrt{qm\beta_{2\ell}}
 >|V(C)\cap S|
\]
vertices of $S$, since $|D|\ge q$.
Replacing $C$ by this cycle and removing $V(D)$ from $U$
preserves the two properties required of $C$.
We repeat until $U$ is empty.
The cycles constructed from the matchings cover all $qm$
vertices of $S$, and there are at most $m$ of them.
Since each replacement loses at most
$8\sqrt{qm\beta_{2\ell}}$ vertices of $S$, the final cycle
contains at least $qm-8m\sqrt{qm\beta_{2\ell}}$ vertices,
as required.
\end{proof}

For a finite group $\Gamma$ and a symmetric generating set
$\mathcal A\subseteq\Gamma\setminus\{1\}$, the Cayley graph
has vertex set $\Gamma$ and edges $\{g,ga\}$ for
$g\in\Gamma$ and $a\in\mathcal A$.
We write
$D_{2N}=\langle r,f:r^N=f^2=1,\;frf=r^{-1}\rangle$
for the dihedral group of order $2N$.

\begin{lemma}\label{lem:dihedral-cycle}
Let $t,N$ be positive integers with $N>8t^2$, and let $Y$ be a
connected Cayley graph of $D_{2N}$ with generating set $\mathcal A$.
Suppose that either
\begin{enumerate}[label=\textup{(\roman*)}]
\item $r^a\in\mathcal A$ for some integer $a$ with $0<|a|\le t$; or
\item every member of $\mathcal A$ is a reflection, and
$r^bf,r^cf\in\mathcal A$ for some integers $b,c$ with
$0<|b-c|\le2t$.
\end{enumerate}
Then $c(Y)\ge2N-4t$.
\end{lemma}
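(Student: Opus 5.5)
The plan is to decompose $Y$ into a small number of long cycles using the short generator(s) supplied by (i) or (ii). Then I merge them one at a time along a spanning tree of the auxiliary "component graph", paying a bounded number of vertices per merge. Throughout, I use right multiplication, with edges $\{g,ga\}$.

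\emph{Case (i).} Put $\rho=r^a$ and $g=\gcd(a,N)\le|a|\le t$. The edges $\{x,x\rho\}$ split $D_{2N}$ into the $2g$ left cosets of $\langle\rho\rangle$. Each is a cycle of length $N/g\ge N/t>8t$. Let $W$ be the graph whose vertices are these cosets, with two cosets adjacent if some edge of $Y$ joins them. Since $Y$ is connected, so is $W$; fix a spanning tree of $W$ with $2g-1\le 2t-1$ edges. For a tree edge, pick a generator $s\in\mathcal A$ and a vertex $x$ with $x$ and $xs$ in the two cosets. The key identity is $x\rho s=xs\rho^{\pm1}$, with the sign $+$ if $s$ is a rotation and $-$ if $s$ is a reflection. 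Hence $x,x\rho$ are consecutive on one cycle and $xs,x\rho s$ are consecutive on the other. Deleting these two cycle edges and adding the rungs $\{x,xs\}$ and $\{x\rho,x\rho s\}$ merges the two cycles without losing any vertex.

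\emph{Case (ii).} Put $\sigma=r^bf$ and $\tau=r^cf$, so that $\sigma\tau=r^{b-c}$ and $g'=\gcd(b-c,N)\le 2t$. The subgraph formed by the $\sigma$- and $\tau$-edges is a disjoint union of $g'$ alternating cycles, each of length $2N/g'\ge N/t>8t$. Again the component graph is connected, and I merge along a spanning tree with at most $2t-1$ edges. Here the rung trick fails, because a reflection $s$ does not map consecutive vertices to consecutive vertices. Instead I use vertices at distance two. On one cycle the path $x,\ x\sigma,\ x\sigma\tau=xr^{b-c}$ has length two. Since every generator is a reflection, $xr^{b-c}s=xs\,r^{c-b}$, so $xs$ and $xr^{b-c}s$ are also at distance two on the other cycle, through the middle vertex $xs\tau$ (or $xs\sigma$). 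I delete the two middle vertices and add the edges $\{x,xs\}$ and $\{xr^{b-c},xr^{b-c}s\}$. This merges the two cycles at a cost of $2$ vertices. Over at most $2t-1$ merges the total loss is at most $4t$, giving $c(Y)\ge 2N-4t$.

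\emph{Main obstacle.} The delicate step is ensuring that the local modifications at different tree edges do not interfere. A single cycle may be incident to many tree edges, and a later merge must use segments that are still intact edges of the current (merged) cycle. I handle this by choosing the attachment positions greedily. Each cycle has length $>8t$, while the total number of segments already used (each of at most $3$ vertices) is less than $2\cdot 2t$. Moreover, for a fixed pair of adjacent components, translating $x$ along the cycle by powers of $\rho$ (respectively $r^{b-c}$) gives $N/g$ (respectively $N/g'$) candidate positions, which move in lockstep on both cycles. So a position disjoint from all previously used segments on both cycles always exists. With this, every merge is performed inside untouched segments, the result after all merges is a single cycle, and the vertex count gives the claim.
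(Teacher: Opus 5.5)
Your proof is correct and is essentially the paper's argument. It uses the same decompositions into the cosets of $\langle r^a\rangle$ and into the alternating $\sigma\tau$-cycles, the same rung-swap (case~(i)) and delete-two-middle-vertices (case~(ii)) gadgets, and the same greedy choice among translated positions that are pairwise edge-disjoint on each of the two cycles.

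The one step to tighten is the non-interference count. Require only that the chosen edge or two-edge segment is still intact, not vertex-disjoint from earlier segments, and count deleted edges rather than used segments, as the paper does: each deleted edge on the two cycles kills at most one candidate, and there are fewer than $4t$ such edges against more than $4t$ candidates. Your count of fewer than $4t$ used segments does not close the argument in case~(ii), because one used segment of the other parity meets two candidates.
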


\begin{proof}
In each case, we first partition $V(Y)$ into cycles
$D_1,\ldots,D_k$ for some $k\le2t$, and then join them
successively, omitting at most two vertices whenever
we include another cycle.
The cycle $C_j$ constructed from $D_1,\ldots,D_j$
will also satisfy $|E(D_i)\setminus E(C_j)|\le2(j-1)$ for every $i\le j$.
We count the available pairs of edges or subpaths as follows to ensure that the construction will continue.

In case~\textup{(i)}, let $g=\gcd(N,a)$.
The edges given by $r^a$ form $k=2g\le2t$ disjoint cycles
of length $N/g$ covering $V(Y)$.
Consider two of these cycles $C,D$ with an edge between
them, and fix a generator $h\in\mathcal A$ joining a
vertex of $C$ to a vertex of $D$.
Right multiplication by $h$ maps $C$ onto $D$ and
preserves their edges, since
$h^{-1}r^ah=r^a$ or $r^{-a}$.
For each edge $uv$ of $C$, replacing $uv$ and
$\{uh,vh\}$ by $\{u,uh\}$ and $\{v,vh\}$ produces
one cycle on $V(C)\cup V(D)$.
The $N/g$ edges of $C$ give $N/g$ such pairs, and each
edge of $C$ or $D$ occurs in exactly one pair.

In case~\textup{(ii)}, after replacing $f$ by one of the
two specified reflections, we may assume that
$f,r^af\in\mathcal A$ with $0<|a|\le2t$.
Write $x_i=r^i$ and $y_i=r^if$, with indices taken modulo
$N$, and let $g=\gcd(N,a)$.
For each residue class modulo $g$, the edges given by
$f$ and $r^af$ form a cycle on the vertices $x_i,y_i$
whose indices belong to that class.
These $k=g\le2t$ cycles partition $V(Y)$, and each has
length $2N/g$.

Suppose that an edge $x_i y_{i+b}$, given by a generator
$r^bf\in\mathcal A$, joins distinct initial cycles
$C,D$, with $x_i\in V(C)$.
The paths $x_i y_{i+a}x_{i+a}$ and
$y_{i+b}x_{i+b}y_{i+a+b}$ lie in $C$ and $D$,
respectively.
Their complementary subpaths, together with the edges
$x_i y_{i+b}$ and $x_{i+a}y_{i+a+b}$, form a single
cycle on $(V(C)\cup V(D))\setminus\{y_{i+a},x_{i+b}\}$.
As $i$ ranges over its residue class modulo $g$,
this gives $N/g$ pairs of subpaths of length two
in the same two cycles.
On either cycle, these subpaths are pairwise
edge-disjoint.

\medskip

We now construct $C_1,\ldots,C_k$.
By connectedness, we may rearrange the initial cycles so
that every $D_{j+1}$ has an edge to some $D_i$ with
$i\le j$.
Let $C_1=D_1$, and suppose that $C_j$ has been constructed
with $j<k$.
For a fixed generator joining $D_{j+1}$ to such a $D_i$,
the preceding construction gives $N/g$ pairs of edges
or subpaths, with no edge of $D_i$ belonging to more
than one pair.
Since at most $2(j-1)$ original edges of $D_i$ are absent
from $C_j$, at least $N/g-2(j-1)>0$ pairs have their
edge or subpath on $D_i$ contained in $C_j$.
Here we use $N/g\ge N/(2t)>4t$ and $j<k\le2t$.

The corresponding edge or subpath lies on $D_{j+1}$,
which is disjoint from $C_j$ and has not yet been used.
The same replacement therefore gives a cycle $C_{j+1}$
using only vertices of $D_1,\ldots,D_{j+1}$.
It deletes at most two original edges from each of
$D_i$ and $D_{j+1}$, and none from the other initial
cycles, so the required bound holds with $j+1$
in place of $j$.

After $k-1$ steps, we obtain $C_k$.
In case~\textup{(i)}, no vertices are omitted, so $C_k$
is a Hamilton cycle.
In case~\textup{(ii)}, each step omits two vertices,
giving $|C_k|\ge2N-2(k-1)\ge2N-4t$.
\end{proof}

The following result of DeVos and Mohar's
theorem supplies the partition of the vertex set
used to construct the quotient graph. The parts of the partition are
called \textit{blocks}.
For blocks $B_0,\ldots,B_{m-1}$ in cyclic order, the cyclic
distance between $B_i$ and $B_j$ is
$\min\{|i-j|,m-|i-j|\}$.
For $A\subseteq V(G)$, write
$\partial A=\{v\notin A:v\text{ has a neighbor in }A\}$,
and let $d_G(v,W)=\min_{w\in W}d_G(v,w)$ for nonempty $W$.

\begin{theorem}[DeVos--Mohar {\cite[Lemma~5.3]{DeVosMohar}}]\label{thm:DM}
Let $G$ be a finite connected vertex-transitive graph, let
$A\subseteq V(G)$, and set $k=|\partial A|$.
Suppose that $\operatorname{diam}(G)\ge31(k+1)^2$, and that
there are vertices $u\in A$ and $v\notin A\cup\partial A$ such that
\[
 d_G(u,V(G)\setminus A)\ge k+1,
 \qquad d_G(v,A\cup\partial A)\ge k+1.
\]
Then there are integers $s,t$ with $st\le k/2$ and a
partition of $V(G)$ into blocks of size $s$, with a cyclic order
in which every automorphism of $G$ permutes the blocks by a rotation
or a reflection, such that
\begin{enumerate}[label=\textup{(\roman*)}]
\item the endpoints of every edge belong to blocks at distance
at most $t$ in the cyclic order;
\item any two vertices in the same block have distance at most
$2st$ in $G$.
\end{enumerate}
\end{theorem}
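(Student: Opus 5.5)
The statement is quoted from DeVos and Mohar, so the plan is to rebuild its proof by a structure-theoretic argument on small separations. I would not try anything quantitatively sharper. Call a set $X\subseteq V(G)$ a \emph{$k$-cut} if $|\partial X|\le k$ and both $X$ and $V(G)\setminus(X\cup\partial X)$ contain a vertex at distance at least $k+1$ from the other side. By hypothesis $A$ is a $k$-cut, and so is every image $\phi(A)$ with $\phi\in\Aut(G)$.

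First, I would choose $A$ among all $k$-cuts to minimise $|\partial A|$, and then $|A|$. The key tool is submodularity of $|\partial(\cdot)|$ for pairs $X\cap Y$, $X\cup Y$ (and the complementary corners) of two crossing cuts. The depth conditions, together with $\operatorname{diam}(G)\ge31(k+1)^2$, should guarantee that at least one pair of opposite corners again consists of deep sets. Minimality then forces the translates $\{\phi(A)\}$ to be \emph{nested}, up to a bounded-width overlap of size at most about $k$.

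Second, from this nested family I would build an $\Aut(G)$-invariant cyclic structure. The deep "slabs" between consecutive translates are finite. The group acts transitively on them, and finiteness of $G$ rules out a tree-like structure with branching, because each vertex lies in exactly one slab's interior, up to boundaries of size at most $k$. This should leave a cyclic sequence of separators $\partial\phi_i(A)$.

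Third, I would define the blocks as the classes of vertices that are not separated by any separator in this sequence, refined so that every automorphism maps blocks to blocks. Transitivity makes all blocks equal in size, say $s$. Each separator has at most $k$ vertices, and I would argue it meets at most $2t$ consecutive blocks, which gives $st\le k/2$.

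Property (i) holds because an edge cannot jump over a whole separator, so its endpoints lie within $t$ steps. Property (ii) holds because two vertices in the same block can be joined by walking through at most $t$ consecutive layers, each of diameter $O(s)$. That walk has length at most $2st$. Finally, automorphisms preserve the cyclic sequence of separators, so each acts on the cyclic order of blocks as a rotation or a reflection.

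The main obstacle is the uncrossing step. It requires proving that the corner sets of two crossing translates inherit the depth condition, which is exactly where the quadratic diameter bound $31(k+1)^2$ enters. It also requires controlling how far a boundary of size at most $k$ can spread so as to get the precise constants $st\le k/2$ and $2st$. If the direct uncrossing fails, I would fall back on a Dunwoody-style argument: take the structure tree of a nested $\Aut(G)$-invariant family of $k$-cuts, and use finiteness and large diameter to show that this tree is a cycle.
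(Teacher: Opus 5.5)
Your uncrossing step fails, and the whole approach fails with it. The paper itself gives no proof of this statement: it quotes DeVos--Mohar and uses the result as a black box in Theorem~\ref{prop:early}, so your sketch has to stand on its own.

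You assert that for two crossing translates of a minimal deep cut some pair of opposite corners is again deep, so that minimality forces the translates $\phi(A)$ to be nested up to an overlap of about $k$ vertices. This is false precisely in the ring-like graphs the theorem describes. Take $G=C_n$ with $k=2$ and $n$ large. A minimal deep cut is $A=\{0,1,2,3,4\}$, with $\partial A=\{n-1,5\}$. Let $\phi$ be the rotation by one step. Then $A\cap\phi(A)=\{1,2,3,4\}$ has no vertex at distance at least $3$ from its complement, and $A\setminus\phi(A)=\{0\}$ and $\phi(A)\setminus A=\{5\}$ are single vertices. So no pair of opposite corners is deep, and $A,\phi(A)$ simply cross. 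In the $t$-th power of $C_n$, where $k=2t$, the minimal deep cut is an interval of $4t^2+1$ vertices. Hence $|A\cap\phi(A)|=4t^2=k^2$, and even your weakened ``overlap of size about $k$'' fails.

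More fundamentally, no $\Aut(G)$-invariant nested family of separations with two deep sides can exist in a finite vertex-transitive graph. Its structure tree would be a finite tree on which $\Aut(G)$ acts, so the centre of that tree (a node or an edge) is fixed. The corresponding part or separator is then a nonempty proper $\Aut(G)$-invariant subset of $V(G)$, contradicting transitivity. So your main line (nested translates, then slabs, then a cyclic sequence) is a dead end, not merely incomplete. So is your fallback, which asks you to show that a structure tree is a cycle; no tree is, and ``no branching'' gives a path.

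The missing idea is how to extract a cyclic structure from \emph{crossing} cuts. In a ring, $\partial A$ consists of two far-apart pieces, one at each end of $A$. What gets cyclically ordered are the translates of one such piece, a set that separates $G$ only locally. A proof must produce and control these local separators. This is where the depth $k+1$ and the bound $\operatorname{diam}(G)\ge31(k+1)^2$ have to do real work, for instance in showing that $\partial A$ splits into two distant parts and that translates of one part are consistently ordered. Nothing in your sketch does this.

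Your later steps also have independent problems. The derivation of $st\le k/2$ runs the wrong way. From ``a separator has at most $k$ vertices and meets at most $2t$ blocks'' no upper bound on $st$ follows. What you need is the lower bound $|\partial A|\ge2st$, that is, that each end of $\partial A$ contains at least $st$ vertices. That depends on choosing the blocks and $t$ appropriately. Similarly, the constant $2st$ in (ii) is justified only by ``layers of diameter $O(s)$'', which you never define.
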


We proceed to prove Theorem~\ref{prop:early}.
\begin{proof}[\normalfont\bfseries Proof of Theorem~\ref{prop:early}]
Choose an absolute integer $R_0$ sufficiently large, and let
$G,n,R$ satisfy the hypotheses. Fix a vertex $y$ of $G$.
Since $\beta_R\le R^{5/4}<n$, there is a vertex $z$ with
$d_G(y,z)\ge R+1$, and in particular
$\operatorname{diam}(G)>R$.

There are at least $R/5$ integers $r$ with
$R/4\le r\le R/2$, and the sum of
$\beta_{r+1}-\beta_r$ over these integers is at most
$\beta_R\le R^{5/4}$.
We may therefore choose such an $r$ so that
\[
 k=|\partial A|=\beta_{r+1}-\beta_r\le5R^{1/4}.
\]
We now apply Theorem~\ref{thm:DM} to the set $A:=B_G(y,r)$ with the vertices $u=y$ and $v=z$.
To see the hypotheses hold, we have $A\cup\partial A=B_G(y,r+1)$ and $d_G(y,z)\ge R+1$, so
\[
 d_G(y,V(G)\setminus A)\ge r+1\ge R/4,
 \qquad
 d_G(z,A\cup\partial A)\ge R-r\ge R/2.
\]
For sufficiently large $R_0$, we have
$R/4\ge5R^{1/4}+1\ge k+1$ and
$31(k+1)^2\le R<\operatorname{diam}(G)$.
Thus all the hypotheses of Theorem~\ref{thm:DM}
hold, and it then gives a partition into cyclically ordered blocks
of size $s$ that satisfies the two conclusions there with some $t\leq k/2s$.

Let $H$ consist of the automorphisms that fix every block.
Since every automorphism permutes the blocks, we have $H\lhd\Aut(G)$.
The $H$-orbits have a common size $m\le s$ and lie within
blocks, so their diameter in $G$ is at most $2st\le k$.
The quotient group $\Gamma=\Aut(G)/H$ acts transitively on
$G/H$ and can be identified, through its action on the blocks,
with a group of rotations and reflections of their cyclic order.

Let $\Lambda$ be the subgroup of $\Gamma$ consisting of rotations.
It is cyclic and has index at most two.
Every nonidentity rotation moves every block, and therefore
every vertex of $G/H$. Thus each $\Lambda$-orbit in $G/H$
has size $|\Lambda|$. Since $\Gamma$ acts transitively and
$[\Gamma:\Lambda]\le2$, there are at most two such orbits.
If there is exactly one orbit, the action of $\Lambda$ identifies $G/H$
with a connected Cayley graph of a cyclic group.
For sufficiently large $R_0$, we have
$|G/H|=n/m\ge2n/k\ge3$.
Using the result of Maru{\v{s}}i{\v{c}}~\cite{Marusic1983}
that every connected Cayley graph of a finite abelian group
of order at least three has a Hamilton cycle,
we obtain a Hamilton cycle in $G/H$.

Suppose that $\Lambda$ has two orbits in $G/H$.
Then $|G/H|=2|\Lambda|$, and transitivity forces $\Gamma$
to contain a reflection. Hence $\Gamma$ is a dihedral group
of order $2|\Lambda|$. Fix a vertex $v_0\in V(G/H)$.
The map $g\mapsto g(v_0)$ is a bijection from $\Gamma$
to $V(G/H)$, under which the action of $\Gamma$ becomes
left multiplication.
Thus $G/H$ is a connected Cayley graph of $\Gamma$
with a symmetric generating set $\mathcal A$.
We check the conditions of Lemma~\ref{lem:dihedral-cycle}
using the cyclic structure of the blocks.
Choose a generator $\rho$ of $\Lambda$ whose action on the
blocks is a rotation by the smallest positive number of positions in their cyclic order.
Let $B_0$ be the block containing the $H$-orbit $v_0$.
If $\mathcal A$ contains a rotation, it sends $B_0$
to a block at cyclic distance at most $t$ from $B_0$.
Writing this rotation as $\rho^a$ with $|a|$ minimal therefore
gives $0<|a|\le t$, as required in Lemma~\ref{lem:dihedral-cycle}~\textup{(i)}.
Otherwise, since $\mathcal A$ generates $\Gamma$, it contains
two distinct reflections $\sigma,\tau$.
Both $\sigma(B_0)$ and $\tau(B_0)$ are at cyclic distance at
most $t$ from $B_0$, and hence at distance at most $2t$ from
each other.
Thus $\tau\sigma=\rho^a$ for some $0<|a|\le2t$.
Taking $r=\rho$ and $f=\sigma$, the generators $f$ and
$r^af=\tau$ satisfy Lemma~\ref{lem:dihedral-cycle}~\textup{(ii)}.

Since $m,t\le k/2$ and $k\le5n^{1/5}$, we also have
\[
 |\Lambda|=\frac{n}{2m}\ge\frac nk>2k^2\ge8t^2
\]
for sufficiently large $R_0$.
Lemma~\ref{lem:dihedral-cycle} therefore gives
$c(G/H)\ge n/m-4t$.
The same inequality holds in the cyclic case.
Writing $q=c(G/H)$, we obtain
$qm\ge n-4mt\ge n-2k$.

We now apply Lemma~\ref{lem:normal-quantitative} with $\ell=k$.
Since each edge joins blocks at cyclic distance at most $t$,
each ball of radius $2k$ meets at most $4kt+1$ blocks. Thus
$\beta_{2k}\le s(4kt+1)=O(k^2)$ by $st\le k/2$.
Together with the inequalities $k\le5R^{1/4}<5n^{1/5}$, $m\le k/2$ and $qm\le n$, the lemma gives
\[
 c(G)\ge qm-8m\sqrt{qm\beta_{2k}}
 \ge n-O(k^2\sqrt n)
 \ge n-n^{19/20}.\qedhere
\]
\end{proof}

\subsection{Proof of Theorem~\ref{thm:main}}
It suffices to prove the theorem for sufficiently large $n$.
Let $R_0$ be as in Theorem~\ref{prop:early}, and let $G$ be a connected
vertex-transitive graph on $n$ vertices.
Write $d$ for its degree.
If $\beta_r\le r^{5/4}$ for some integer $R_0\le r<n^{4/5}$,
Theorem~\ref{prop:early} gives $c(G)\ge n-n^{19/20}$.
Since $d<n$, we have $n^{-1/20}\le d^{-1/100}$, giving
both required bounds for sufficiently large $n$.

Otherwise, fix $S\subseteq V(G)$, and order the components
$X_1,\ldots,X_t$ of $G-S$ by nonincreasing size.
We now verify~\eqref{eq:component-condition}.
For each integer $i\ge0$ with $2^i\le n/2$, take
$r=\max\{R_0,\lceil2^{4i/5}\rceil\}$.
For sufficiently large $n$, we have $R_0\le r<n^{4/5}$,
so $\beta_r>r^{5/4}\ge2^i$.
By Lemma~\ref{lem:ballcap},
\[
 \bigl|\{j\ge2:2^i\le|X_j|<2^{i+1}\}\bigr|
 \le\frac{(4r+3)|S|}{2^i}
 \le (4R_0+7)2^{-i/5}|S|.
\]
For $2^i\le|X_j|<2^{i+1}$, we have
$|X_j|^{1/8}\le2^{(i+1)/8}$, so
\[
 \sum_{j=2}^t|X_j|^{1/8}
 \le2^{1/8}(4R_0+7)|S|\sum_{i=0}^{\infty}2^{-3i/40}.
\]
The geometric series converges, so~\eqref{eq:component-condition}
holds with an absolute constant $K$.
Together with the first case, Theorem~\ref{thm:general}
therefore gives an absolute constant $c>0$ and an absolute
integer $d_0$ such that $G$ contains a cycle of length
at least $cn$ and, if $d\ge d_0$, a cycle of length
at least $(1-d^{-1/100})n$.\qed

\section{Concluding remarks}\label{sec:robust}
The condition \eqref{eq:component-condition} in
Theorem~\ref{thm:general} provides a unified approach to
constructing long cycles.
For vertex-transitive graphs, the dichotomy on the sizes of balls leads either to a verification of this condition
or to the structural argument in
Section~\ref{sec:vertex-transitive}.
For regular graphs satisfying~\eqref{eq:component-condition},
we can start with disjoint cycles covering almost all vertices, and the final cycle also covers almost all vertices as the degree tends to infinity.
It remains open whether every connected vertex-transitive graph on $n$ vertices admits a cycle of length $(1-o(1))n$.

We conclude with another application of Theorem~\ref{thm:general},
to regular robust sublinear expanders.
We recall the robust expansion condition introduced by Haslegrave, Kim
and Liu~\cite[Definition~3.1]{HKL}. For $\varepsilon_1,k>0$ and $x\ge k/2$,
let
\[
 \rho(x)=\frac{\varepsilon_1}{\log^2(15x/k)},
\]
where logarithms are natural. A $d$-regular graph $G$ is an
\emph{$(\varepsilon_1,k)$-robust expander} if, for every
$A\subseteq V(G)$ with $k/2\le |A|\le |G|/2$ and every
$F\subseteq E(G)$ with $|F|\le d\rho(|A|)|A|$, the external
neighbourhood of $A$ in $G-F$ has size at least $\rho(|A|)|A|$. Chen, Liu, Wei and Yang~\cite{CLWY} constructed regular sublinear expanders with small circumference. They asked whether there are fixed $\varepsilon_1,\varepsilon_2>0$ and a sequence of $n$-vertex $d$-regular $(\varepsilon_1,\varepsilon_2d)$-robust expanders, with $n\to\infty$, whose circumference is $o(n)$~\cite[Problem~5.4]{CLWY}. 

The following consequence of Theorem~\ref{thm:general} answers this question negatively.
For disjoint vertex sets $A,B$ in a graph $H$, let $e_H(A,B)$ denote the number
of edges in $H$ between $A$ and $B$.
\begin{theorem}\label{thm:robust}
For every $\varepsilon_1,\varepsilon_2>0$, there exist $c>0$ and an integer $n_0$ such that every $n$-vertex $d$-regular
$(\varepsilon_1,\varepsilon_2d)$-robust expander $G$ with $n\ge n_0$
and $d\ge1$ satisfies $c(G)\ge cn$.
\end{theorem}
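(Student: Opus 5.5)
The plan is to verify condition~\eqref{eq:component-condition} for $G$ with a constant $K$ depending only on $\varepsilon_1,\varepsilon_2$, and then apply the first conclusion of Theorem~\ref{thm:general}. Write $k=\varepsilon_2d$. Fix $S\subseteq V(G)$ and let $X_1,\ldots,X_t$ be the components of $G-S$ in nonincreasing order of size. We must bound $\sum_{j\ge2}|X_j|^{1/8}$ by $K|S|$. We split the components $X_j$ with $j\ge2$ into \emph{small} ones, with $|X_j|<k/2$, and \emph{large} ones, with $|X_j|\ge k/2$.

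For small components we use regularity only. A component $X$ with $|X|<k/2=\varepsilon_2 d/2$ has every vertex of degree $d$ and at most $|X|-1<d/2$ neighbours inside $X$ (for $\varepsilon_2\le1$; if $\varepsilon_2>1$ this is even easier). Hence each vertex of $X$ sends more than $d/2$ edges to $S$, so $e_G(X,S)\ge |X|d/2$. Summing over small components and using $e_G(V\setminus S,S)\le d|S|$ gives $\sum_{\text{small}}|X_j|\le 2|S|$. Since $|X_j|^{1/8}\le|X_j|$, the small components contribute at most $2|S|$.

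For large components we use robust expansion. The natural approach is dyadic grouping: let $\mathcal X_i$ be the large components $X_j$, $j\ge2$, with $2^i\le|X_j|<2^{i+1}$. Take $A$ to be a suitable union of members of $\mathcal X_i$, or a single component if it is large. Every $X_j$ with $j\ge2$ has size at most $n/2$, and a union of non-largest components can be chosen with size at most about $n/2$. We choose $F=\emptyset$. Then $N(A)\subseteq S$, so expansion gives $|N(A)\cap S|\ge\rho(|A|)|A|$.

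The subtlety is that neighbourhoods of different components can overlap in $S$. We therefore apply expansion to the whole union $A=\bigcup\mathcal X_i$, splitting it into pieces of size at most $n/2$ if needed. This yields $|S|\ge\rho(2^{i+1}|\mathcal X_i|)\,2^i|\mathcal X_i|$, so
\[
 |\mathcal X_i|\,2^{i/8}\lesssim \frac{|S|\,2^{-7i/8}}{\rho(2^{i+1}|\mathcal X_i|)}
 =O\Bigl(\varepsilon_1^{-1}|S|\,2^{-7i/8}\log^2\bigl(n/k\bigr)\Bigr).
\]
Here the logarithm involves the size of the whole union rather than $2^i$, and this is the main obstacle. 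To fix it, I would apply expansion not to the union but to each component separately, and handle overlaps using the robustness with respect to edge deletions. Let $F$ be the set of edges from $X_j$ to vertices of $S$ that have many neighbours in $S$-incident components, or, more simply, use edge counts. The robust condition with $F$ consisting of all but a few edges into $S$ gives that $X_j$ has at least $\rho(|X_j|)|X_j|$ neighbours in $S$ even after deleting up to $d\rho(|X_j|)|X_j|$ edges. By deleting, for each $X_j$, the edges to vertices $v\in S$ that receive more than a threshold number of edges from $X_j$, we obtain about $\rho(|X_j|)|X_j|$ edges from $X_j$ to $S$ that are spread out. Then we double count: $\sum_j e(X_j,S)\le d|S|$. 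Concretely, robustness forces $e(X_j,S)\ge d\rho(|X_j|)|X_j|$, because otherwise we could take $F=E(X_j,S)$ and get an empty external neighbourhood. Summing gives
\[
 \sum_{\text{large}}\rho(|X_j|)|X_j|\le|S|.
\]
Finally, $|X_j|^{1/8}\le C\rho(|X_j|)|X_j|$ for all $|X_j|\ge k/2$ with $C=C(\varepsilon_1)$, because $\log^2(15x/k)/x^{7/8}$ is bounded uniformly in $k$: indeed $x\ge k/2$ and $x^{7/8}\ge (k/2)^{7/8}\cdot(x/k\cdot 2)^{7/8}$ dominates $\log^2(15x/k)$ up to a constant. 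So the large components contribute at most $C|S|$, and $K=2+C$ works. Theorem~\ref{thm:general} then yields $c(G)\ge cn$ for $n\ge n_0$.
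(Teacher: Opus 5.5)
Your large-component step is correct. If $e_G(X_j,S)\le d\rho(|X_j|)|X_j|$, then $F=E_G(X_j,S)$ is an admissible deletion that leaves $X_j$ with no external neighbours, so $e_G(X_j,S)>d\rho(|X_j|)|X_j|$. Summing against $e_G(V(G)\setminus S,S)\le d|S|$ then works. Two small corrections: your constant depends on $\varepsilon_2$ as well as $\varepsilon_1$, since $k=\varepsilon_2 d$ can be below $1$, and the dyadic discussion before this step is unnecessary. Together with your small-component count, this proves the theorem when $\varepsilon_2\le1$, more directly than the paper does.

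\textbf{The gap is the case $\varepsilon_2>1$.} You call it ``even easier'', but it is the opposite. A component of size below $k/2=\varepsilon_2 d/2$ may then have more than $d$ vertices, so nothing forces its vertices to send edges to $S$. Robust expansion says nothing about sets smaller than $k/2$. In fact~\eqref{eq:component-condition} can fail for $G$ itself. For $\varepsilon_2=3$ and $d\ge3$, the graph $K_{d+1}\sqcup K_{d+1}$ is vacuously an $(\varepsilon_1,3d)$-robust expander, because no set satisfies $3d/2\le|A|\le n/2=d+1$. Yet it is disconnected, so~\eqref{eq:component-condition} fails at $S=\emptyset$. This is not only a small-$n$ phenomenon. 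The hypothesis only constrains sets of size at least $k/2$, so it does not rule out, for example with $\varepsilon_2=10$ and $\varepsilon_1$ small, a copy of $K_{d+1}$ minus an edge spliced into a strong $d$-regular expander by two edges. Deleting the two attachment vertices then violates~\eqref{eq:component-condition} once $d$ is large. So Theorem~\ref{thm:general} cannot be applied to $G$ directly.

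\textbf{The missing idea is to discard such pieces before invoking Theorem~\ref{thm:general}.} The paper handles $n\le2k$ separately: a longest path gives $c(G)\ge d+1\ge n/(2\varepsilon_2)$. For $n>2k$ it fixes $\eta>0$ with $\rho(x)x\ge\eta(k+x^{1/8})$ for $x\ge k/2$. It then takes $R$ with $|R|\le k$ minimizing $e_G(R,V(G)\setminus R)-\eta d|R|$, and robust expansion forces $|R|<k/2$. In $H=G-R$, comparing $R$ with $R\cup A$ gives $e_H(A,V(H)\setminus A)\ge\eta d|A|$ when $1\le|A|\le k/2$. For $k/2<|A|\le|H|/2$, the $\eta dk$ term absorbs the at most $\eta d|R|<\eta dk/2$ edges from $A$ to $R$. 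Hence $e_H(A,V(H)\setminus A)\ge\eta d|A|^{1/8}$ for every $A$ with $|A|\le|H|/2$. Your double count then verifies~\eqref{eq:component-condition} in $H$ with $K=\eta^{-1}$, giving $c(G)\ge c(H)>3c_0n/4$.
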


\begin{proof}
Fix $\varepsilon_1,\varepsilon_2>0$, and let $G$ be an $n$-vertex
$d$-regular $(\varepsilon_1,\varepsilon_2d)$-robust expander, where
$d\ge1$ and $n$ is sufficiently large in terms of
$\varepsilon_1$ and $\varepsilon_2$.
Let $k=\varepsilon_2d$.
If $n\le2k$, then $d\ge2$ for sufficiently large $n$.
An endpoint of a longest path has all its $d$ neighbours on that
path. Its farthest neighbour along the path gives a cycle of
length at least $d+1$, so $c(G)\ge d+1\ge n/(2\varepsilon_2)$.

We then assume that $n>2k$. Choose $0<\eta\le1$, depending only on
$\varepsilon_1,\varepsilon_2$, so that
\begin{equation}\label{eq:rho-lower-bound}
 \rho(x)x\ge\eta\bigl(k+x^{1/8}\bigr)
 \qquad(x\ge k/2).
\end{equation}
Here $\eta$ can be chosen independently of $d$. Indeed, writing $x=ky$ with $y\ge1/2$, we have
\[
 \frac{\rho(x)x}{k+x^{1/8}}
 \ge \frac{\varepsilon_1y}
 {\log^2(15y)(1+\varepsilon_2^{-7/8}y^{1/8})},
\]
since $k=\varepsilon_2d\ge\varepsilon_2$.
The minimum of the right-hand side over $y\ge1/2$ is positive
and depends only on $\varepsilon_1$ and $\varepsilon_2$.
Therefore, any $0<\eta\le1$ smaller than this minimum
satisfies~\eqref{eq:rho-lower-bound}.

The robust expansion property, together
with~\eqref{eq:rho-lower-bound}, gives
\begin{equation}\label{eq:robust-edge-cut}
 e_G(A,V(G)\setminus A)>d\rho(|A|)|A|
 \ge\eta d\bigl(k+|A|^{1/8}\bigr)
 \qquad(k/2\le |A|\le n/2).
\end{equation}
We next find an induced subgraph $H$ of $G$ with $|H|>3n/4$ such that
$e_H(A,V(H)\setminus A)\ge\eta d|A|^{1/8}$ whenever
$1\le|A|\le|H|/2$.
Choose $R\subseteq V(G)$ minimizing
$e_G(R,V(G)\setminus R)-\eta d|R|$ subject to $|R|\le k$.
Comparison with $R=\emptyset$ gives
$e_G(R,V(G)\setminus R)\le\eta d|R|$.
If $k/2\le|R|\le k$, then~\eqref{eq:robust-edge-cut} would imply $e_G(R,V(G)\setminus R)>\eta d k\ge\eta d|R|$, a contradiction.
Thus $|R|<k/2$, and $H:=G-R$ has more than $3n/4$ vertices because $k<n/2$.

For a vertex set $A\subseteq V(H)$ with $1\le|A|\le k/2$, since $|R|<k/2$, we may compare $R$ with $R\cup A$ in the
minimization. Adding $A$ to $R$ removes $e_G(A,R)$ edges from
the boundary and adds $e_H(A,V(H)\setminus A)$ edges to it.
Minimality of $R$ therefore gives
\[
 e_H(A,V(H)\setminus A)
 \ge e_G(A,R)+\eta d|A|
 \ge\eta d|A|^{1/8}.
\]
For $A\subseteq V(H)$ with $k/2<|A|\le|H|/2$,
Since $H=G-R$ and
$e_G(A,R)\le e_G(R,V(G)\setminus R)\le\eta d|R|$,
we obtain from~\eqref{eq:robust-edge-cut} that
\[
\begin{aligned}
 e_H(A,V(H)\setminus A)=e_G(A,V(G)\setminus A)-e_G(A,R)>\eta d\bigl(k+|A|^{1/8}\bigr)-\eta d|R|\ge\eta d|A|^{1/8},
\end{aligned}
\]
where the last inequality uses $|R|<k/2$.
Therefore, $e_H(A,V(H)\setminus A)
 \ge\eta d|A|^{1/8}$ holds for every $A\subseteq V(H)$ of size at most $|H|/2$.

We now verify the condition~\eqref{eq:component-condition}.
For $S\subseteq V(H)$, order the components $X_1,X_2,\ldots$
of $H-S$ by nonincreasing size. Since $|X_j|\le|H|/2$ for each $j\ge2$, we have
\[
 \sum_{j\ge2}|X_j|^{1/8}
 \le\frac{1}{\eta d}\sum_{j\ge2}e_H(X_j,S)
 \le \eta^{-1}|S|,
\]
where the second inequality holds because every vertex of $S$ has degree at most $d$ in $H$.
Applying Theorem~\ref{thm:general} with $K=\eta^{-1}$, we deduce
$c(G)\ge c(H)\ge c_0|H|>3c_0n/4$ for sufficiently large $n$,
where $c_0>0$ depends only on $\varepsilon_1,\varepsilon_2$.
Together with the case $n\le2k$, this proves the theorem with $c=\min\{3c_0/4,1/(2\varepsilon_2)\}$.
\end{proof}

\section*{AI Declaration}
ChatGPT (GPT-6) was used to assist with the exploration of
proof strategies, the formulation and testing of intermediate
claims, and the preparation of this manuscript.
The current project is a continuation of our earlier study of longest cycles
in vertex-transitive graphs~\cite{MaZhao}, where we used a structural theorem
of DeVos and Mohar~\cite{DeVosMohar2006} to investigate the intersection of
longest cycles in such graphs.
Earlier attempts with models in the GPT-5 series did not yield improvements
toward Lov\'asz's conjecture.
Since early September 2026, we resumed the exploration with GPT-6.
This exploration had led to a lower bound $c(G)=\Omega(n/(\log n)^4)$ for every connected vertex-transitive graph $G$ on $n$ vertices.
The distinction according to the sizes of balls at suitable
values of the radius emerged during these exchanges and was new to us.
When the size of a ball was bounded above by a suitable
polynomial in its radius,
we used a structure theorem of Tessera and
Tointon~\cite{TesseraTointon}. Our approach in this case was to
obtain a quotient graph with restricted algebraic structure,
find a long cycle in that quotient graph, and lift it to the original graph.
In the complementary case, inspired by the cycle-intersection
pattern in~\cite[Lemma~2.8]{BHMSY}, we used the Lov\'asz local lemma
to choose paths whose union contained a long cycle.
Iteratively refining this approach led to the bound
$n/(\log n)^{1+o(1)}$.
We then noticed the paper of Segal and
Verstra\"ete~\cite{SegalVerstraete2026} on tough graphs.
After reading it, we found it natural to explore its connection with our
approach in further discussions with GPT-6.
In the case where the sizes of balls have polynomial lower bounds,
an adaptation of their argument replaced our earlier use of the
Lov\'asz local lemma and yielded a linear bound.
We also followed their application of Mader's theorem and other results on
disjoint paths. These tools eventually led to Theorem~\ref{thm:general},
giving a general criterion for finding long cycles.
For the case governed by a polynomial upper bound,
we retained the strategy of finding a long cycle in a
suitable quotient graph and lifting it to the original graph.
We also managed to replace the Tessera--Tointon input with the graph-theoretic result stated as Theorem~\ref{thm:DM},
from the structural work of DeVos and
Mohar~\cite{DeVosMohar}.
For regular graphs, we found that starting from a family of disjoint cycles
covering almost all vertices then yields a cycle of length $(1-o_d(1))n$
as the degree $d$ tends to infinity.
Combining all these ideas led to the present results and proofs.

The authors directed this process, independently checked
the proofs, and take full responsibility for the
mathematical content and presentation.

\begingroup
\renewcommand{\bibfont}{\small}
\setlength{\bibsep}{2pt}

\endgroup
\appendix

\section{Interpolating condition~\eqref{eq:component-condition} between toughness and expansion}
\label{app:toughness-expansion}

We justify the comparisons with toughness and vertex
expansion stated after Theorem~\ref{thm:general}.
Let $G$ be a graph on $n\ge3$ vertices, let $h$ be a
positive function on the positive integers, and let $K\ge0$.
We consider condition~\eqref{eq:weighted-components},
which requires
\[
 \sum_{j=2}^t h(|X_j|)\le K|S|
\]
for every $S\subseteq V(G)$, where $X_1,\ldots,X_t$ are
the components of $G-S$ in nonincreasing order of size.
The sum is zero when $G-S$ has at most one component.
Taking $S=\emptyset$ shows that $G$ is connected. Moreover, when $K=0$, the condition holds precisely when $G$ is complete.

Recall that a graph is \emph{$\tau$-tough}, where $\tau>0$,
if $|S|\ge\tau t$ whenever $G-S$ has $t\ge2$
components (see~\cite{Chvatal}).
Suppose that $K>0$ and $h\equiv1$.
For every such set $S$, condition~\eqref{eq:weighted-components}
gives $t-1\le K|S|$.
Since $t\ge2$ and $|S|\ge1$, we have
\[
 t\le2(t-1)\le2K|S|,
 \qquad
 t\le K|S|+1\le(K+1)|S|.
\]
Thus $G$ is $\max\{1/(2K),1/(K+1)\}$-tough.
Conversely, if $G$ is $\tau$-tough, then
$t-1\le t\le |S|/\tau$ whenever $t\ge2$.
Hence condition~\eqref{eq:weighted-components} holds with
$h\equiv1$ and $K=1/\tau$.
Thus, for $K>0$, condition~\eqref{eq:weighted-components}
with $h\equiv1$ is equivalent to $1/K$-toughness up to
a factor of two in the parameter.

We say that $G$ has vertex expansion at least
$\varepsilon>0$ if every set $A\subseteq V(G)$ with
$1\le|A|\le n/2$ has at least $\varepsilon|A|$ neighbors
outside $A$.
We show that condition~\eqref{eq:weighted-components}
with $h(x)=x$ implies vertex expansion at least
$1/(K+1)$.
For such a set $A$, let $S$ be the set of its neighbors
outside $A$, and let $B=V(G)\setminus(A\cup S)$.
Every component of $G-S$ lies entirely in $A$ or in $B$.
Since $|B|=n-|A|-|S|\ge|A|-|S|$, we have
\[
 |A|-|S|
 \le\min\{|A|,|B|\}
 \le\sum_{j=2}^t|X_j|
 \le K|S|.
\]
Consequently, $|S|\ge|A|/(K+1)$, so $G$ has vertex expansion at least $1/(K+1)$.

Conversely, suppose that $G$ has vertex expansion at least
$\varepsilon$.
We prove condition~\eqref{eq:weighted-components} with $h(x)=x$
and $K=2/\varepsilon$.
Fix $S$ such that $G-S$ has at least two components.
Partition these components into two nonempty families
whose unions $A$ and $B$ minimize the difference between
their sizes, with $|A|\le|B|$.
Then $|B|-|A|\le|X_1|$: otherwise, moving any component
from $B$ to $A$ would reduce this difference.
Therefore,
\[
 \frac12\sum_{j=2}^t|X_j|
 =\frac{n-|S|-|X_1|}{2}
 \le |A|.
\]
Every neighbor of $A$ outside $A$ belongs to $S$,
so $\varepsilon|A|\le|S|$.
Hence $\sum_{j=2}^t|X_j|\le2|S|/\varepsilon$, as required.
In particular, for $0<\varepsilon\le1$, these two
implications compare the expansion parameter with
$1/(K+1)$ up to absolute constant factors.
\end{document}